\newtheorem{prop}{Proposition}[section]
\newtheorem{thm}[prop]{Theorem}
\newtheorem{lemma}[prop]{Lemma}
\theoremstyle{remark}
\newtheorem{rmk}[prop]{Remark}
\newcommand{\sC}{\mathsf{C}}
\newcommand{\dom}{\mathsf{D}}
\newcommand{\E}{\mathop{{}\mathbb{E}}\nolimits}
\newcommand{\cF}{\mathscr{F}}
\newcommand{\cL}{\mathscr{L}}
\renewcommand{\P}{\mathbb{P}}
\newcommand{\erre}{\mathbb{R}}
\numberwithin{equation}{section}
\DeclarePairedDelimiter\abs{\lvert}{\rvert}
\DeclarePairedDelimiter\norm{\lVert}{\rVert}
\DeclarePairedDelimiterX\ip[2]{\langle}{\rangle}{#1,#2}
\newcommand{\embed}{\hookrightarrow}
\begin{document}
\title{Singular perturbations and asymptotic expansions for SPDEs with
  an application to term structure models}

\author{Sergio Albeverio\thanks{Institut f\"ur Angewandte Mathematik,
    Universit\"at Bonn, Endenicher Allee 60, D-53115 Bonn,
    Germany. E-mail: \texttt{albeverio@iam.uni-bonn.de}}%
  \and Carlo Marinelli\thanks{Department of Mathematics, University
    College London, Gower Street, London WC1E 6BT, UK. URL:
    \texttt{http://goo.gl/4GKJP}}%
  \and Elisa Mastrogiacomo\thanks{Dipartimento di Economia,
    Universit\`a degli Studi dell'Insubria, via Montegeneroso, 71,
    21100 Varese, Italy. E-mail:
    \texttt{elisa.mastrogiacomo@uninsubria.it}}}

\date{\normalsize December 28, 2020}

\maketitle

\begin{abstract}
  We study the dependence of mild solutions to linear stochastic
  evolution equations on Hilbert space driven by Wiener noise, with
  drift having linear part of the type $A+\varepsilon G$, on the
  parameter $\varepsilon$. In particular, we study the limit and the
  asymptotic expansions in powers of $\varepsilon$ of these solutions,
  as well as of functionals thereof, as $\varepsilon \to 0$, with good
  control on the remainder.  These convergence and series expansion
  results are then applied to a parabolic perturbation of the Musiela
  SPDE of mathematical finance modeling the dynamics of forward rates.
  \medskip\par\noindent \textsl{AMS subject classification (2020):}
  60H15, 60H30, 35R60, 91G30; 46N30, 47A55, 47N30; 
  \medskip\par\noindent
  \textsl{Keywords:} singular perturbations; asymptotic expansions;
  stochastic PDE; interest rate models.
\end{abstract}


\section{Introduction}
Consider the family of stochastic evolution equations
\begin{equation}
\label{eq:ep}
du_\varepsilon = (A+\varepsilon G)u_\varepsilon\,dt + \alpha\,dt +
B\,dW, \qquad u_\varepsilon(0)=u_0,
\end{equation}
set in a Hilbert space $H$ and indexed by $\varepsilon>0$, where $A$
and $G$ are linear maximal dissipative operators on $H$ such that
$A+\varepsilon G$ is also maximal dissipative, $\alpha$ and $B$ are
coefficients satisfying suitable measurability, integrability and
regularity conditions, and $W$ is a cylindrical Wiener
process. Precise assumptions on the data of the problem are given
below. 

Our main goal is to obtain an expansion of the difference
$u_\varepsilon-u$ in a (finite) series in powers of $\varepsilon$ plus
a remainder term, where $u_\varepsilon$ and $u$ are the unique mild
solutions to \eqref{eq:ep} with $\varepsilon>0$ and $\varepsilon=0$,
respectively.
Results in this sense are obtained assuming that the semigroups
generated by $A$ and $G$ commute.  As a first step, we show that
$u_\varepsilon$ converges to $u$ as $\varepsilon \to 0$, also in the
case where $\alpha$ and $B$ are (random, time-dependent) Lipschitz
continuous functions of the unknown, in suitable norms implying the
convergence in probability uniformly on compact intervals in time. For
such convergence result to hold it is enough that the resolvent of
$A+\varepsilon G$ converges to the resolvent of $A$ as $\varepsilon
\to 0$ in the strong operator topology, without any commutativity
assumption. Sufficient conditions for the convergence of operators in
the strong resolvent sense have been largely studied (see, e.g.,
\cite{EnNa,Oka:sing} and references therein) and can be readily
applied to obtain convergence results for solutions to stochastic
evolution equations. On the other hand, expansions in power series of
$u_\varepsilon-u$ are considerably harder to obtain. In fact, it is
well known that solutions to singularly perturbed equations, also in
the simpler setting of deterministic ODEs, do \emph{not} admit series
expansions in the perturbation parameter. This phenomenon appears also
in the class of stochastic equations studied here, as it is quite
obvious. This is essentially the reason behind the commutation
assumption on the semigroups generated by $A$ and $G$, as well as on
the regularity conditions on the initial datum $u_0$ and on the
coefficients $\alpha$ and $B$ (see \S\ref{sec:exp} below, where
asymptotic expansion results are obtained also for functionals of
$u_\varepsilon$).

As an application of the abstract results, we consider a singularly
perturbed transport equation on $\erre$ where, roughly speaking, $A$
and $G$ are the first and second derivative, respectively. This
equation can be seen as a singular perturbation of an extension of
Musiela's SPDE from a weighted Sobolev space on $\erre_+$ to the
corresponding one on $\erre$. The motivation for considering this
problem comes from the interesting article \cite{Cont:TSIR}, where the
author argues that second-order parabolic SPDEs reproduce many
stylized statistical properties of forward curves. On the other hand,
if forward rates satisfy a Heath-Jarrow-Morton dynamics, the
differential operator in the drift of the corresponding SPDE must be
of first order. It is then natural to consider singular perturbations
of the (first-order) Musiela SPDE by second-order differential
operators and to look for conditions implying uniform convergence of
the ``perturbed'' forward rates, as well as of implied bond prices, to
the corresponding ``unperturbed'' forward rates and bond prices, as
well as a more precise description of the dependence of the pricing
error on the ``size'' of the perturbation. Results in this sense are
obtained in the form of asymptoptic expansions in $\varepsilon$ of the
solution $u_\varepsilon$ to a second-order perturbation of a suitable
extension of the Musiela SPDE, as well as of functionals thereof.

The rest of the text is organized as follows. In \S\ref{sec:prel} we
introduce some notation, we recall basic results from semigroup
theory, and we establish some inequalities and identities for classes
of stochastic convolutions. In \S\ref{sec:3} we show that a
commutation assumption between the semigroups generated by $A$ and $G$
implies that the closure of $A+\varepsilon G$ converges to $A$ in the
strong resolvent sense as $\varepsilon \to 0$. This allows, thanks to
a general convergence result for mild solutions to stochastic
evolution equations, to deduce the convergence of $u_\varepsilon$ to
$u$ in a suitable norm. Under further regularity assumptions on $u_0$,
$\alpha$ and $B$, expansions of the difference $u_\varepsilon-u$ and
of functionals thereof as power series in $\varepsilon$ are obtained
in \S\ref{sec:exp}, which is the core of the work. Finally, the
applications described above to Musiela's SPDE are developed in
\S\ref{sec:Mus}.

\medskip

\noindent
\textbf{Acknowledgments.} The first-named author would like to thank
the the Economics Department of the Universit\`a dell'Insubria, Varese
and the Universit\`a degli Studi di Milano for warm hospitality.
Large part of the work for this paper has been carried out while the
second-named author was visiting the Interdisziplin\"ares Zentrum
f\"ur Komplexe Systeme at the University of Bonn.
The third-named author would like to thank the Institute of Applied
Mathematics and the Hausdorff Center for Mathematics at the University
of Bonn for warm hospitality.


\section{Preliminaries}
\label{sec:prel}
Throughout this section we shall use $E$ and $F$ to denote two Banach
spaces. The domain of a linear operator $L$ with graph in $E \times F$
will be denoted by $\dom(L)$. The Banach space of continuous
$k$-linear operators from $E^k$ to $F$, $k \in \mathbb{N}$, is denoted
by $\cL_k(E;F)$ (without subscript, as usual, if $k=1$). Given $h \in
E$ and $k \in \mathbb{N}$, we shall set $h^{\otimes k}=(h,\ldots,h)
\in E^k$.  If $E$ and $F$ are Hilbert spaces, $\cL^2(E;F)$ will stand
for the Hilbert space of Hilbert-Schmidt operators from $E$ to $F$. An
expression of the type $a \lesssim b$ means that there exists a
positive constant $N$ such that $a \leq Nb$, and $a \eqsim b$ stands
for $a \lesssim b$ and $b \lesssim a$.

We recall the following form of Taylor's formula (see, e.g.,
\cite[p.~349]{Lang:RFA}). Let $U \subseteq E$ be open,
$f \in C^m(U;F)$, $x \in U$ and $h \in E$ such that the segment
$[x,x+h]$ is contained in $U$. Then
\[
  f(x+h) = \sum_{k=0}^{m-1} \frac{1}{k!} D^kf(x)h^{\otimes k}
  + \int_0^1 \frac{(1-t)^{m-1}}{(m-1)!} D^mf(x+th)h^{\otimes m}\,dt.
\]

\medskip

For the purposes of this section only, we denote a strongly continuous
semigroup on a Hilbert space $H$ and its generator by $S$ and $A$,
respectively. As is well known, there exist $M \geq 1$ and
$w \in \erre$ such that $\norm{S(t)} \leq Me^{wt}$
for all $t \geq 0$.
Let $m \geq 1$ be an integer. If $\phi \in \dom(A^m)$, one has the
Taylor-like formula
\[
S(t)\phi = \sum_{k=0}^{m-1} \frac{t^k}{k!} A^k\phi
+ \frac{1}{(m-1)!} \int_0^t (t-u)^{m-1}S(u)A^m\phi\,du
\]
(see, e.g., \cite[Proposition~1.1.6]{BuBe}). We recall that $A^m$ is a
closed operator and that $\dom(A^m)$ is a Hilbert space with scalar
product
\[
\ip[\big]{\phi}{\psi}_{\dom(A^m)} = \ip[\big]{\phi}{\psi} +
\ip[\big]{A\phi}{A\psi} + \cdots + \ip[\big]{A^m\phi}{A^m\psi}.
\]

\smallskip

Let $T$ be a further strongly continuous semigroup on $H$. We shall
say that $S$ and $T$ commute if $S(t)T(t)=T(t)S(t)$ for all
$t \in \erre_+$. It is immediate that the product semigroup $ST$ is
strongly continuous. It also follows that $S(s)T(t)=T(t)S(s)$ for all
$t,s \geq 0$: first one proves it for rational $s$ and $t$, hence the
general case follows by density and continuity. For details see, e.g.,
\cite[p.~44]{EnNa}. Moreover $T$ leaves invariant $\dom(A)$: in fact,
for any $f \in \dom(A)$, one has
\[
  \lim_{h \to 0} \frac{S(h)T(t)f - T(t)f}{h} =
  T(t) \Bigl( \lim_{h \to 0} \frac{S(h)f - f}{h} \Bigr) = T(t)Af.
\]
This also implies, by uniqueness of the limit, that
$T(t)Af=AT(t)f$. These observations in turn imply that the resolvent
$R_\lambda$ of the generator of $T$ commutes with $A$, in the sense
that, if $f \in \dom(A)$, then $R_\lambda f \in \dom(A)$ and
$R_\lambda Af = AR_\lambda f$ (cf.~\cite[p.~171]{Kato}).

\medskip

All stochastic elements will be defined on a fixed probability space
$(\Omega,\cF,\P)$, endowed with a filtration $(\cF_t)_{t \in [0,T]}$,
with $T$ a fixed positive number, that is assumed to satisfy the
so-called usual assumptions. All equalities and inequalities between
random variables are meant to hold outside a set of probability
zero. We shall denote by $W$ a cylindrical Wiener process on a real
separable Hilbert space $U$.
We shall denote the closed subspace of $L^p(\Omega;C([0,T];H))$,
$p>0$, of $H$-valued adapted continuous processes by $\sC^p$, which is
hence a quasi-Banach space itself (with the induced quasi-norm).
Given a progressively measurable process
$C \in L^0(\Omega;L^2(0,T;\cL^2(U;H)))$, the stochastic convolution
$S \diamond C$ is the $H$-valued process defined by
\[
S \diamond C(t) := \int_0^t S(t-s)C(s)\,dW(s) \qquad \forall t \in [0,T],
\]
provided that the stochastic integral exists.  Similarly, if
$f \in L^0(\Omega;L^1(0,T;H))$, we shall define the $H$-valued process
$S \ast f$ by
\[
S \ast f(t) := \int_0^t S(t-s)f(s)\,ds \qquad \forall t \in [0,T].
\]
The stochastic integral of a process $F$ with respect to $W$ will be
occasionally denoted by $F \cdot W$ for typographical convenience.
\begin{lemma}
  \label{lm:XX}
  Let $p>0$, $C \in L^p(\Omega;L^2(0,T;\cL^2(U;H)))$ be a
  progressively measurable process, and $n \geq 0$. One has
  \[
  \E\norm[\bigg]{\int_0^t (t-s)^n S(t-s) C(s)\,dW(s)}^p
  \lesssim M^p \E\biggl( \int_0^t (t-s)^{2n} e^{2w(t-s)}
  \norm[\big]{C(s)}^2_{\cL^2(U;H)}\,ds
  \biggr)^{p/2}.
  \]
\end{lemma}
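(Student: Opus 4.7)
The plan is to reduce the estimate to the Burkholder--Davis--Gundy (BDG) inequality for Hilbert space-valued stochastic integrals, and then to pull the semigroup norm out of the square function using $\norm{S(t)} \leq Me^{wt}$. Concretely, I would set
\[
  \Phi(s) := (t-s)^n S(t-s) C(s), \qquad s \in [0,t],
\]
which is progressively measurable and takes values in $\cL^2(U;H)$ because the Hilbert--Schmidt operators form a two-sided ideal inside the bounded operators and $(t-s)^n S(t-s)$ is bounded.

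The first step is the pointwise-in-$(\omega,s)$ operator bound
\[
  \norm{\Phi(s)}_{\cL^2(U;H)}
    \leq (t-s)^n \norm{S(t-s)}_{\cL(H)} \norm{C(s)}_{\cL^2(U;H)}
    \leq (t-s)^n M e^{w(t-s)} \norm{C(s)}_{\cL^2(U;H)},
\]
which isolates the $M$ and $e^{w(t-s)}$ factors that appear in the statement. This step uses only the ideal property of $\cL^2(U;H)$ and the standard growth bound recalled earlier in the section.

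The second step is to invoke the BDG inequality in the form
\[
  \E \norm[\bigg]{\int_0^t \Psi(s)\,dW(s)}^p
  \lesssim \E \biggl( \int_0^t \norm{\Psi(s)}_{\cL^2(U;H)}^2\,ds \biggr)^{p/2},
\]
valid for every progressively measurable $\Psi$ in the appropriate class and for every $p>0$. Applied to $\Psi = \Phi$ and combined with the pointwise bound above, this yields the claimed inequality after factoring $M^p$ out of the $p/2$-th power.

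I do not expect a real obstacle: the argument is essentially ``BDG plus operator norm bound''. The only point requiring some care is the full range $p>0$ asserted in the statement. For $p\geq 1$ (in particular $p=2$, where it is the Itô isometry), BDG is classical in the Hilbert-valued setting; for $p\in(0,1)$ it follows from the $p=2$ version by Lenglart's domination inequality. In a write-up I would simply cite a standard reference for the Hilbert space BDG inequality covering all $p>0$ rather than reprove it here.
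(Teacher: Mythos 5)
Your proof is correct, and its core coincides with the paper's: the ideal property of Hilbert--Schmidt operators plus the growth bound $\norm{S(r)}\leq Me^{wr}$ to majorize the integrand, followed by the Burkholder--Davis--Gundy inequality for all $p>0$. Where you differ is in execution. Because the upper limit $t$ appears inside the integrand, the paper does not apply BDG directly at time $t$: it first bounds the $L^p(\Omega;H)$-norm by a supremum over shifted evaluation points $t_0\in[t,t+\delta]$, observes that for each fixed $t_0$ the process $\bigl((t_0-\cdot)^n S(t_0-\cdot)C\bigr)\cdot W$ is a local martingale, applies BDG together with the monotone majorant $\phi_\delta$, and finally lets $\delta\to 0$. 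Your observation that this detour is dispensable here is sound: for fixed $t$, freezing the integrand at $t_0=t$ already yields a continuous local martingale $\sigma\mapsto\int_0^\sigma (t-s)^n S(t-s)C(s)\,dW(s)$ on $[0,t]$ (the integrand is progressively measurable and square-integrable in $\cL^2(U;H)$; note that $r\mapsto S(r)K$ is even continuous in Hilbert--Schmidt norm for Hilbert--Schmidt $K$, by dominated convergence over an orthonormal basis), so BDG at the terminal time $\sigma=t$ gives the claim in one stroke, with $M^p$ factored out of the square function exactly as you describe. What the paper's sup-and-limit device buys is a reusable template: essentially the same argument is deployed again in the estimate of $R^3_{m,\varepsilon}$, where the integrand $C(t,\cdot)$ depends on $t$ in a more complicated, non-convolution way; your streamlined version buys brevity and transparency in the present lemma. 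Your treatment of the full range $p>0$ (classical Hilbert-valued BDG for $p\geq 1$, Lenglart domination for $p\in(0,1)$) is adequate; the reference the paper cites covers all $p>0$ for continuous Hilbert-valued local martingales directly.
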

\begin{proof}
  For any $\delta>0$, one has
  \[
  \norm*{\int_0^t (t-s)^n S(t-s)B(s)\,dW(s)}_{L^p(\Omega;H)} \leq
  \sup_{t_0 \in [t,t+\delta]}%
  \norm*{\int_0^t (t_0-s)^n S(t_0-s) B(s)\,dW(s)}_{L^p(\Omega;H)}.
  \]
  Since $\bigl((t_0-\cdot)S(t_0-\cdot)B\bigr) \cdot W$ is a local
  martingale, the Burkholder-Davis-Gundy inequality (see, e.g.,
  \cite{cm:Expo16}) and the ideal property of Hilbert-Schmidt
  operators 
  yield
  \begin{align*}
  &\sup_{t_0 \in [t,t+\delta]}%
  \norm*{\int_0^t (t_0-s)^n S(t_0-s) C(s)\,dW(s)}_{L^p(\Omega;H)}\\
  &\hspace{3em} \lesssim M \sup_{t_0 \in [t,t+\delta]}%
  \norm[\Big]{(t_0-\cdot)^n e^{w(t_0-\cdot)}
  \norm[\big]{C}_{\cL^2(U;H)}}_{L^p(\Omega;L^2(0,t;H)))}.
  \end{align*}
  Setting
  \[
  \phi_\delta(t) =
  \begin{cases}
    e^{w(t+\delta)},  &\text{if $w \geq 0$},\\
    e^{wt},  &\text{if $w < 0$},
  \end{cases}
  \]
  one has $e^{w(t_0-s)} \leq \phi_\delta(t-s)$ for all $t_0 \in
  [t,t+\delta]$ and $s \in [0,t]$, hence
  \begin{align*}
  &\sup_{t_0 \in [t,t+\delta]}%
  \norm*{\int_0^t (t_0-s)^n S(t_0-s) C(s)\,dW(s)}_{L^p(\Omega;H)}\\
  &\hspace{3em} \leq M \norm[\Big]{(t+\delta-\cdot)^n \phi_\delta(t-\cdot)
  \norm[\big]{C}_{\cL^2(U;H)}}_{L^p(\Omega;L^2(0,t;H)))},
  \end{align*}
  therefore
  \[
  \E\norm[\bigg]{\int_0^t (t-s)^n S(t-s) C(s)\,dW(s)}^p
  \lesssim M^p \E\biggl( \int_0^t (t+\delta-s)^{2n} \phi_\delta(t-s)
  \norm[\big]{C(s)}^2_{\cL^2(U;H)}\,ds \biggr)^{p/2}
  \]
  for all $\delta>0$. Taking the limit as $\delta \to 0$ proves the
  claim.
\end{proof}

The following recursive relation for certain nonlinear stochastic
convolutions will be very useful in the sequel.
\begin{lemma}
  \label{lm:desc}
  Let $C \in L^p(\Omega;L^2(0,T;\cL^2(U;H)))$ be a progressively
  measurable process and define, for every $k \in \erre_+$,
  \[
    \Sigma_k(t) := \int_0^t S(t-s) (t-s)^k C(s)\,dW(s).
  \]
  Then $\Sigma_{k+1} = (k+1) S \ast \Sigma_k$.
\end{lemma}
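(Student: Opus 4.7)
The plan is to expand the right-hand side $(k+1)(S*\Sigma_k)(t)$, plug in the definition of $\Sigma_k$ as a stochastic convolution, and exchange the order of the deterministic time integral with the stochastic one via a stochastic Fubini theorem. After that, the semigroup property collapses $S(t-r)S(r-s)$ to $S(t-s)$, and a one-line computation of the Beta-type integral in the ``kernel'' factor produces exactly $(t-s)^{k+1}/(k+1)$, cancelling the prefactor and yielding $\Sigma_{k+1}(t)$.

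More concretely, starting from
\[
  (k+1)(S*\Sigma_k)(t) = (k+1)\int_0^t S(t-r)\int_0^r S(r-s)(r-s)^k C(s)\,dW(s)\,dr,
\]
I would apply the classical stochastic Fubini theorem (in the form, e.g., of Da Prato--Zabczyk) to interchange the $dr$ and $dW(s)$ integrations over the triangle $\{(r,s):0\le s\le r\le t\}$, obtaining
\[
  (k+1)\int_0^t\!\Bigl(\int_s^t S(t-r)S(r-s)(r-s)^k\,dr\Bigr)C(s)\,dW(s)
  =\int_0^t S(t-s)(t-s)^{k+1}C(s)\,dW(s),
\]
where I used $S(t-r)S(r-s)=S(t-s)$ (which is deterministic and can be taken out of the outer operator) and $\int_s^t (r-s)^k\,dr=(t-s)^{k+1}/(k+1)$. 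The resulting expression is, by definition, $\Sigma_{k+1}(t)$.

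The only delicate point is the invocation of stochastic Fubini. To justify it I would verify the standard integrability hypothesis, namely that the kernel $(r,s)\mapsto \mathbf{1}_{\{s\le r\}}S(t-r)S(r-s)(r-s)^k C(s)$ is jointly progressively measurable and satisfies
\[
  \E\int_0^t\Bigl(\int_s^t \norm{S(t-r)S(r-s)(r-s)^k C(s)}_{\cL^2(U;H)}\,dr\Bigr)^{\!2}ds<\infty.
\]
Using $\norm{S(t-r)S(r-s)}\le M^2 e^{2w(t-s)^+}$, uniform boundedness of $(r-s)^k$ on $[0,t]$, and the assumption $C\in L^p(\Omega;L^2(0,T;\cL^2(U;H)))$ with $p\ge 1$ (for $p<1$ one truncates via stopping and passes to the limit, as in Lemma~\ref{lm:XX}), this is immediate from the Cauchy--Schwarz inequality applied in the $dr$ variable on the bounded interval $[s,t]$. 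I expect this verification of the stochastic Fubini hypothesis to be the main (mild) technical obstacle; everything else is the semigroup identity and an elementary integral.
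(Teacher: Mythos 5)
Your proof is correct and is essentially the paper's argument run in reverse: the paper starts from $\Sigma_k$, writes $(t-s)^k = k\int_s^t (r-s)^{k-1}\,dr$, and applies the stochastic Fubini theorem and the semigroup property to obtain $\Sigma_k = k\,S \ast \Sigma_{k-1}$, which is the same combination of ingredients (Fubini interchange over the triangle, $S(t-r)S(r-s)=S(t-s)$, and the elementary integral) that you use starting from $(k+1)\,S \ast \Sigma_k$. Your explicit verification of the Fubini integrability hypothesis, with localization by stopping times for small $p$, is a sound addition that the paper leaves implicit.
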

\begin{proof}
  Let $k \geq 1$. Using the identity
  \[
    (t-s)^k = k \int_s^t (r-s)^{k-1}\,dr,
  \]
  the stochastic Fubini theorem, and the semigroup property, one has,
  for every $t \in [0,T]$,
  \begin{align*}
    \Sigma_k(t)
    &= \int_0^t S(t-s) (t-s)^k C(s)\,dW(s)\\
    &= k \int_0^t S(t-s) \biggl( \int_s^t (r-s)^{k-1}\,dr \biggr) C(s)\,dW(s)\\
    &= k \int_0^t\!\!\int_0^r S(t-s) (r-s)^{k-1} C(s)\,dW(s)\,dr\\
    &= k\int_0^t S(t-r) \int_0^r S(r-s) (r-s)^{k-1} C(s)\,dW(s)\,dr\\
    &= k S \ast \Sigma_{k-1}(t).
      \qedhere
  \end{align*}
\end{proof}


\section{Singular perturbations by commuting semigroups}
\label{sec:3}
Let us consider the stochastic evolution equation on the Hilbert space $H$
\[
du = Au\,dt + \alpha(u)\,dt + B(u)\,dW, \qquad u(0)=u_0,
\]
and the family of stochastic evolution equations on $H$ indexed by a
parameter $\varepsilon \geq 0$
\[
  du_\varepsilon = (A + \varepsilon G)u_\varepsilon\,dt
  + \alpha(u_\varepsilon)\,dt + B(u_\varepsilon)\,dW, \qquad
  u_\varepsilon(0)=u_0,
\]
where (i) $A$ and $G$ are linear maximal dissipative operators on $H$
such that the closure of $A+\varepsilon G$, denoted by the same
symbol, is maximal dissipative as well; (ii) the initial datum $u_0$
belongs to $L^0(\Omega,\cF_0;H)$; (iii) the coefficients
\[
  \alpha \colon \Omega \times [0,T] \times H \longrightarrow H,
  \qquad
  B \colon \Omega \times [0,T] \times H \longrightarrow \cL^2(U;H)
\]
are Lipschitz continuous in the third variable, uniformly with respect
to the other ones, and such that $\alpha(\cdot,\cdot,h)$ and
$B(\cdot,\cdot,h)$ are progressively measurable for every $h \in H$.
It is well known that under these conditions the above stochastic
equations admit unique mild solutions $u$ and $u_\varepsilon$,
respectively, with continuous trajectories. Moreover, if
$u_0 \in L^p(\Omega,\cF_0;H)$ for some $p>0$, then $u$ and
$u_\varepsilon$ belong to $\sC^p$ (see, e.g., \cite[Chapter~7]{DZ92}
for the case $p \geq 2$ and \cite{cm:SIMA18} for the general case).

The aim of this section is to provide sufficient conditions ensuring
that $u_\varepsilon \to u$ in $\sC^p$. We rely on the following
convergence result, which is a minor modification of \cite[Theorem
2.4]{cm:JFA13} (see also \cite{KvN2}).
\begin{thm}
  \label{thm:convsol}
  Let $p \in [1,\infty\mathclose[$,
  $u_0 \in L^p(\Omega,\mathscr{F}_0;H)$. Assume that $A+\varepsilon G$
  converges to $A$ in the strong resolvent sense. Then
  $u_\varepsilon \to u$ in $\sC^p$ as $\varepsilon \to 0$.
\end{thm}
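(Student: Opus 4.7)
The plan is to reduce the claim to the convergence theorem \cite[Theorem~2.4]{cm:JFA13} for mild solutions, whose two main hypotheses are (i) strong convergence $S_\varepsilon(t)h \to S(t)h$, uniformly for $t$ in compact intervals, of the semigroups generated by $A+\varepsilon G$ to the one generated by $A$, and (ii) a uniform bound $\norm{S_\varepsilon(t)} \leq Me^{wt}$ valid for all $\varepsilon$. Condition (ii) is automatic because maximal dissipativity makes all these semigroups contractions ($M=1$, $w=0$); condition (i) follows from the standing strong resolvent convergence by the Trotter-Kato approximation theorem (see, e.g., \cite{EnNa}), which on contraction semigroups precisely upgrades strong resolvent convergence of the generators to strong, locally uniform convergence of the semigroups.

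For completeness, the underlying argument runs as follows. Writing the mild solution formulas and subtracting, one decomposes $u_\varepsilon - u = R_\varepsilon + L_\varepsilon$, where
\[
R_\varepsilon = (S_\varepsilon - S)u_0 + (S_\varepsilon - S) \ast \alpha(u) + (S_\varepsilon - S) \diamond B(u)
\]
gathers the terms that do \emph{not} involve $u_\varepsilon$, and
\[
L_\varepsilon = S_\varepsilon \ast \bigl(\alpha(u_\varepsilon) - \alpha(u)\bigr) + S_\varepsilon \diamond \bigl(B(u_\varepsilon) - B(u)\bigr)
\]
contains the Lipschitz differences. Using the Lipschitz continuity of $\alpha$ and $B$, the contractivity of $S_\varepsilon$, and Lemma~\ref{lm:XX} with $n=0$, the $\sC^p$-norm of $L_\varepsilon$ on $[0,t]$ is bounded by a constant times $\int_0^t \norm{u_\varepsilon - u}_{\sC^p([0,s])}^p\,ds$, so that Gronwall's lemma reduces the whole problem to $R_\varepsilon \to 0$ in $\sC^p$. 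The term $(S_\varepsilon-S)u_0$ tends to zero by Trotter-Kato and dominated convergence (envelope $2\norm{u_0} \in L^p(\Omega)$), and the deterministic convolution is disposed of analogously.

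The main obstacle is the stochastic convolution $(S_\varepsilon - S) \diamond B(u)$. By Lemma~\ref{lm:XX} its $\sC^p$-norm is controlled by the $L^{p/2}$-norm of
\[
\int_0^T \norm[\big]{\bigl(S_\varepsilon(T-s) - S(T-s)\bigr) B(s,u(s))}_{\cL^2(U;H)}^2\,ds,
\]
and strong operator convergence does \emph{not} in general imply convergence in the Hilbert-Schmidt norm. The remedy is to approximate $B(\cdot,u(\cdot))$ in $L^2(\Omega \times [0,T]; \cL^2(U;H))$ by processes $B_N$ with values of finite rank: on finite-rank operators the Hilbert-Schmidt norm of $(S_\varepsilon - S) B_N$ reduces to the norms of $(S_\varepsilon - S)$ applied to finitely many vectors in $H$, which tend to zero by strong convergence and dominated convergence in $(\omega,s)$; on the tail $B - B_N$, the uniform bound $\norm{S_\varepsilon(t) - S(t)} \leq 2$ yields an error controlled by $\norm{B - B_N}_{L^2}$, which is small for $N$ large. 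A diagonal choice of $N$ and $\varepsilon$ completes the argument, and the claim follows.
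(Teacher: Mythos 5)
The paper offers no internal proof of Theorem~\ref{thm:convsol}---it invokes \cite[Theorem~2.4]{cm:JFA13} (see also \cite{KvN2})---so you were reconstructing that argument, and your skeleton is the standard and correct one: the decomposition $u_\varepsilon-u=R_\varepsilon+L_\varepsilon$, Gronwall on the Lipschitz part, the Trotter--Kato theorem to upgrade strong resolvent convergence of the (maximal dissipative, hence contraction-generating) operators to strong convergence of the semigroups uniformly on compact time intervals, and a finite-rank approximation of $B(\cdot,u(\cdot))$ to bridge the gap between strong operator convergence and Hilbert--Schmidt convergence. That last device is exactly the right cure for the fixed-time estimates, and your dominated-convergence argument in $(\omega,s)$ for the finite-rank columns is sound.

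There is, however, a genuine gap: you use Lemma~\ref{lm:XX} as if it were a maximal inequality, when it only bounds $\E\norm{\cdot}^p$ at each \emph{fixed} time $t$, whereas convergence in $\sC^p=L^p(\Omega;C([0,T];H))$ requires the supremum over $t$ inside the expectation. For $L_\varepsilon$ this is repairable: $S_\varepsilon$ is a genuine contraction semigroup, so $S_\varepsilon\diamond\bigl(B(u_\varepsilon)-B(u)\bigr)$ obeys a BDG-type maximal estimate with $\varepsilon$-independent constant (via Sz.-Nagy dilation to a unitary group; cf. \cite{cm:Expo16}), and Gronwall goes through---though for $1\le p<2$ one additionally needs a small-interval iteration or Lenglart domination, since $\bigl(\int_0^t\norm{\cdot}^2\,ds\bigr)^{p/2}$ cannot be pushed through the time integral directly. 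The real problem is the mixed term $(S_\varepsilon-S)\diamond B(u)$: the kernel $t\mapsto S_\varepsilon(t)-S(t)$ is \emph{not} a semigroup, so no maximal inequality applies; the paper itself stresses precisely this obstruction when estimating $R^3_{m,\varepsilon}$ (``we are not aware of any maximal inequalities for such `nonlinear' stochastic convolutions''). As written, your finite-rank argument only yields $\sup_{t\le T}\E\norm{(S_\varepsilon-S)\diamond B(u)(t)}^p\to 0$, i.e. convergence in $L^\infty(0,T;L^p(\Omega;H))$, which is strictly weaker than convergence in $\sC^p$. To close the gap one can handle the tail $B-B_N$ uniformly in $\varepsilon$ by the two separate contraction-semigroup maximal inequalities, and then treat the finite-rank part by the Da~Prato--Zabczyk factorization method (or an equicontinuity-plus-Kolmogorov argument for the resulting Gaussian-type convolutions with deterministic kernels, noting that Trotter--Kato makes the family $r\mapsto (S_\varepsilon(r)-S(r))h$ equicontinuous and uniformly small); this pathwise-supremum step is where the substance of \cite[Theorem~2.4]{cm:JFA13} lies, and it is missing from your write-up.
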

We recall that a sequence of maximal dissipative operators $(L_n)$ is
said to converge to a maximal dissipative operator $L$ in the strong
resolvent sense if $(\lambda-L_n)^{-1}x \to (\lambda-L)^{-1}x$ for all
$x \in H$ and all $\lambda>0$.

The problem of the convergence of $u_\varepsilon$ to $u$ is thus
reduced to finding sufficient conditions for the convergence of
$A+\varepsilon G$ to $A$ in the strong resolvent sense as
$\varepsilon \to 0$. In view of the results on asymptotic expansions
in the next sections, we limit ourselves to the special case where the
semigroups generated by $A$ and $G$, denoted respectively by $S_A$ and
$S_G$, commute.
\begin{lemma}
  \label{lm:rc}
  Assume that $S_A$ and $S_G$ commute, i.e. that
  $S_A(t)S_G(t)=S_G(t)S_A(t)$ for all $t \geq 0$. Then
  $A+\varepsilon G$ converges to $A$ in the strong resolvent sense as
  $\varepsilon \to 0$.
\end{lemma}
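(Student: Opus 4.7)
The plan is to exhibit the semigroup generated by (the closure of) $A+\varepsilon G$ explicitly as the commuting product $T_\varepsilon(t) := S_A(t)S_G(\varepsilon t)$, and then to represent both resolvents as Laplace transforms so that the strong-resolvent convergence reduces to dominated convergence.

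First I would verify that $T_\varepsilon$ is a $C_0$-semigroup of contractions on $H$. Maximal dissipativity of $A$ and $G$ on a Hilbert space gives $\norm{S_A(t)}, \norm{S_G(t)} \leq 1$. The commutation hypothesis, together with the rational-density argument recalled in \S\ref{sec:prel}, yields $S_A(s)S_G(r)=S_G(r)S_A(s)$ for all $s,r \geq 0$, whence the semigroup identity $T_\varepsilon(t+s)=T_\varepsilon(t)T_\varepsilon(s)$ follows by direct manipulation; strong continuity at zero and contractivity are immediate. I would then identify the generator: for $x \in \dom(A)\cap\dom(G)$, splitting
\[
\frac{T_\varepsilon(t)x-x}{t} = S_A(t)\,\frac{S_G(\varepsilon t)x-x}{t} + \frac{S_A(t)x-x}{t}
\]
and letting $t\to 0^+$ yields $Ax+\varepsilon Gx$, so the generator of $T_\varepsilon$ is an m-dissipative extension of $A+\varepsilon G$ restricted to $\dom(A)\cap\dom(G)$. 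By the standing hypothesis that the closure of $A+\varepsilon G$ is itself maximal dissipative, uniqueness of m-dissipative extensions forces the two to coincide.

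Once the semigroup is identified, the Hille--Yosida formula gives, for every $\lambda>0$ and $x\in H$,
\[
\bigl((\lambda-(A+\varepsilon G))^{-1} - (\lambda-A)^{-1}\bigr)x
= \int_0^\infty e^{-\lambda t}\, S_A(t)\bigl(S_G(\varepsilon t)x - x\bigr)\,dt.
\]
For each fixed $t\geq 0$, strong continuity of $S_G$ at the origin yields $S_G(\varepsilon t)x\to x$ as $\varepsilon\to 0$, while the integrand is dominated by $2e^{-\lambda t}\norm{x}$. Dominated convergence concludes the argument.

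The only real obstacle is the generator identification: one must know that, under the standing assumption of maximal dissipativity of the closure of $A+\varepsilon G$, the semigroup produced by the product $T_\varepsilon$ coincides with the one generated by that closure. This is a standard consequence of the uniqueness of $C_0$-semigroups associated with a given m-dissipative generator, once the commutativity has been pushed from the diagonal $s=r$ to arbitrary pairs $(s,r)$, which the Preliminaries section already provides.
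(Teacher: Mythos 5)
Your proposal is correct and takes essentially the same route as the paper: both represent the resolvent of $A+\varepsilon G$ as the Laplace transform of the product semigroup $S_A(t)S_G(\varepsilon t)$ and conclude by dominated convergence, using the strong continuity of $S_G$ to get $S_G(\varepsilon t)x \to x$ pointwise in $t$. The only difference is bookkeeping: you identify the generator of the product semigroup with the closure of $A+\varepsilon G$ via the maximality of m-dissipative operators among dissipative extensions, whereas the paper treats this point in the remark following the lemma through a core argument; both justifications are valid.
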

\begin{proof}
  One has, for any $\lambda>0$ and $f \in H$,
  \[
    \bigl( \lambda - (A+\varepsilon G) \bigr)^{-1}f = \int_0^\infty
    e^{-\lambda t} S_{A+\varepsilon G}(t)f\,dt = \int_0^\infty e^{-\lambda
      t} S_A(t) S_{\varepsilon G}(t)f\,dt
  \]
  and $S_{\varepsilon G}(t)f \to f$ as $\varepsilon \to 0$, hence, by
  dominated convergence,
  \[
    \lim_{\varepsilon \to 0} \bigl( \lambda - (A+\varepsilon G) \bigr)^{-1}f
    = \int_0^\infty e^{-\lambda t} S_A(t)f \,dt = (\lambda - A)^{-1}f.
  \qedhere
  \]
\end{proof}

\begin{rmk}
  (i) Under the assumption that $S_A$ and $S_G$ commute,
  $\dom(A) \cap \dom(G)$ is a core for the generator of the product
  semigroup $S_AS_{\varepsilon G}$, which is contractive and strongly
  continuous. Its generator is hence equal to the closure of
  $A+\varepsilon G$. So the hypothesis of maximal dissipativity of
  (the closure of) $A+\varepsilon G$ is automatically satisfied here.

  \noindent (ii) It is clear from the proof of the previous lemma that
  not even the assumption of dissipativity of $A$ and $G$ is needed,
  but just that the resolvent sets of $A$ and $G$ have non-empty
  intersection. In particular, the statement of the lemma continues to
  hold if $A$ and $G$ are maximal quasi-dissipative, i.e. if there
  exist $a$ and $b \in \erre_+$ such that $A-aI$ and $G-bI$ are
  maximal dissipative. In this respect, as long as one is concerned
  with applications to the stochastic equation, there is no loss of
  generality assuming that $A$ and $G$ are dissipative rather than
  quasi-dissipative, because the latter case reduces to the former by
  adding a linear term to the drift $\alpha$.

  \noindent (iii) The perturbation result in Lemma~\ref{lm:rc}
  strongly relies on the commutativity assumption between $S_A$ and
  $S_G$. This assumption is essential also to derive the asymptotic
  expansion results in the next sections. For other assumptions on $A$
  and $G$ leading to convergence of $A+\varepsilon G$ to $A$ in the
  strong resolvent sense as $\varepsilon \to 0$, see, e.g.,
  \cite{Oka:sing} and references therein.
\end{rmk}


\section{Asymptotic expansion of $u_\varepsilon$}
\label{sec:exp}
Our next goal is to obtain an expression of the difference
$u_\varepsilon-u$ as a finite power series in $\varepsilon$ plus a
remainder. Once such an expression is obtained, the main issue is to
prove estimates on the coefficients of the power series and on the
remainder. Such estimates will crucially depend on suitable regularity
assumptions on the coefficients $\alpha$ and $B$ that will be assumed
throughout the section to be random and time-dependent, but not
explicitly dependent on $u$. In particular, let us consider the
stochastic evolution equations
\begin{equation}
  \label{eq:0a}
  du = Au\,dt + \alpha\,dt + B\,dW, \qquad u(0)=u_0,
\end{equation}
and
\begin{equation}
  \label{eq:epsa}
  du_\varepsilon = Au_\varepsilon\,dt + \varepsilon Gu_\varepsilon\,dt
  + \alpha\,dt + B\,dW, \qquad u_\varepsilon(0)=u_0,
\end{equation}
where $A$ and $G$ are maximal dissipative and generate commuting
semigroups. As before, we denote the closure of $A+\varepsilon G$,
$\varepsilon>0$, by the same symbol.
Moreover, we assume that there exist $p \in [1,\infty\mathclose[$ and
an integer $m \geq 1$ such that
\begin{gather*}
  u_0 \in L^p(\Omega;\dom(G^m)), \qquad
  \alpha \in L^p(\Omega;L^1(0,T;\dom(G^m))),\\
  B \in L^p(\Omega;L^2(0,T;\cL^2(U;\dom(G^m)))).
\end{gather*}
Then equations \eqref{eq:0a} and \eqref{eq:epsa} admit unique mild
solutions $u$ and $u_\varepsilon$ in $\sC^p$,
respectively.\footnote{Since $\alpha$ and $B$ are not functions of the
  unknown, the only non-trivial issue is the pathwise continuity,
  which follows by the contractivity of the semigroups generated by
  $A$ and (the closure of) $A+\varepsilon G$ (see, e.g.,
  \cite[{\S}6.2]{DZ92}).}
Just for convenience, we also assume that $\varepsilon \in [0,1]$.

All results in this section do not use the assumption that $A$ and $G$
are maximal dissipative, except in an indirect way in
Proposition~\ref{prop:ud}, namely through
Theorem~\ref{thm:convsol}. In particular, all results except
Proposition~\ref{prop:ud} continue to hold under the same assumptions
on $u_0$, $\alpha$ and $B$, commutativity of $S_A$ and $S_G$, and the
existence of of unique solutions $u$ and $u_\varepsilon \in \sC^p$ to
\eqref{eq:0a} and \eqref{eq:epsa}, respectively.

\medskip

We begin with a decomposition of $u_\varepsilon$ that is essentially
of algebraic nature.
\begin{prop}
  \label{prop:ina}
  There exist adapted processes $v_1,\ldots,v_{m-1}$ and
  $R_{m,\varepsilon}$ in $\sC^p$ such that
  \[
  u_\varepsilon = u + \sum_{k=1}^{m-1} \frac{\varepsilon^k}{k!} v_k 
  + R_{m,\varepsilon} \qquad \forall \varepsilon \in
  \mathopen]0,1].
  \]
\end{prop}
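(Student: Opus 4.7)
The plan is to factorise $S_{A+\varepsilon G}(t) = S_A(t)\,S_G(\varepsilon t)$, which is legitimate here because $S_A$ and $S_G$ commute and $S_{\varepsilon G}(t)=S_G(\varepsilon t)$, and then to Taylor-expand the factor $S_G(\varepsilon t)$ in powers of $\varepsilon$ by means of the semigroup Taylor formula recalled in \S\ref{sec:prel}.

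Writing the mild solution to \eqref{eq:epsa} as
\[
u_\varepsilon(t) = S_{A+\varepsilon G}(t)\,u_0 + \int_0^t S_{A+\varepsilon G}(t-s)\alpha(s)\,ds + \int_0^t S_{A+\varepsilon G}(t-s)\,B(s)\,dW(s),
\]
and applying the Taylor formula to $S_G(\varepsilon\tau)\phi$ for $\phi\in\dom(G^m)$, followed by the change of variable $u=\varepsilon r$ in the integral remainder, one obtains
\[
S_G(\varepsilon\tau)\phi = \sum_{k=0}^{m-1}\frac{\varepsilon^k\tau^k}{k!}G^k\phi + \frac{\varepsilon^m}{(m-1)!}\int_0^\tau(\tau-r)^{m-1}S_G(\varepsilon r)G^m\phi\,dr.
\]
Substituting this into the three summands of the mild solution (with $\tau=t$ or $\tau=t-s$, and $\phi$ replaced by $u_0$, $\alpha(s)$ or $B(s)$ respectively), and using the closedness of $G^k$ to bring it inside the Bochner and It\^o integrals, one finds
\[
u_\varepsilon = u + \sum_{k=1}^{m-1}\frac{\varepsilon^k}{k!}v_k + R_{m,\varepsilon},
\]
where
\[
v_k(t) := t^k S_A(t)G^k u_0 + \int_0^t (t-s)^k S_A(t-s)G^k\alpha(s)\,ds + \int_0^t (t-s)^k S_A(t-s)G^k B(s)\,dW(s),
\]
and $R_{m,\varepsilon}$ is the sum of three Taylor integral remainders, each carrying the prefactor $\varepsilon^m/(m-1)!$ and an extra $\int_0^{\tau}(\tau-r)^{m-1}S_A(\tau)S_G(\varepsilon r)\,dr$ weighting against $G^m u_0$, $G^m\alpha(s)$, or $G^m B(s)$.

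It remains to show $v_k,R_{m,\varepsilon}\in\sC^p$. For each $v_k$, the deterministic summands are continuous in $t$ by strong continuity of $S_A$ and dominated convergence, with $L^p(\Omega)$-norms controlled by $\norm{G^k u_0}_{L^p(\Omega;H)}$ and $\norm{G^k\alpha}_{L^p(\Omega;L^1(0,T;H))}$. The stochastic summand is exactly the process $\Sigma_k$ of Lemma~\ref{lm:desc} for $C=G^k B\in L^p(\Omega;L^2(0,T;\cL^2(U;H)))$, which Lemma~\ref{lm:desc} represents as a $k$-fold $S_A$-convolution of the ordinary stochastic convolution $S_A\diamond(G^k B)$; the latter lies in $\sC^p$ since it is the stochastic-convolution part of the mild solution to the SPDE driven by $A$ with data $(G^k u_0,G^k\alpha,G^k B)$, and iterated $S_A\ast$-convolution preserves $\sC^p$. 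Defining $R_{m,\varepsilon}:=u_\varepsilon-u-\sum_{k=1}^{m-1}\varepsilon^k v_k/k!$ then makes $R_{m,\varepsilon}\in\sC^p$ automatic, since $u,u_\varepsilon\in\sC^p$ by hypothesis.

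The only genuine, if still routine, technical point is the commutation of the closed operators $G^k$ (and, for the remainder, of $S_G(\varepsilon r)$) with the Bochner and It\^o integrals; this is granted by the $\dom(G^m)$-valued integrability assumptions on $u_0$, $\alpha$ and $B$, together with, if one chooses to write the remainder explicitly, the stochastic Fubini theorem already used in the proof of Lemma~\ref{lm:desc}. Everything else is bookkeeping.
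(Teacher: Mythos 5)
Your decomposition is exactly the paper's: the same factorisation $S_{A+\varepsilon G}=S_AS_{\varepsilon G}$, the same semigroup Taylor formula on $\dom(G^m)$, and the same explicit formulas for $v_k$ and for the three-part remainder, so the algebraic core of your argument and the paper's coincide. Where you genuinely diverge is in the verification that $v_k$ and $R_{m,\varepsilon}$ belong to $\sC^p$. The paper proceeds by an inductive trick: for the lowest order it writes $v_1=\varepsilon^{-1}\bigl(u_\varepsilon-u-R_{m,\varepsilon}\bigr)$ for an arbitrary fixed $\varepsilon\in\mathopen]0,1]$ and deduces $v_1\in\sC^p$ from $u,u_\varepsilon\in\sC^p$ together with the remainder estimate of Lemma~\ref{lm:trep} (a forward reference to the later lemmas estimating $R^3_{m,\varepsilon}$), then induces on $m$; the membership of $R_{m,\varepsilon}$ in $\sC^p$ is carried by those same lemmas. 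You instead prove $v_k\in\sC^p$ directly: the deterministic summands by strong continuity and the $\dom(G^m)$-integrability of the data, and the stochastic summand by invoking Lemma~\ref{lm:desc} to represent $\Sigma_k$ as $k!\,S_A^{\ast k}\bigl(S_A\diamond G^kB\bigr)$, reducing everything to the standard fact that an ordinary stochastic convolution with a contraction semigroup and $L^p(\Omega;L^2(0,T;\cL^2(U;H)))$ integrand lies in $\sC^p$, plus the trivial observation that $S_A\ast$ preserves $\sC^p$; you then get $R_{m,\varepsilon}\in\sC^p$ for free as $u_\varepsilon-u-\sum_k\varepsilon^k v_k/k!$. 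This inverts the logical dependence of the paper's proof and avoids its forward reference, and it is in fact the strategy the paper itself sketches in the remark immediately following Proposition~\ref{prop:ina} (there via maximal estimates rather than via Lemma~\ref{lm:desc}); what the paper's arrangement buys in exchange is that the quantitative bounds on $R_{m,\varepsilon}$, needed anyway for the asymptotics, do double duty. Your flagged technical points (closedness of $G^k$ through Bochner and It\^o integrals, stochastic Fubini for the remainder) are the right ones and are covered by the standing assumptions, so the proof is complete as proposed.
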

\begin{proof}
  It follows by commutativity of $S_A$ and $S_G$ that
  \[
  u_\varepsilon = S_AS_{\varepsilon G}u_0 %
  + S_AS_{\varepsilon G} \ast \alpha %
  + S_AS_{\varepsilon G} \diamond B,
  \]
  where, by the Taylor-like formula for strongly continuous semigroups
  of \S\ref{sec:prel},
  \[
  S_{\varepsilon G}(t) = \sum_{k=0}^{m-1} \frac{t^k}{k!} \varepsilon^k G^k
  + \frac{\varepsilon^m}{(m-1)!} \int_0^t (t-r)^{m-1} S_{\varepsilon G}(r) G^m,
  \]
  as an identity of linear operators on $\dom(G^m)$.
  Since $u_0 \in L^p(\Omega;\dom(G^m))$, one has
  \[
  S_{A+\varepsilon G}(t)u_0 = \sum_{k=0}^{m-1} 
  \frac{\varepsilon^k t^k}{k!} S_A(t) G^ku_0
  + \frac{\varepsilon^m}{(m-1)!} S_A(t)
  \int_0^t (t-r)^{m-1}S_{\varepsilon G}(r)G^m u_0\,dr.
  \]
  Similarly, since $\alpha \in L^p(\Omega;L^1(0,T;\dom(G^m)))$ and
  $B \in L^p(\Omega;L^2(0,T;\cL^2(U;\dom(G^m))))$,
  \begin{align*}
    &S_{A+\varepsilon G} \ast \alpha(t) - \sum_{k=0}^{m-1} \int_0^t
    \frac{\varepsilon^k(t-s)^k}{k!}
    S_A(t-s)G^k\alpha(s)\,ds\\
    &\hspace{3em} = \frac{\varepsilon^m}{(m-1)!} \int_0^t
    S_A(t-s)\int_0^{t-s} (t-s-r)^{m-1} S_{\varepsilon G}(r) G^m
    \alpha(s)\,dr\,ds
  \end{align*}
  as well as
  \begin{equation}
  \label{eq:rint}
  \begin{split}
    &S_{A+\varepsilon G} \diamond B(t)
    - \sum_{k=0}^{m-1} \int_0^t \frac{\varepsilon^k(t-s)^k}{k!} 
    S_A(t-s) G^k B(s)\,dW(s)\\
    &\hspace{3em} = \frac{\varepsilon^m}{(m-1)!}
    \int_0^t S_A(t-s) \int_0^{t-s} (t-s-r)^{m-1} S_{\varepsilon G}(r) G^m
    B(s)\,dr\,dW(s).
  \end{split}
  \end{equation}
  Therefore, introducing the $H$-valued processes $v_1,\ldots,v_{m-1}$
  defined as
  \begin{align*}
    v_k(t) &:= t^k S_A(t) G^k u_0 %
    + \int_0^t (t-s)^k S_A(t-s) G^k \alpha(s)\,ds\\
    &\phantom{:=\ } + \int_0^t (t-s)^k S_A(t-s) G^k B(s)\,dW(s)
  \end{align*}
  for each $k \in \{1,\ldots,m-1\}$, and the family of $H$-valued
  processes $(R_{m,\varepsilon})_{\varepsilon\in\mathopen]0,1]}$
  defined as
  \begin{align*}
    R_{m,\varepsilon}(t)
    &:= \frac{\varepsilon^m}{(m-1)!} S_A(t)
    \int_0^t (t-r)^{m-1}S_{\varepsilon G}(r)G^m u_0\,dr\\
    &\phantom{:=\ } + \frac{\varepsilon^m}{(m-1)!} \int_0^t
    S_A(t-s)\int_0^{t-s} (t-s-r)^{m-1} S_{\varepsilon G}(r) G^m
    \alpha(s)\,dr\,ds\\
    &\phantom{:=\ } + \frac{\varepsilon^m}{(m-1)!}
    \int_0^t S_A(t-s) \int_0^{t-s} (t-s-r)^{m-1} S_{\varepsilon G}(r) G^m
    B(s)\,dr\,dW(s),
  \end{align*}
  the desired decomposition follows. For each $k \in
  \{0,\ldots,m-1\}$, it is clear that $v_k$ is adapted, so it remains
  to check that $v_k$ belongs to $\sC^p$. For $m=1$ one has
  \[
  v_1 = \frac{1}{\varepsilon} \bigl( u - u_\varepsilon - R_{m,\varepsilon}
  \bigr),
  \]
  hence the claim for $m=1$ follows, choosing
  $\varepsilon \in \mathopen]0,1]$ arbitrarily, by the assumptions on
  $u$ and $u_\varepsilon$, and Lemma~\ref{lm:trep} below. By induction
  on $m$, the claim for general $m$ is proved.
\end{proof}

\begin{rmk}
  Estimates for the $\sC^p$-norm of $v_k$ can be obtained in a more
  direct (and precise) way exploiting the dissipativity of $A$. In
  fact, one has
  \[
  \norm[\big]{(\cdot)^k S_A G^ku_0}_{\sC^p} \leq M_A T^k e^{w_AT}
  \norm[\big]{u_0}_{L^p(\Omega;\dom(G^k))},
  \]
  and
  \begin{align*}
  \norm[\bigg]{\int_0^\cdot (\cdot-s)^k
    S_A(\cdot-s)G^k\alpha(s)\,ds}_{\sC^p}
  \leq M_A T^k e^{w_AT} \norm[\big]{\alpha}_{L^p(\Omega;L^1(0,T;\dom(G^k)))},
  \end{align*}
  (in fact just assuming that $A$ is only the generator of a strongly
  continuous semigroup), as well as, by maximal estimates for
  stochastic convolutions,
  \[
  \norm[\bigg]{\int_0^\cdot (\cdot-s)^k S_A(\cdot-s) G^k
    B(s)\,dW(s)}_{\sC^p} \lesssim T^k e^{w_AT}
  \norm[\big]{B}_{L^p(\Omega;L^2(0,T;\cL^2(U;\dom(G^k))))}.
  \]
  Alternative assumptions on $A$ yield similar estimates for the
  stochastic convolution, for instance if $A$ generates an analytic
  semigroup. We shall not pursue this issue here.
\end{rmk}

We are going to estimate $R_{m,\varepsilon}$ in $L^p(\Omega;H)$
pointwise with respect to the time variable as well as in $\sC^p$. As
mentioned above, such estimates do not use the dissipativity of $A$
and $G$. For this reason, we shall prove them under the sole
assumption that $A$ and $G$ are generators of strongly continuous
semigroups $S_A$ and $S_G$, respectively, with
$\norm{S_A(t)} \leq M_A e^{w_At}$ and
$\norm{S_G(t)} \leq M_G e^{w_Gt}$ for all $t \in \erre_+$, where
$M_A$, $M_G \geq 1$ and $w_A$, $w_G \in \erre$.

Let us set
\begin{align*}
  R^1_{m,\varepsilon}(t) &:= S_A(t)
  \int_0^t (t-r)^{m-1}S_{\varepsilon G}(r)G^m u_0\,dr,\\
  R^2_{m,\varepsilon}(t) &:= \int_0^t S_A(t-s) \int_0^{t-s}
  (t-s-r)^{m-1} S_{\varepsilon G}(r) G^m \alpha(s)\,dr\,ds,\\
  R^3_{m,\varepsilon}(t) &:= \int_0^t S_A(t-s) \int_0^{t-s}
  (t-s-r)^{m-1} S_{\varepsilon G}(r) G^m B(s)\,dr\,dW(s),
\end{align*}
so that
\begin{equation}
  \label{eq:rm}
  R_{m,\varepsilon} = \frac{\varepsilon^m}{(m-1)!} \bigl(%
  R^1_{m,\varepsilon} + R^2_{m,\varepsilon} + R^3_{m,\varepsilon} \bigr),
\end{equation}
and introduce the function $f_\varepsilon\colon \erre_+ \to \erre_+$
defined as
\begin{equation}
  \label{eq:fe}
f_\varepsilon(t) := e^{w_A t} \int_0^{t} (t-r)^{m-1} e^{\varepsilon
  w_G r}\,dr.
\end{equation}
\begin{lemma}
  \label{lm:ino}
  One has, for every $t \in [0,T]$ and $\varepsilon \in [0,1]$,
  \[
  \norm{R^1_{m,\varepsilon}(t)} \leq M_AM_G f_\varepsilon(t)
  \norm[\big]{u_0}_{\dom(G^m)}
  \]
  and
  \[
  \norm{R^2_{m,\varepsilon}(t)} \leq M_AM_G \int_0^t
    f_\varepsilon(t-s) \norm[\big]{\alpha(s)}_{\dom(G^m)}\,ds.
  \]
\end{lemma}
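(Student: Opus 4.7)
The proof is essentially a direct estimate. The plan is to move the norm inside the Bochner integrals using the standard inequality $\norm{\int f} \le \int \norm{f}$, apply the semigroup bounds $\norm{S_A(t)} \le M_Ae^{w_At}$ and $\norm{S_{\varepsilon G}(r)} = \norm{S_G(\varepsilon r)} \le M_Ge^{w_G\varepsilon r}$, use the obvious inequality $\norm{G^m\phi} \le \norm{\phi}_{\dom(G^m)}$, and finally recognize the remaining scalar integral as $f_\varepsilon(t)$.

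Concretely, for the first bound I would write
\[
  \norm{R^1_{m,\varepsilon}(t)} \le \norm{S_A(t)} \int_0^t (t-r)^{m-1} \norm{S_{\varepsilon G}(r)} \, \norm{G^m u_0} \, dr,
\]
and then bound each factor to get $M_A e^{w_At} \int_0^t (t-r)^{m-1} M_G e^{\varepsilon w_G r}\,dr \cdot \norm{u_0}_{\dom(G^m)}$, which is exactly $M_AM_G f_\varepsilon(t)\norm{u_0}_{\dom(G^m)}$. For the second bound the same strategy applies, with an additional outer integral in $s$: Bochner's inequality gives
\[
  \norm{R^2_{m,\varepsilon}(t)} \le \int_0^t \norm{S_A(t-s)} \int_0^{t-s} (t-s-r)^{m-1} \norm{S_{\varepsilon G}(r)}\,\norm{G^m\alpha(s)}\,dr\,ds,
\]
after which the same scalar estimates produce $M_AM_G \int_0^t f_\varepsilon(t-s) \norm{\alpha(s)}_{\dom(G^m)}\,ds$.

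There is no real obstacle here: the key structural fact is merely that $G^m$ commutes with $S_{\varepsilon G}(r)$ on $\dom(G^m)$, which is a standard property of strongly continuous semigroups and is already used implicitly in the very definition of $R^1_{m,\varepsilon}$ and $R^2_{m,\varepsilon}$ (so that $S_{\varepsilon G}(r)G^m u_0$ makes sense with the claimed norm bound). Everything else is the triangle inequality in a Bochner integral plus operator-norm bounds, and the constraint $\varepsilon \in [0,1]$ is not actually needed for these particular estimates — it only becomes relevant when one later wants uniform-in-$\varepsilon$ bounds on $f_\varepsilon$.
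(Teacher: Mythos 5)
Your proof is correct and takes essentially the same route as the paper's, which likewise moves the norm inside the Bochner integrals (the paper calls this Minkowski's inequality), applies the exponential bounds on $\norm{S_A(\cdot)}$ and $\norm{S_{\varepsilon G}(\cdot)}$ together with $\norm{G^m\phi} \leq \norm{\phi}_{\dom(G^m)}$, and identifies the resulting scalar integral with $f_\varepsilon$. One minor quibble: the commutation of $G^m$ with $S_{\varepsilon G}(r)$ is not actually needed here, since in the definitions of $R^1_{m,\varepsilon}$ and $R^2_{m,\varepsilon}$ the semigroup is applied directly to $G^m u_0 \in H$ (respectively $G^m\alpha(s)$), so the estimate uses nothing beyond the operator-norm bound.
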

\begin{proof}
  Both estimates are immediate consequences of Minkowski's inequality
  and the definition of $f$. For instance, the second one is given by
  \begin{align*}
    \norm{R^2_{m,\varepsilon}(t)} &\leq M_AM_G \int_0^t e^{w_A(t-s)}
    \norm[\big]{\alpha(s)}_{\dom(G^m)} \int_0^{t-s}
    (t-s-r)^{m-1} e^{\varepsilon w_G r}\,dr\,ds\\
    &= M_AM_G \int_0^t f_\varepsilon(t-s)
    \norm[\big]{\alpha(s)}_{\dom(G^m)}\,ds. \qedhere
  \end{align*}
\end{proof}

The running maximum of the function $f_\varepsilon$ defined in
\eqref{eq:fe} will be denoted by $f^*_\varepsilon$,
i.e. $f^*_\varepsilon(t) := \max_{s \in [0,t]} f_\varepsilon(s)$.
\begin{lemma}
  One has, for every $t \in [0,T]$ and $\varepsilon \in [0,1]$,
  \[
  \norm[\big]{R^1_{m,\varepsilon}(t)}_{L^p(\Omega;H)} \leq M_AM_G
  f_\varepsilon(t) \norm[\big]{u_0}_{L^p(\Omega;\dom(G^m))}
  \]
  and
  \[
  \norm[\big]{R^2_{m,\varepsilon}(t)}_{L^p(\Omega;H)} \leq M_AM_G
  f^*_\varepsilon(t) \norm[\big]{\alpha}_{L^p(\Omega;L^1(0,t;\dom(G^m)))}
  \]
\end{lemma}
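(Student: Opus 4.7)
The plan is to derive both estimates as essentially immediate consequences of the pointwise (in $\omega$) bounds proved in Lemma~\ref{lm:ino}, by integrating in $\omega$.

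For the first estimate, I would apply the first inequality of Lemma~\ref{lm:ino} pointwise in $\omega \in \Omega$, which gives
\[
  \norm{R^1_{m,\varepsilon}(t)} \leq M_AM_G f_\varepsilon(t) \norm[\big]{u_0}_{\dom(G^m)}
\]
almost surely. Since $f_\varepsilon(t)$ is deterministic, raising both sides to the $p$-th power, taking expectations, and taking $p$-th roots yields the stated bound directly from the definition of $\norm{u_0}_{L^p(\Omega;\dom(G^m))}$.

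For the second estimate, I would start from the second inequality of Lemma~\ref{lm:ino}, namely
\[
  \norm{R^2_{m,\varepsilon}(t)} \leq M_AM_G \int_0^t f_\varepsilon(t-s) \norm[\big]{\alpha(s)}_{\dom(G^m)}\,ds.
\]
Then I would use that, for $s \in [0,t]$, one has $f_\varepsilon(t-s) \leq f^*_\varepsilon(t)$ by definition of the running maximum. Pulling this deterministic factor out of the integral gives
\[
  \norm{R^2_{m,\varepsilon}(t)} \leq M_AM_G f^*_\varepsilon(t) \int_0^t \norm[\big]{\alpha(s)}_{\dom(G^m)}\,ds
  = M_AM_G f^*_\varepsilon(t) \norm[\big]{\alpha}_{L^1(0,t;\dom(G^m))}
\]
almost surely. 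Finally, taking $L^p(\Omega)$-norms of both sides and recognising the right-hand side as $M_AM_G f^*_\varepsilon(t) \norm{\alpha}_{L^p(\Omega;L^1(0,t;\dom(G^m)))}$ gives the claim.

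There is no genuine obstacle here; the only mildly delicate point is that one must keep $f_\varepsilon$ (resp.\ $f^*_\varepsilon$) \emph{outside} the $L^p(\Omega)$-norm, which is legitimate because it is deterministic. The use of the running maximum $f^*_\varepsilon(t)$ in place of $f_\varepsilon(t-s)$ is what turns the time-convolution estimate into a product estimate compatible with the $L^p(\Omega;L^1(0,t;\dom(G^m)))$ norm of $\alpha$; one could alternatively phrase the bound directly via a time convolution, but the stated form is cleaner for subsequent use.
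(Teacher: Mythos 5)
Your proof is correct and follows essentially the same route as the paper: the first bound is immediate from the pointwise estimate of Lemma~\ref{lm:ino} since $f_\varepsilon(t)$ is deterministic, and the second is exactly the paper's $L^\infty$--$L^1$ H\"older step in time, since $\norm{f_\varepsilon(t-\cdot)}_{L^\infty(0,t)} = f^*_\varepsilon(t)$ by continuity of $f_\varepsilon$. Nothing is missing.
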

\begin{proof}
  The first estimate is evident. The second one follows by
  \begin{align*}
  \int_0^t f_\varepsilon(t-s) \norm[\big]{\alpha(s)}_{\dom(G^m)}\,ds
  &\leq \norm[\big]{f_\varepsilon(t-\cdot)}_{L^\infty(0,t)}
  \norm[\big]{\alpha}_{L^1(0,t;\dom(G^m))}\\
  &= \norm[\big]{f_\varepsilon}_{L^\infty(0,t)}
  \norm[\big]{\alpha}_{L^1(0,t;\dom(G^m))}. \qedhere
\end{align*}
\end{proof}

\begin{lemma}
\label{lm:ri}
  One has, for every $\varepsilon \in [0,1]$,
  \[
  \norm[\big]{R^1_{m,\varepsilon}}_{\sC^p} \leq M_AM_G
  f^*_\varepsilon(T) \norm[\big]{u_0}_{L^p(\Omega;\dom(G^m))}
  \]
  and
  \[
  \norm[\big]{R^2_{m,\varepsilon}}_{\sC^p} \leq M_AM_G
  f^*_\varepsilon(T) \norm[\big]{\alpha}_{L^p(\Omega;L^1(0,T;\dom(G^m)))}
  \]
\end{lemma}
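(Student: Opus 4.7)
The plan is to argue that both bounds follow directly from the \emph{pathwise} estimates established in Lemma~\ref{lm:ino}, which are stronger than the $L^p(\Omega;H)$-pointwise-in-time estimates of the intermediate lemma. The key observation is that the bounds in Lemma~\ref{lm:ino} hold $\omega$ by $\omega$, so one can take the supremum in time before rather than after the probabilistic norm.

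For $R^1_{m,\varepsilon}$, I would start from the almost sure inequality
\[
  \norm{R^1_{m,\varepsilon}(t)} \leq M_AM_G f_\varepsilon(t)
  \norm[\big]{u_0}_{\dom(G^m)},
\]
observe that $f_\varepsilon(t) \leq f^*_\varepsilon(T)$ for all $t \in [0,T]$, take the supremum over $t$ on the left (the right-hand side no longer depends on $t$ up to the constant $f^*_\varepsilon(T)$), and finally apply the $L^p(\Omega)$-norm to both sides. This yields the first claimed estimate.

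For $R^2_{m,\varepsilon}$, I would proceed analogously, starting from
\[
  \norm{R^2_{m,\varepsilon}(t)} \leq M_AM_G \int_0^t
  f_\varepsilon(t-s) \norm[\big]{\alpha(s)}_{\dom(G^m)}\,ds
  \leq M_AM_G f^*_\varepsilon(T) \int_0^T
  \norm[\big]{\alpha(s)}_{\dom(G^m)}\,ds,
\]
where the second inequality uses $f_\varepsilon(t-s) \leq f^*_\varepsilon(T)$ together with the extension of the $s$-integral from $[0,t]$ to $[0,T]$. The right-hand side is independent of $t$, so taking $\sup_{t \in [0,T]}$ on the left is free, and then taking $L^p(\Omega)$-norms gives the second estimate.

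There is essentially no obstacle here: the whole content of the lemma is that, because the bounds in Lemma~\ref{lm:ino} are pathwise and the time-dependence on the right-hand side is deterministic, exchanging $\sup_t$ with $\E^{1/p}$ is legitimate. No Burkholder or Fubini argument is needed at this stage, since the stochastic integral has already been absorbed into the deterministic bound on $\|R^i_{m,\varepsilon}(t)\|$ provided by the preceding lemmas.
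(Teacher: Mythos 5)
Your argument is correct and coincides with the paper's own proof: both rest on the pathwise bounds of Lemma~\ref{lm:ino}, the monotone replacement $f_\varepsilon(t-s) \leq f^*_\varepsilon(t) \leq f^*_\varepsilon(T)$ together with enlarging the $L^1$-integral from $[0,t]$ to $[0,T]$, and then taking the supremum in $t$ before applying the $L^p(\Omega)$-norm. Nothing is missing, and you are right that no Burkholder or Fubini argument is needed here since the deterministic time-dependence has already been isolated.
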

\begin{proof}
  The first estimate is again evident, by Lemma~\ref{lm:ino}. The
  second one follows by
  \[
  \int_0^t f_\varepsilon(t-s) \norm[\big]{\alpha(s)}_{\dom(G^m)}\,ds
  \leq f^*_\varepsilon(t) \norm[\big]{\alpha}_{L^1(0,t;\dom(G^m))}
  \leq f^*_\varepsilon(T) \norm[\big]{\alpha}_{L^1(0,T;\dom(G^m))}.
  \qedhere
  \]
\end{proof}

The estimates of $R^3_{m,\varepsilon}$ are more delicate. The reason
is that the double integral in \eqref{eq:rint} is not a stochastic
convolution. In fact, while it can be written as
\[
\int_0^t R(t-s)B(s)\,dW(s), \qquad R(t) := \int_0^t (t-r)^{m-1}
S_{\varepsilon G}(r) G^m\,dr,
\]
the family of operators $(R(t))_{t\in\erre_+}$ is not a
semigroup. Unfortunately we are not aware of any maximal inequalities
for such ``nonlinear'' stochastic convolutions. We shall nonetheless
obtain estimates on the remainder term $R^3_{m,\varepsilon}$ by
different arguments.
\begin{lemma}
  One has, for every $t \in [0,T]$ and $\varepsilon \in [0,1]$,
  \[
  \norm[\big]{R^3_{m,\varepsilon}(t)}_{L^p(\Omega;H)} \leq M_AM_G
  f^*_\varepsilon(t)
  \norm[\big]{B}_{L^p(\Omega;L^2(0,t;\cL^2(U;\dom(G^m))))}.
  \]
\end{lemma}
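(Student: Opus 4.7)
My plan is to recognise that, although the deterministic operator-valued integrand in $R^3_{m,\varepsilon}(t)$ is not of the simple ``scalar times $S_A(t-\cdot)$'' form required by Lemma~\ref{lm:XX}, the integral is still a standard It\^o integral in $s$ with a bounded, progressively measurable $\cL^2(U;H)$-valued integrand, so the Burkholder--Davis--Gundy inequality applies directly. The first step is to introduce the deterministic bounded operator
\[
L_\varepsilon(u) := S_A(u) \int_0^u (u-r)^{m-1} S_{\varepsilon G}(r)\,dr \in \cL(H),
\]
(boundedness follows by Minkowski's inequality for the Bochner integral together with the exponential bounds on $S_A$ and $S_{\varepsilon G}$), and to rewrite
\[
R^3_{m,\varepsilon}(t) = \int_0^t L_\varepsilon(t-s) \bigl( G^m B(s) \bigr)\,dW(s).
\]
This factorisation is legitimate because $B(s) \in \cL^2(U;\dom(G^m))$ for a.e.\ $s$, so $G^m B(s) \in \cL^2(U;H)$ and composition with the bounded operator $L_\varepsilon(t-s)$ keeps the integrand in $\cL^2(U;H)$.

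The second step is to estimate the operator norm. Using $\norm{S_A(u)} \leq M_A e^{w_A u}$ and $\norm{S_{\varepsilon G}(r)} \leq M_G e^{\varepsilon w_G r}$ and Minkowski in $r$, one obtains $\norm{L_\varepsilon(u)} \leq M_A M_G f_\varepsilon(u) \leq M_A M_G f^*_\varepsilon(t)$ for every $u \in [0,t]$, where $f_\varepsilon$ is the function in \eqref{eq:fe}.

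The third step is to apply BDG together with the ideal property of Hilbert--Schmidt operators, which gives
\[
\E\norm{R^3_{m,\varepsilon}(t)}^p \lesssim \bigl( M_A M_G f^*_\varepsilon(t) \bigr)^p \, \E\biggl( \int_0^t \norm[\big]{G^m B(s)}^2_{\cL^2(U;H)}\,ds \biggr)^{p/2},
\]
and then to invoke $\norm{G^m B(s)}_{\cL^2(U;H)} \leq \norm{B(s)}_{\cL^2(U;\dom(G^m))}$ to recover the claimed inequality. The main conceptual hurdle, flagged by the authors just above the statement, is that the ``non-semigroup'' structure of the integrand would preclude a \emph{maximal} inequality of the type in Lemma~\ref{lm:XX}; since here we only need a pointwise-in-$t$ estimate, however, that obstruction does not arise, and the proof reduces to an application of BDG with the deterministic operator norms handled as above.
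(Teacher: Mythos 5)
Your proof is correct and is essentially the paper's own argument: both rest on the Burkholder--Davis--Gundy inequality plus the ideal property of Hilbert--Schmidt operators, with the deterministic operator kernel bounded by $M_AM_G f_\varepsilon(t-s) \leq M_AM_G f^*_\varepsilon(t)$; your only deviation is to apply BDG directly at the fixed time $t$ (once $t$ is frozen, $R^3_{m,\varepsilon}(t)$ is the terminal value of a genuine local martingale with progressively measurable $\cL^2(U;H)$-valued integrand), thereby dispensing with the paper's $\sup_{t_0 \in [t,t+\delta]}$ and $\delta \to 0$ scaffolding, which is indeed superfluous for a pointwise-in-$t$ estimate and only streamlines the presentation. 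One caveat: exactly as in the paper's proof (whose final display reads $\lesssim$, not $\leq$), your argument yields the stated bound only up to the BDG constant $N_p$ --- consistent with the explicit appearance of $N_p$ in front of the $B$-term in the theorem that follows --- so the bare $\leq$ in the lemma's statement should be read with that constant understood, and this is a defect of the statement rather than of your proof.
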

\begin{proof}
  We shall use an argument analogous to the one used in the proof of
  Lemma~\ref{lm:XX}. Write
  \[
  C(t,s) := S_A(t-s) \int_0^{t-s} (t-s-r)^{m-1} S_{\varepsilon G}(r)
  G^mB(s)\,dr,
  \]
  so that $R^3_{m,\varepsilon}(t) = \bigl( C(t,\cdot) \cdot W
  \bigr)_t$. Then
  \[
  \norm[\big]{R^3_{m,\varepsilon}(t)}_{L^p(\Omega;H)} \leq
  \sup_{t_0\in[t,t+\delta]} \norm[\big]{\bigl( C(t_0,\cdot) \cdot W
    \bigr)_t}_{L^p(\Omega;H)},
  \]
  where
  \[
  \norm[\big]{\bigl( C(t_0,\cdot) \cdot W \bigr)_t}_{L^p(\Omega;H)}
  \lesssim \norm[\big]{C(t_0,\cdot)}_{L^p(\Omega;L^2(0,t;\cL^2(U;H)))}
  \]
  and
  \begin{align*}
    \norm[\big]{C(t_0,s)}_{\cL^2(U;H)} &\leq M_AM_G e^{w_A(t_0-s)}
    \int_0^{t_0-s} (t_0-s-r)^{m-1} e^{\varepsilon w_G r}
    \norm[\big]{B(s)}_{\cL^2(U;\dom(G^m))}\,dr,
  \end{align*}
  where
  \begin{gather*}
    \exp(w_A(t_0-s)) \leq \exp\bigl(w_A(t+\delta 1_{\{w_A \geq 0\}}-s)\bigr),\\
    \int_0^{t_0-s} (t_0-s-r)^{m-1} e^{\varepsilon w_G r}\,dr \leq
    \int_0^{t+\delta-s} (t+\delta-s-r)^{m-1} e^{\varepsilon w_G r}\,dr
  \end{gather*}
  hence
  \begin{align*}
  \norm[\big]{C(t_0,s)}_{\cL^2(U;H)} &\leq M_AM_G
  \norm[\big]{B(s)}_{\cL^2(U;\dom(G^m))} \, \cdot\\
  &\quad \cdot \exp\bigl(w_A(t+\delta 1_{\{w_A \geq 0\}}-s)\bigr)
   \int_0^{t+\delta-s} (t+\delta-s-r)^{m-1} e^{\varepsilon w_G r}\,dr
  \end{align*}
  for all $t_0 \in [t,t+\delta]$. In particular,
  \begin{align*}
  \sup_{t_0 \in [t,t+\delta]} \norm[\big]{C(t_0,s)}_{\cL^2(U;H)}
  &\leq M_AM_G \norm[\big]{B(s)}_{\cL^2(U;\dom(G^m))} \, \cdot\\
  &\quad \cdot \exp\bigl(w_A(t+\delta 1_{\{w_A \geq 0\}}-s)\bigr)
   \int_0^{t+\delta-s} (t+\delta-s-r)^{m-1} e^{\varepsilon w_G r}\,dr.
  \end{align*}
  Moreover, setting
  \[
  f_{\varepsilon,\delta}(t) := \exp\bigl(w_A(t+\delta 1_{\{w_A \geq
    0\}})\bigr) \int_0^{t+\delta-s} (t+\delta-r)^{m-1} e^{\varepsilon
    w_G r}\,dr,
  \]
  we can write
  \begin{align*}
    \norm[\big]{R^3_{m,\varepsilon}(t)}_{L^p(\Omega;H)} &\leq
    \sup_{t_0\in[t,t+\delta]} \norm[\big]{\bigl( C(t_0,\cdot) \cdot W
      \bigr)_t}_{L^p(\Omega;H)}\\
    &\lesssim \sup_{t_0\in[t,t+\delta]}
    \norm[\big]{C(t_0,\cdot)}_{L^p(\Omega;L^2(0,t;\cL^2(U;H)))}\\
    &\leq \norm[\Big]{ \sup_{t_0\in[t,t+\delta]}
      \norm[\big]{C(t_0,\cdot)}_{\cL^2(U;\dom(G^m))}}_{L^p(\Omega;L^2(0,t))}\\
    &\leq M_AM_G \norm[\bigg]{\biggl( \int_0^t f^2_{\varepsilon,\delta}(t-s)
    \norm[\big]{B(s)}^2_{\cL^2(U;\dom(G^m))}\,ds \biggr)^{1/2}}_{L^p(\Omega)}.
  \end{align*}
  Since $\delta>0$ is arbitrary, taking the limit as $\delta \to 0$
  yields
  \begin{align*}
  \norm[\big]{R^3_{m,\varepsilon}(t)}_{L^p(\Omega;H)} &\lesssim M_AM_G
  \norm[\big]{f_\varepsilon(t-\cdot) B}_{L^p(\Omega;L^2(0,t;\cL^2(U;\dom(G^m))))}\\
  &\leq M_AM_G f^*_\varepsilon(t)
  \norm[\big]{B}_{L^p(\Omega;L^2(0,t;\cL^2(U;\dom(G^m))))}. \qedhere
  \end{align*}
\end{proof}

\begin{lemma}
\label{lm:trep}
  Let $p \geq 1$. One has, for every $\varepsilon \in [0,1]$,
  \[
  \norm[\big]{R^3_{m,\varepsilon}}_{\sC^p} \lesssim \frac{T^m}{m}
  M_A^2 M_G (e^{w_A T} \vee 1) \bigl( e^{(w_A+\varepsilon w_G)T} \vee 1 \bigr)
  \norm[\big]{B}_{L^p(\Omega;L^2(0,T;\cL^2(U;\dom(G^m))))}.
  \]
\end{lemma}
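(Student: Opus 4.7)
The plan is to recognise $R^3_{m,\varepsilon}$, despite not being a stochastic convolution itself, as an ordinary (Bochner) convolution whose integrand \emph{is} a genuine stochastic convolution driven by the product semigroup $S_{A+\varepsilon G}$. Once this is done, the bound follows from a pathwise estimate of the outer integral combined with the $\sC^p$-maximal inequality for stochastic convolutions already tacitly invoked in the remark following Proposition~\ref{prop:ina}.

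First I would pull $S_A(t-s)$ inside the inner Bochner integral and use commutativity together with the semigroup property to write, for $r\in[0,t-s]$,
\[
  S_A(t-s)\,S_{\varepsilon G}(r) \;=\; S_A(t-s-r)\,S_A(r)S_{\varepsilon G}(r) \;=\; S_A(t-s-r)\,S_{A+\varepsilon G}(r),
\]
since under the commutativity assumption the product semigroup $S_A S_{\varepsilon G}$ coincides with the semigroup generated by the closure of $A+\varepsilon G$. The change of variables $\sigma = s+r$ in the inner integral followed by the stochastic Fubini theorem (whose applicability is ensured by the integrability of $B$ in $L^p(\Omega;L^2(0,T;\cL^2(U;\dom(G^m))))$ and the uniform boundedness of both semigroup factors on $[0,T]$) then yields
\[
  R^3_{m,\varepsilon}(t) \;=\; \int_0^t (t-\sigma)^{m-1}\,S_A(t-\sigma)\,\Psi(\sigma)\,d\sigma,
  \qquad \Psi(\sigma) := \bigl(S_{A+\varepsilon G}\diamond G^m B\bigr)(\sigma).
\]

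With this representation, the rest is routine. Using $\norm{S_A(t-\sigma)} \leq M_A(e^{w_A T}\vee 1)$ and $\int_0^t (t-\sigma)^{m-1}\,d\sigma \leq T^m/m$, a crude pathwise bound gives
\[
  \sup_{t\in[0,T]}\norm{R^3_{m,\varepsilon}(t)} \;\leq\; \frac{T^m}{m}\,M_A\bigl(e^{w_A T}\vee 1\bigr)\sup_{\sigma\in[0,T]}\norm{\Psi(\sigma)},
\]
so that $\norm{R^3_{m,\varepsilon}}_{\sC^p} \leq \tfrac{T^m}{m}\,M_A(e^{w_A T}\vee 1)\,\norm{\Psi}_{\sC^p}$. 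Since commutativity gives $\norm{S_{A+\varepsilon G}(t)} \leq M_A M_G\,e^{(w_A+\varepsilon w_G)t}$, the $\sC^p$-maximal inequality for stochastic convolutions applied to $\Psi$ yields
\[
  \norm{\Psi}_{\sC^p} \;\lesssim\; M_A M_G\bigl(e^{(w_A+\varepsilon w_G)T}\vee 1\bigr)\,\norm{B}_{L^p(\Omega;L^2(0,T;\cL^2(U;\dom(G^m))))},
\]
and substituting back produces exactly the claimed estimate.

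The main obstacle is the first step: spotting that commutativity permits the factorisation $S_A(t-s)S_{\varepsilon G}(r) = S_A(t-s-r)S_{A+\varepsilon G}(r)$ and that the substitution $\sigma = s+r$ cleanly separates the ``$S_A$-kernel'' from the ``$S_{A+\varepsilon G}$-kernel'' so that stochastic Fubini converts the non-convolution object into the composition of an ordinary convolution with a standard stochastic convolution. Without this rewriting one is stuck with a ``nonlinear'' stochastic convolution for which, as noted before the lemma, no direct maximal inequality is available; verifying the integrability conditions for stochastic Fubini is straightforward given the standing assumption $B \in L^p(\Omega;L^2(0,T;\cL^2(U;\dom(G^m))))$.
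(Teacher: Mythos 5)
Your key rewriting is sound, and it is in fact the mirror image of the paper's own first step: both arguments use the stochastic Fubini theorem together with the identity $S_{A+\varepsilon G}=S_AS_{\varepsilon G}$ to factor $R^3_{m,\varepsilon}$ into an ordinary convolution composed with a genuine stochastic convolution. The paper places the weight $(t-s)^{m-1}$ \emph{inside} the stochastic integral, writing $R^3_{m,\varepsilon}=S_{A+\varepsilon G}\ast\Phi$ with
\[
\Phi(t)=\int_0^t(t-s)^{m-1}S_A(t-s)G^mB(s)\,dW(s),
\]
whereas you place it on the outer Bochner integral, writing $R^3_{m,\varepsilon}(t)=\int_0^t(t-\sigma)^{m-1}S_A(t-\sigma)\Psi(\sigma)\,d\sigma$ with $\Psi=S_{A+\varepsilon G}\diamond G^mB$. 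Up to this point your computation is correct.

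The gap is in the final step, the appeal to a ``$\sC^p$-maximal inequality'' for $\Psi$. As announced in the paragraph preceding Lemma~\ref{lm:ino}, the remainder estimates are deliberately proved \emph{without} dissipativity: only $\norm{S_A(t)}\leq M_Ae^{w_At}$ and $\norm{S_G(t)}\leq M_Ge^{w_Gt}$ with $M_A,M_G\geq 1$ are assumed, so $S_{A+\varepsilon G}$ need not be (quasi-)contractive. For a general $C_0$-semigroup whose bound has $M>1$, no maximal inequality of the form $\norm{S\diamond C}_{\sC^p}\lesssim\norm{C}_{L^p(\Omega;L^2(0,T;\cL^2(U;H)))}$ is available: the dilation-based results require contractivity (or quasi-contractivity, which reduces to it by rescaling with $e^{-wt}$), and the factorization method requires $p>2$ and produces an $L^p$-in-time norm on the right-hand side; even the pathwise continuity of $\Psi$, needed for $\norm{\Psi}_{\sC^p}$ to make sense, is not guaranteed in this generality. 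Under the standing assumptions of the section ($A$, $G$ maximal dissipative, hence $S_{A+\varepsilon G}$ contractive) your argument does go through for every $p>0$, but it does not prove the lemma as stated. Your decomposition can nevertheless be repaired without any maximal inequality: bound $(t-\sigma)^{m-1}\leq T^{m-1}$ and $\norm{S_A(t-\sigma)}\leq M_A(e^{w_AT}\vee 1)$ pathwise, use Minkowski's integral inequality to reduce matters to the fixed-time norms $\norm{\Psi(\sigma)}_{L^p(\Omega;H)}$, and estimate these via Lemma~\ref{lm:XX} with $n=0$ applied to the semigroup $S_{A+\varepsilon G}$; this yields the claimed bound with $T^m$ in place of $T^m/m$. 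This is exactly the device the paper employs: keeping the weight inside $\Phi$ lets Lemma~\ref{lm:XX} absorb the factor $(t-s)^{m-1}$, so that only fixed-time Burkholder--Davis--Gundy estimates plus Minkowski's inequality are needed, which is why the paper's proof never invokes a maximal inequality and also recovers the slightly sharper constant $T^m/m$.
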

\begin{proof}
  Thanks to the stochastic Fubini theorem, $R^3_{m,\varepsilon}(t)$
  can be written as
  \begin{align*}
    &\int_0^t S_{\varepsilon G}(r) \int_0^{t-r}
    (t-r-s)^{m-1} S_A(t-s) G^m B(s)\,dW(s)\,dr\\
    &\hspace{3em} = \int_0^t S_{A+\varepsilon G}(r) \int_0^{t-r}
    (t-r-s)^{m-1} S_A(t-r-s) G^mB(s)\,dW(s)\,dr,
  \end{align*}
  thus also, setting
  \[
  \Phi(t) := \int_0^t (t-s)^{m-1} S_A(t-s) G^m B(s)\,dW(s) \qquad
  \forall t \in [0,T],
  \]
  as $S_{A+\varepsilon G} \ast \Phi(t)$, with
  \begin{align*}
  \norm[\big]{S_{A+\varepsilon G} \ast \Phi(t)} &\leq M_AM_G
  \int_0^t e^{w_A(t-s)} e^{\varepsilon w_G(t-s)} \norm[\big]{\Phi(s)}\,ds\\
  &\leq M_AM_G \bigl( e^{(w_A+\varepsilon w_G)T} \vee 1 \bigr)
  \int_0^T \norm[\big]{\Phi(t)}\,dt.
  \end{align*}
  Minkowski's inequality yields
  \[
  \norm[\big]{S_{A+\varepsilon G} \ast \Phi}_{\sC^p}
  \leq M_AM_G \bigl( e^{(w_A+\varepsilon w_G)T} \vee 1 \bigr)
  \int_0^T \norm[\big]{\Phi(t)}_{L^p(\Omega;H)}\,dt,
  \]
  where, by Lemma~\ref{lm:XX},
  \begin{align*}
    \norm[\big]{\Phi(t)}_{L^p(\Omega;H)} &\lesssim
    M_A \norm[\big]{(t-\cdot)^{m-1} e^{w_A(t-\cdot)}
    \norm{B}_{\cL^2(U;\dom(G^m))}}_{L^p(\Omega;L^2(0,t))}\\
    &\leq M_A t^{m-1}(e^{w_A t} \vee 1)
    \norm[\big]{B}_{L^p(\Omega;L^2(0,t;\cL^2(U;\dom(G^m))))},
  \end{align*}
  hence
\[
\int_0^T \norm[\big]{\Phi(t)}_{L^p(\Omega;H)}\,dt \lesssim
\norm[\big]{B}_{L^p(\Omega;L^2(0,T;\cL^2(U;\dom(G^m))))} (e^{w_A T} \vee 1) 
\int_0^T t^{m-1}\,dt,
\]
therefore
\[
\norm[\big]{R^3_{m,\varepsilon}}_{\sC^p} \lesssim \frac{T^m}{m}
M_A^2 M_G (e^{w_A T} \vee 1) \bigl( e^{(w_A+\varepsilon w_G)T} \vee 1 \bigr)
\norm[\big]{B}_{L^p(\Omega;L^2(0,T;\cL^2(U;\dom(G^m))))}. \qedhere
\]
\end{proof}

Let us denote the constant in the Burkholder-Davis-Gundy inequality
with power $p>0$ by $N_p$.
\begin{thm}
  One has, for every $t \in [0,T]$,
  \begin{align*}
  &\norm[\big]{R_{m,\varepsilon}(t)}_{L^p(\Omega;H)} \leq
  \frac{\varepsilon^m}{(m-1)!} M_AM_G \Bigl(%
  f_\varepsilon(t) \norm[\big]{u_0}_{L^p(\Omega;\dom(G^m))} \\
  &\hspace{5em} + f^*_\varepsilon(t)
  \norm[\big]{\alpha}_{L^p(\Omega;L^1(0,t;\dom(G^m)))} %
  + N_p f^*_\varepsilon(t)
  \norm[\big]{B}_{L^p(\Omega;L^2(0,t;\cL^2(U;\dom(G^m))))} \Bigr).
  \end{align*}
  Moreover, if $p \geq 1$, one has
  \begin{align*}
  &\norm[\big]{R_{m,\varepsilon}}_{\sC^p} \leq
  \frac{\varepsilon^m}{(m-1)!} M_AM_G \Bigl(%
  f^*_\varepsilon(T) \norm[\big]{u_0}_{L^p(\Omega;\dom(G^m))} \\
  &\hspace{5em} + f^*_\varepsilon(T)
  \norm[\big]{\alpha}_{L^p(\Omega;L^1(0,t;\dom(G^m)))} \\
  &\hspace{5em} + N_p \frac{T^m}{m} M_A
  (e^{w_A T} \vee 1) \bigl( e^{(w_A+\varepsilon w_G)T} \vee 1 \bigr)
  \norm[\big]{B}_{L^p(\Omega;L^2(0,T;\cL^2(U;\dom(G^m))))} \Bigr).
  \end{align*}
  In particular,
  \[
  \lim_{\varepsilon \to 0}
  \frac{\norm[\big]{R_{m,\varepsilon}}_{\sC^p}}%
  {\varepsilon^{m-1}} = 0.
  \]
\end{thm}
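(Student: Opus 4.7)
The theorem is essentially the bookkeeping conclusion of the preceding sequence of lemmas, so my plan is short.

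For the pointwise-in-time estimate, I would start from the decomposition
\[
R_{m,\varepsilon}(t) = \frac{\varepsilon^m}{(m-1)!}\bigl(R^1_{m,\varepsilon}(t)+R^2_{m,\varepsilon}(t)+R^3_{m,\varepsilon}(t)\bigr)
\]
recorded in \eqref{eq:rm}, apply the triangle inequality in $L^p(\Omega;H)$, and plug in the three pointwise bounds proved in the previous lemmas (for $R^1_{m,\varepsilon}$ and $R^2_{m,\varepsilon}$ these are proved directly, and for $R^3_{m,\varepsilon}$ by the approximation-through-right-continuity trick imitating Lemma~\ref{lm:XX}). The constant $N_p$ is the Burkholder--Davis--Gundy constant that had been absorbed into the $\lesssim$ in the bound for $R^3_{m,\varepsilon}$, now made explicit in front of the $B$-term.

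For the $\sC^p$ estimate I would do the same, using now Lemma~\ref{lm:ri} for the $R^1_{m,\varepsilon}$ and $R^2_{m,\varepsilon}$ terms and Lemma~\ref{lm:trep} (which requires $p\geq1$, as hypothesized) for the $R^3_{m,\varepsilon}$ term. Minor care is needed because the bound on $R^2_{m,\varepsilon}$ in $\sC^p$ involves $f^*_\varepsilon(T)$ and the $L^1(0,T)$ norm of $\alpha$, while the corresponding pointwise bound uses $f^*_\varepsilon(t)$ and the $L^1(0,t)$ norm; but both come straight from Lemma~\ref{lm:ri}. Again the $L^p$-BDG constant $N_p$ is pulled out of the $\lesssim$ in Lemma~\ref{lm:trep}.

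For the limit statement, the point is that the bracket multiplying $\varepsilon^m/(m-1)!$ in the $\sC^p$ estimate remains uniformly bounded as $\varepsilon\to 0$. Indeed, by dominated convergence
\[
\lim_{\varepsilon\to 0} f_\varepsilon(t) = e^{w_A t}\,\frac{t^m}{m},
\]
and $f_\varepsilon$ depends continuously on $\varepsilon\in[0,1]$, so $\sup_{\varepsilon\in[0,1]}f^*_\varepsilon(T)<\infty$; likewise $\bigl(e^{(w_A+\varepsilon w_G)T}\vee 1\bigr)\leq e^{(w_A^+ +w_G^+)T}\vee 1$ is uniformly bounded on $\varepsilon\in[0,1]$. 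Consequently $\norm{R_{m,\varepsilon}}_{\sC^p}\leq K\,\varepsilon^m$ for a constant $K$ independent of $\varepsilon$, which gives
\[
\frac{\norm{R_{m,\varepsilon}}_{\sC^p}}{\varepsilon^{m-1}}\leq K\varepsilon\longrightarrow 0.
\]

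The main (and only real) obstacle is the uniform-in-$\varepsilon$ boundedness of $f^*_\varepsilon(T)$, but this is immediate from the explicit formula \eqref{eq:fe}; apart from that the proof is purely a triangle-inequality assembly of what has already been established.
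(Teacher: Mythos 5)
Your proposal is correct and matches the paper's proof, which likewise just assembles the decomposition \eqref{eq:rm} with the pointwise lemmas, Lemma~\ref{lm:ri} and Lemma~\ref{lm:trep}, and then observes that $f_\varepsilon$ and $f^*_\varepsilon$ stay bounded (converge to a finite limit) as $\varepsilon \to 0$ to get the $o(\varepsilon^{m-1})$ statement. Your explicit computation of $\lim_{\varepsilon\to 0} f_\varepsilon(t) = e^{w_A t}\,t^m/m$ and the uniform bound over $\varepsilon \in [0,1]$ are just slightly more detailed versions of the same observation.
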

\begin{proof}
  This is an immediate consequence of the previous propositions and
  lemmas in this section, upon observing that $f_\varepsilon$ and
  $f^*_\varepsilon$ converge pointwise to a finite limit as
  $\varepsilon \to 0$.
\end{proof}

\begin{rmk}
  It seems interesting to remark that, without any dissipativity
  assumption on $A$ and $G$, the previous theorem implies that, as
  soon as $m \geq 1$, one has $u_\varepsilon \to u$ in $\sC^p$ as
  $\varepsilon \to 0$ without the need to appeal to
  Theorem~\ref{thm:convsol}.
\end{rmk}

We are now going to identify the process $v_k$ as the $k$-th
derivative at zero of $\varepsilon \mapsto u_\varepsilon$. We shall
actually prove more than this, namely that $u_\varepsilon$ is $m$
times continuously differentiable with respect to $\varepsilon$.
\begin{prop}
  \label{prop:ud}
  The map $\varphi \colon \varepsilon \mapsto u_\varepsilon$ is of
  class $C^m$ from $[0,1]$ to $\sC^p$, with
  \[
  D^k\varphi(0) = v_k \qquad \forall k \in \{1,\ldots,m-1\}.
  \]
\end{prop}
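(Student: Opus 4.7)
The plan is to extend the expansion of Proposition~\ref{prop:ina} to an arbitrary base point $\varepsilon_0\in[0,1]$, identify the resulting coefficients as successive derivatives of $\varphi$, and close the argument by verifying that these derivatives form continuous families in $\varepsilon$. For $\varepsilon\in[0,1]$ and $k\in\{0,\ldots,m\}$ set
\[
v_k^\varepsilon(t) := t^k S_{A+\varepsilon G}(t) G^k u_0 + \int_0^t (t-s)^k S_{A+\varepsilon G}(t-s) G^k \alpha(s)\,ds + \int_0^t (t-s)^k S_{A+\varepsilon G}(t-s) G^k B(s)\,dW(s),
\]
so that $v_0^\varepsilon=u_\varepsilon$ and $v_k^0=v_k$ (the processes $v_k^\varepsilon$ belong to $\sC^p$ by the estimates sketched in the remark after Proposition~\ref{prop:ina}). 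The proof of Proposition~\ref{prop:ina} applies unchanged after replacing the semigroup $S_A$ by $S_{A+\varepsilon_0 G}$ and the parameter $\varepsilon$ by $h$, because $S_{A+\varepsilon_0 G}=S_A S_{\varepsilon_0 G}$ still commutes with $S_G$; combined with the remainder estimates derived above, this yields
\[
u_{\varepsilon_0+h} = u_{\varepsilon_0} + \sum_{k=1}^{m-1} \frac{h^k}{k!} v_k^{\varepsilon_0} + R_{m,\varepsilon_0,h}, \qquad \|R_{m,\varepsilon_0,h}\|_{\sC^p} = O(h^m),
\]
for every $\varepsilon_0\in[0,1]$ and every $h$ with $\varepsilon_0+h\in[0,1]$.

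Next I would show that for each $k\leq m$ the map $\varepsilon\mapsto v_k^\varepsilon$ is continuous from $[0,1]$ into $\sC^p$, and that for $k\leq m-1$ it is differentiable with derivative $v_{k+1}^\varepsilon$. Continuity follows from the factorisation $S_{A+\varepsilon G}(t)=S_A(t) S_G(\varepsilon t)$, strong continuity of $\varepsilon\mapsto S_G(\varepsilon t)\phi$ for each $\phi\in H$, the uniform operator bound $\|S_{A+\varepsilon G}(t)\|\leq M_A M_G \exp((|w_A|+|w_G|)T)$ on $[0,1]\times[0,T]$, and dominated convergence; for the stochastic convolution term one first invokes Burkholder--Davis--Gundy to reduce $\sC^p$-convergence to convergence of the integrand in $L^p(\Omega;L^2(0,T;\cL^2(U;H)))$, and then applies the same dominated-convergence argument. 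For differentiability one writes $S_{A+(\varepsilon+h)G}(t)=S_{A+\varepsilon G}(t) S_{hG}(t)$ and uses
\[
\bigl(S_{hG}(t)-I\bigr)\psi = h t\int_0^1 S_{\eta h G}(t)\, G\psi\,d\eta,
\qquad \psi\in\dom(G),
\]
applied to $\psi=G^k u_0$, $\psi=G^k\alpha(s)$, and (componentwise) to $G^k B(s)$, all of which lie in $\dom(G)$ provided $k\leq m-1$. Dividing the resulting difference $v_k^{\varepsilon+h}-v_k^\varepsilon$ by $h$ and letting $h\to 0$ (again via BDG and dominated convergence for the stochastic term) produces the three summands defining $v_{k+1}^\varepsilon$.

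Combining these two facts by induction on $k$ shows that $\varphi=v_0^\cdot$ is $m$ times differentiable on $[0,1]$ with $D^k\varphi(\varepsilon)=v_k^\varepsilon$, and continuity of $\varepsilon\mapsto v_m^\varepsilon$ upgrades this to $\varphi\in C^m([0,1];\sC^p)$; in particular $D^k\varphi(0)=v_k^0=v_k$ for $k\in\{1,\ldots,m-1\}$. The main obstacle is the interchange of the $\varepsilon$-limit with the stochastic integral in both the continuity and the differentiability arguments; the resolution via BDG, uniform semigroup bounds, and strong continuity of $\varepsilon\mapsto S_G(\varepsilon t)\phi$ is routine but is the only non-algebraic ingredient in the proof.
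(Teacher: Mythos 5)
Your overall architecture (the family $v_k^\varepsilon$, induction on the order of the derivative, continuity of the top coefficient to upgrade to $C^m$) is the same as the paper's, but the analytic core of your argument breaks down at precisely the point the paper flags as the main difficulty. For both the continuity and the differentiability of the stochastic terms you claim that Burkholder--Davis--Gundy ``reduces $\sC^p$-convergence to convergence of the integrand in $L^p(\Omega;L^2(0,T;\cL^2(U;H)))$''. It does not: the relevant processes, e.g.
\[
  t \longmapsto \int_0^t (t-s)^{k+1}\, S_{A+\varepsilon G}(t-s)
  \Bigl( \int_0^1 S_{\eta h G}(t-s)\,d\eta \Bigr) G^{k+1}B(s)\,dW(s),
\]
have $t$-dependent kernels, hence are not martingales in $t$, so BDG yields only pointwise-in-$t$ bounds in $L^p(\Omega;H)$, whereas the $\sC^p$ norm carries $\sup_{t \leq T}$ inside the expectation. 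No maximal inequality is available for such integrals --- the paper states this explicitly when estimating $R^3_{m,\varepsilon}$ (``we are not aware of any maximal inequalities for such `nonlinear' stochastic convolutions''). This is exactly why the paper's proof of Proposition~\ref{prop:ud} does not proceed by dominated convergence on the integrand: it uses Duhamel's formula to express the difference quotient of the stochastic convolution as a deterministic time convolution $S_\varepsilon \ast (\cdot)$ of a \emph{genuine} stochastic convolution, invokes Theorem~\ref{thm:convsol} for the $\sC^p$-continuity in $\varepsilon$ of $S_{\varepsilon+h} \diamond G^kB$, and identifies the limit through the stochastic Fubini identity $\Sigma_{k+1}=(k+1)\,S \ast \Sigma_k$ of Lemma~\ref{lm:desc} (equivalently, the representation $y^k_\varepsilon = k!\, S_\varepsilon^{\ast k} S_\varepsilon \diamond G^kB$). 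Your argument could be repaired along the same lines --- for instance, a stochastic Fubini in $\eta$ in the display above, followed by Lemma~\ref{lm:desc}, reduces everything to iterated time convolutions of true stochastic convolutions --- but these devices are the missing idea; they are not a routine application of BDG and dominated convergence, which is all you offer.

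A secondary point: $S_{hG}$ and your identity $(S_{hG}(t)-I)\psi = ht\int_0^1 S_{\eta h G}(t)\,G\psi\,d\eta$ only make sense for $h \geq 0$, since $G$ generates a semigroup, not a group. As written, your base-point expansion of $u_{\varepsilon_0+h}$ and your difference-quotient computation therefore produce only right derivatives; for $h<0$ one must factor $S_{A+\varepsilon_0 G} = S_{A+(\varepsilon_0+h)G}S_{(-h)G}$, which puts the coefficients at the moving point $\varepsilon_0+h$ and then requires the continuity in $\varepsilon$ of the coefficients to conclude. (The paper's Duhamel identity $y_{\varepsilon+h} = h\,S_\varepsilon \ast Gy_{\varepsilon+h} + y_\varepsilon$ is valid for both signs of $h$, which is one more reason that route is preferable.) Note also that your base-point Taylor expansion is redundant in your own scheme: differentiability of $v_0^\varepsilon = u_\varepsilon$ with derivative $v_1^\varepsilon$ is just the case $k=0$ of your second step.
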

\begin{proof}
  Let $\varepsilon \in [0,1]$ and $h \in \erre$ be such that
  $\varepsilon+h \in [0,1]$. We begin by establishing first-order
  continuous differentiability. One has
  \begin{align*}
    u_{\varepsilon+h}(t)-u_\varepsilon(t)
    &= S_{A+\varepsilon G}(t) \bigl( S_{hG}(t)u_0-u_0 \bigr)\\
    &\quad + \int_0^t S_{A+\varepsilon G}(t-s) \bigl( S_{hG}(t-s)\alpha(s)
      - \alpha(s) \bigr)\,ds\\
    &\quad + \int_0^t S_{A+\varepsilon G}(t-s) \bigl( S_{hG}(t-s)B(s)
      - B(s) \bigr)\,dW(s),    
  \end{align*}
  where, recalling that $S_{hG} = S_G(h\,\cdot)$,
  \[
    \lim_{h \to 0}\frac{S_{hG}(t)u_0-u_0}{h}
    = t \lim_{h \to 0} \frac{S_G(ht)u_0-u_0}{ht} = tGu_0
  \]
  for every $t \in [0,T]$, hence
  \[
    \lim_{h \to 0} \frac{S_{A+\varepsilon G} \bigl( S_{hG}u_0-u_0 \bigr)}{h}
    = [t \mapsto tGu_0]
  \]
  in $\sC^p$ by dominated convergence.
  Similarly, one has
  \begin{align*}
    \lim_{h \to 0}\frac{S_{hG}(t-s)\alpha(s) - \alpha(s)}{h}
    &= (t-s) \lim_{h \to 0}
      \frac{S_G(h(t-s))\alpha(s) - \alpha(s)}{h(t-s)}\\
    &= (t-s) G\alpha(s)
  \end{align*}
  for all $s$, $t \in [0,T]$ with $s \leq t$, hence, again by
  dominated convergence,
  \begin{align*}
    &\lim_{h \to 0} \frac{1}{h} \int_0^\cdot S_{A+\varepsilon G}(\cdot-s)
    \bigl( S_{hG}(\cdot-s)\alpha(s) - \alpha(s) \bigr)\,ds\\
    &\hspace{5em} = \int_0^\cdot S_{A+\varepsilon G}(\cdot-s) (\cdot-s)
      G\alpha(s)\,ds
  \end{align*}
  in $\sC^p$.
  The stochastic convolution term cannot be treated the same way and
  requires more work. We shall write, for simplicity of notation,
  $S_\varepsilon$ in place of $S_{A+\varepsilon G}$. Introducing the
  processes $y_\varepsilon=y^0_\varepsilon$ and $y^1_\varepsilon$ defined as
  \begin{align*}
    y_{\varepsilon}(t) &:= \int_0^t S_{\varepsilon}(t-s)B(s)\,dW(s),\\
    y^1_{\varepsilon}(t) &:= \int_0^t S_{\varepsilon}(t-s) (t-s) GB(s)\,dW(s),
  \end{align*}
  we need to show that
  \begin{equation}
    \label{eq:yeya}
    \lim_{h \to 0} \frac{y_{\varepsilon+h} - y_\varepsilon}{h} =
    y^1_\varepsilon \qquad \text{in } \sC^p.
  \end{equation}
  Duhamel's formula yields
  \begin{align*}
    y_{\varepsilon+h}(t) 
    &= h\int_0^t S_\varepsilon(t-s) Gy_{\varepsilon+h}(s)\,ds +
    \int_0^t S_{\varepsilon}(t-s) B(s)\,dW(s)\\
    &= h\int_0^t S_\varepsilon(t-s) Gy_{\varepsilon+h}(s)\,ds
    + y_\varepsilon(t),
  \end{align*}
  hence
  \begin{align*}
    \frac{y_{\varepsilon+h}(t) - y_\varepsilon(t)}{h} 
    &= \int_0^t S_{\varepsilon}(t-s) Gy_{\varepsilon+h}(s)\,ds\\
    &= \int_0^t S_{\varepsilon}(t-s)
      \int_0^s S_{\varepsilon+h}(s-r) GB(r)\,dW(r)\,ds.
  \end{align*}
  Since $S_{\varepsilon+h} \diamond GB$ converges to
  $S_{\varepsilon} \diamond GB$ in $\sC^p$ as $h \to 0$ by
  Theorem~\ref{thm:convsol}, it follows by dominated convergence that
  \[
  \lim_{h \to 0} \frac{y_{\varepsilon+h} - y_\varepsilon}{h} =
  \int_0^\cdot S_{\varepsilon}(\cdot - s)
      \int_0^s S_{\varepsilon}(s-r) GB(r)\,dW(r)\,ds
  \qquad \text{in } \sC^p.
  \]
  Moreover, by Lemma~\ref{lm:desc},
  \[
  \int_0^t S_{\varepsilon}(t-s) \int_0^s S_{\varepsilon}(s-r)
  GB(r)\,dW(r)\,ds = \int_0^t S_{\varepsilon}(t-s) (t-s)GB(s)\,dW(s)
  = y^1_\varepsilon(t),
  \]
  thus \eqref{eq:yeya} is proved. Furthermore, it follows by the
  assumptions on $B$ that the same argument also
  yields the stronger statement
  \begin{equation}
    \label{eq:yeg}
    \lim_{h \to 0} \frac{G^jy_{\varepsilon+h}-G^jy_{\varepsilon}}{h}
    = G^jy^1_{\varepsilon} \qquad \text{in } \sC^p
    \qquad \forall 0 \leq j \leq m-1
  \end{equation}
  (with $j$ integer). Let us turn to higher-order derivatives. We
  shall only consider the term involving the stochastic convolution,
  as the terms involving the initial datum and the deterministic
  convolution can be treated in a completely analogous (in fact
  easier) way.
  We need to show that the $k$-derivative of
  $\varepsilon \mapsto y_\varepsilon$, denoted by $y^{(k)}$, satisfies
  \[
    y^{(k)}_\varepsilon(t) = \int_0^t S_{A+\varepsilon G} (t-s)
    (t-s)^k G^k B(s)\,dW(s) =: y^k_\varepsilon(t)
  \]
  for all $k \geq 2$, as the case $k=1$ has just been proved.
  We begin with some preparations. Lemma~\ref{lm:desc} implies that
  \begin{equation}
    \label{eq:recia}
    y^k_\varepsilon
    = k S_\varepsilon \ast Gy^{k-1}_\varepsilon
    = k! \, S_\varepsilon^{\ast k} G^k y_\varepsilon
    = k! \, S_\varepsilon^{\ast k} S_\varepsilon \diamond G^kB
  \end{equation}
  for every $k \in \{1,\ldots,m\}$, where $S_\varepsilon^{\ast k}$
  denotes the operation of $k$ times convolution with $S_\varepsilon$,
  i.e.
  \[
    S_\varepsilon^{\ast 1} \phi := S_\varepsilon \ast \phi, \qquad
    S_\varepsilon^{\ast k} \phi = S_\varepsilon \ast
    (S_\varepsilon^{\ast(k-1)} \phi).
  \]
  It follows by a repeated application of Theorem~\ref{thm:convsol}
  that $G^jy^k_{\varepsilon+h} \to G^jy^k_\varepsilon$ in $\sC^p$ as
  $h \to 0$ for all $j$, $k \in \mathbb{N}$ with $j+k \leq m$.
  We shall now proceed by induction, i.e. we are going to prove that,
  for any $\varepsilon \in [0,1]$,
  $y_\varepsilon^{(k)}=y_\varepsilon^k$ implies
  $y_\varepsilon^{(k+1)}=y_\varepsilon^{k+1}$.
  Since
  $y^k_{\varepsilon+h} = k S_{\varepsilon+h} \ast
  Gy^{k-1}_{\varepsilon+h}$, Duhamel's formula yields, setting
  $z:= y^k_{\varepsilon+h}/k$,
  \[
    z(t) = h \int_0^t S_\varepsilon(t-s) Gz(s)\,ds
    + \int_0^t S_\varepsilon(t-s) Gy_{\varepsilon+h}^{k-1}(s)\,ds,
  \]
  therefore, by the identity
  $y^k_\varepsilon = k S_\varepsilon \ast Gy^{k-1}_\varepsilon$,
  \begin{align*}
    y^k_{\varepsilon+h}(t) - y_\varepsilon^k(t)
    &= k \biggl( h\int_0^t S_\varepsilon(t-s)Gz(s)\,ds\\
    &\hspace{5em} + \int_0^t S_\varepsilon(t-s) \bigl(
      Gy_{\varepsilon+h}^{k-1}(s) - Gy_\varepsilon^{k-1}(s) \bigr)\,ds \biggr)\\
    &= h \int_0^t S_\varepsilon(t-s)Gy^k_{\varepsilon+h}(s)\,ds\\
    &\hspace{5em} + k \int_0^t S_\varepsilon(t-s) \bigl(
      Gy_{\varepsilon+h}^{k-1}(s) - Gy_\varepsilon^{k-1}(s) \bigr)\,ds,
  \end{align*}
  hence
  \begin{align*}
    \frac{y^k_{\varepsilon+h}(t) - y_\varepsilon^k(t)}{h}
    &= \int_0^t S_\varepsilon(t-s)Gy^k_{\varepsilon+h}(s)\,ds\\
    &\hspace{5em} + k \int_0^t S_\varepsilon(t-s) G
      \frac{y_{\varepsilon+h}^{k-1}(s) - y_\varepsilon^{k-1}(s)}{h} \,ds,
  \end{align*}
  where, as discussed above,
  $Gy^k_{\varepsilon+h} \to Gy^k_\varepsilon$ in $\sC^p$ as $h \to 0$,
  so that, by dominated convergence,
  \[
    \lim_{h \to 0} \int_0^\cdot S_\varepsilon(\cdot-s)
    Gy^k_{\varepsilon+h}(s)\,ds = \int_0^\cdot S_\varepsilon(\cdot-s)
    Gy^k_\varepsilon(s)\,ds
  \]
  in $\sC^p$. The inductive assumption means that
  \[
    \lim_{h \to 0} \frac{y^{(k-1)}_{\varepsilon+h}-y^{(k-1)}_{\varepsilon}}{h}
    = \lim_{h \to 0} \frac{y^{k-1}_{\varepsilon+h}-y^{k-1}_{\varepsilon}}{h}
    = y^k_\varepsilon
  \]
  in $\sC^p$ for every $\varepsilon \in [0,1]$. The assumptions on $B$
  and \eqref{eq:recia} imply that the inductive assumption also
  yields, in complete analogy to the argument leading to
  \eqref{eq:yeg}, that
  \[
    \lim_{h \to 0}
    \frac{G^jy^{k-1}_{\varepsilon+h} - G^jy^{k-1}_{\varepsilon}}{h} =
    G^jy^k_\varepsilon
  \]
  for every positive integer $j$ such that $j+k \leq m$.
  Therefore, again by dominated convergence, we have
  \[
    \lim_{h \to 0} \int_0^\cdot S_\varepsilon(\cdot - s)
    G \frac{y_{\varepsilon+h}^{k-1}(s) - y_\varepsilon^{k-1}(s)}{h}\,ds
    = \int_0^\cdot S_\varepsilon(\cdot - s) Gy^{k}_\varepsilon(s)\,ds
  \]
  in $\sC^p$, hence we conclude that
  \[
    \lim_{h \to 0} \frac{y^{(k)}_{\varepsilon+h} - y_\varepsilon^{(k)}}{h}
    = \lim_{h \to 0} \frac{y^k_{\varepsilon+h} - y_\varepsilon^k}{h}\\
    = (k+1) S_\varepsilon \ast Gy_\varepsilon^k
    = y_\varepsilon^{k+1},
  \]
  thus concluding the proof of the induction step.
\end{proof}

\subsection{Asymptotic expansion of functionals of $u_\varepsilon$}
We are now going to consider asymptotic expansions of processes of the
type $F(u_\varepsilon)$, where $F$ is a functional taking values in a
Banach space. All assumptions stated at the beginning of the sections
are still in force.

We begin with a simple case.
\begin{prop}
  Let $E$ be a Banach space and $F\colon \sC^p \to E$ be of class
  $C^{m-1}$, $m \geq 2$. Then there exist $w_1,\ldots,w_{m-2} \in E$
  and $R_{m-1,\varepsilon} \in E$ such that, for every
  $\varepsilon \in \mathopen]0,\delta]$,
  \[
  F(u_\varepsilon) = F(u) + \sum_{n=1}^{m-2} \frac{\varepsilon^n}{n!} w_n +
  R_{m-1,\varepsilon},
  \]
  where
  \begin{equation}
  \label{eq:wn}
  w_n = \sum_{j=1}^n \sum_{\substack{k_1+\dots+k_n=j\\%
      k_1+2k_2+\dots + nk_n =n}} \frac{n!}{k_1! \cdots k_n!}
  D^jF(u) \Bigl( \bigl(v_1/1!\bigr)^{\otimes k_1},\ldots,%
  \bigl(v_n/n!\bigr)^{\otimes k_n} \Bigr)
  \end{equation}
  and
  \[
  \lim_{\varepsilon \to 0} \frac{R_{m-1,\varepsilon}}{\varepsilon^{m-2}} = 0
  \]
\end{prop}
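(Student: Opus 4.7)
The plan is to apply Taylor's formula to the composition $F\circ\varphi$, where $\varphi\colon\varepsilon\mapsto u_\varepsilon$ is the map studied in Proposition~\ref{prop:ud}. That proposition tells us that $\varphi$ is of class $C^m$ from $[0,1]$ to $\sC^p$ with $\varphi(0)=u$ and $D^k\varphi(0)=v_k$ for $1\leq k\leq m-1$. Since $F\colon\sC^p\to E$ is of class $C^{m-1}$, the chain rule between Banach spaces implies that $F\circ\varphi\colon[0,1]\to E$ is of class $C^{m-1}$.

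The next step is to apply the Taylor-type identity recalled in \S\ref{sec:prel} to $F\circ\varphi$ at the origin, with increment $\varepsilon\in\mathopen]0,1]$. This produces
\[
F(u_\varepsilon) = \sum_{n=0}^{m-2} \frac{\varepsilon^n}{n!}\, D^n(F\circ\varphi)(0) + R_{m-1,\varepsilon},
\]
with integral remainder
\[
R_{m-1,\varepsilon} := \varepsilon^{m-1}\int_0^1 \frac{(1-t)^{m-2}}{(m-2)!}\, D^{m-1}(F\circ\varphi)(t\varepsilon)\,dt.
\]
The term $n=0$ equals $F(u)$, so the announced decomposition follows provided one can identify $D^n(F\circ\varphi)(0)$ with $w_n$ for $1\leq n\leq m-2$.

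That identification is furnished by the Fa\`a di Bruno formula for the derivatives of a composition of Banach-space-valued maps of a real variable. Since $\varphi$ has scalar argument, the formula specializes to
\[
D^n(F\circ\varphi)(0) = \sum_{j=1}^n \sum_{\substack{k_1+\cdots+k_n=j\\ k_1+2k_2+\cdots+nk_n=n}} \frac{n!}{k_1!\cdots k_n!}\, D^jF(u)\Bigl((v_1/1!)^{\otimes k_1},\ldots,(v_n/n!)^{\otimes k_n}\Bigr),
\]
where the combinatorial factor $n!/(k_1!\cdots k_n!)$ absorbs the multinomial count of partitions of $\{1,\ldots,n\}$ with $k_i$ blocks of size $i$, and the normalizations $v_i/i!$ arise from pulling the factors $1/i!^{k_i}$ inside the multilinear form. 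This is exactly \eqref{eq:wn}.

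It remains to control the remainder. Since $D^{m-1}(F\circ\varphi)$ is continuous on the compact interval $[0,1]$, it is bounded in the norm of the relevant space of multilinear forms, so $\norm{R_{m-1,\varepsilon}}_E \lesssim \varepsilon^{m-1}$ and consequently $\norm{R_{m-1,\varepsilon}}_E/\varepsilon^{m-2}\to 0$ as $\varepsilon\to 0^+$. The only mildly technical step is the combinatorial bookkeeping in Fa\`a di Bruno; all the analytic content has already been packaged into Proposition~\ref{prop:ud}, so no further obstacle arises.
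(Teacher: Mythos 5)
Your proof is correct and takes essentially the same route as the paper's: Taylor's formula with integral remainder applied to $F \circ \varphi$ (using the $C^m$ regularity of $\varphi\colon \varepsilon \mapsto u_\varepsilon$ from Proposition~\ref{prop:ud} and the chain rule to get $F \circ \varphi \in C^{m-1}$), Fa\`a di Bruno's formula to identify $D^n(F\circ\varphi)(0)$ with $w_n$, and boundedness of the continuous derivative $D^{m-1}(F\circ\varphi)$ on a compact interval to show the remainder is $O(\varepsilon^{m-1})$. Incidentally, your remainder $\varepsilon^{m-1}\int_0^1 \frac{(1-t)^{m-2}}{(m-2)!}\,D^{m-1}(F\circ\varphi)(t\varepsilon)\,dt$ silently corrects a typo in the paper's version, where the upper limit of integration is written as $t$ rather than $1$.
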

\begin{proof}
  Since $\varepsilon \mapsto u_\varepsilon$ is of class $C^m$ from
  $[0,\delta]$ to $\sC^p$ by Proposition~\ref{prop:ud}, it follows
  that $\varepsilon \mapsto F(u_\varepsilon)$ belongs to
  $C^{m-1}([0,\delta];E)$. The expression for $F(u_\varepsilon)$ then
  follows immediately by Taylor's theorem, and the expression for
  $w_n$ follows by the formula for higher derivatives of composite
  functions (sometimes called Fa\`a di Bruno's formula -- see, e.g.,
  \cite[p.~272]{Bog:TVS}). Furthermore, denoting the map
  $\varepsilon \mapsto u_\varepsilon$ by $\varphi$, one has
  \[
  R_{m-1,\varepsilon} = \int_0^t \frac{(1-t)^{m-2}}{(m-2)!}
  D^{m-1}(F \circ \varphi)(\varepsilon t) \varepsilon^{m-1}\,dt,
  \]
  where $D^{m-1}(F \circ \varphi)$ is bounded in $E$ on the compact
  interval $[0,\delta]$ because it is continuous thereon. Denoting the
  maximum of the $E$-norm of this function on $[0,\delta]$ by
  $M_\delta$, we have
  \[
  \frac{R_{m-1,\varepsilon}}{\varepsilon^{m-2}} \leq \varepsilon
  \frac{M_\delta}{(m-1)!},
  \]
  where the right-hand side obviously tends to zero as $\varepsilon
  \to 0$.
\end{proof}

We shall now assume that $F \in C^m(\sC^p;E)$ and derive an expansion
of $u_\varepsilon-u$ of order $m-1$. Note that an argument based on
Taylor's formula for $\varepsilon \mapsto F(u_\varepsilon)$, as in the
previous proposition, does not work because
$\varepsilon \mapsto u_\varepsilon$ is only of class $C^{m-1}$, hence
its composition with $F$ is also of class $C^{m-1}$. We are going to
use instead a construction based on composition of power series.
\begin{thm}
  \label{thm:ae}
  Let $E$ be a Banach space and $F\colon \sC^p \to E$ be of class
  $C^m$, $m \geq 1$. Then there exist $w_1,\ldots,w_{m-1} \in E$ and
  $R_{m,\varepsilon} \in E$ such that, for every
  $\varepsilon \in \mathopen]0,\delta]$,
  \[
  F(u_\varepsilon) = F(u) + \sum_{n=1}^{m-1} \frac{\varepsilon^n}{n!} w_n +
  R_{m,\varepsilon},
  \]
  where the $(w_n)$ are defined as in \eqref{eq:wn} and
  \[
  \lim_{\varepsilon \to 0} \frac{R_{m,\varepsilon}}{\varepsilon^{m-1}} = 0
  \]
\end{thm}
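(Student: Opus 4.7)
The plan is to leverage the $C^m$-regularity of $F$ by Taylor-expanding $F$ around $u$, rather than Taylor-expanding $\varepsilon \mapsto F(u_\varepsilon)$: the latter is only $C^{m-1}$ by Proposition~\ref{prop:ud}, so direct application of Taylor's theorem would give a remainder only of order $o(\varepsilon^{m-2})$, one order short. The extra order of smoothness of $F$ will be recovered by substituting the known expansion of $u_\varepsilon - u$ from Proposition~\ref{prop:ina} into the polynomial part of the Taylor expansion of $F$.

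First, apply Taylor's formula from \S\ref{sec:prel} to $F$ at $u$ with integral remainder of order $m$,
\[
F(u_\varepsilon) = F(u) + \sum_{k=1}^{m-1} \frac{1}{k!} D^k F(u)(u_\varepsilon - u)^{\otimes k} + \sigma_m(\varepsilon),
\]
where $\sigma_m(\varepsilon)$ involves $D^m F$ on the segment from $u$ to $u_\varepsilon$. Since $u_\varepsilon \to u$ in $\sC^p$ and $\|u_\varepsilon - u\|_{\sC^p} = O(\varepsilon)$ by Proposition~\ref{prop:ina}, continuity of $D^m F$ on a neighborhood of $u$ gives $\|\sigma_m(\varepsilon)\|_E = O(\varepsilon^m) = o(\varepsilon^{m-1})$. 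Next, substitute $u_\varepsilon - u = \eta_\varepsilon + \varrho_\varepsilon$, where $\eta_\varepsilon := \sum_{j=1}^{m-1} \frac{\varepsilon^j}{j!} v_j$ and $\varrho_\varepsilon$ is the $\sC^p$-valued remainder of Proposition~\ref{prop:ina}, with $\|\varrho_\varepsilon\|_{\sC^p} = o(\varepsilon^{m-1})$. Expand each $(u_\varepsilon - u)^{\otimes k}$ by multilinearity, separating the pure part $\eta_\varepsilon^{\otimes k}$ from mixed terms containing at least one factor $\varrho_\varepsilon$. For a mixed term with $j \geq 1$ factors of $\varrho_\varepsilon$ and $k-j$ factors of $\eta_\varepsilon$, continuity of $D^k F(u)$ yields the bound $\|\varrho_\varepsilon\|_{\sC^p}^{j}\|\eta_\varepsilon\|_{\sC^p}^{k-j} = o(\varepsilon^{(m-1)j + k-j}) = o(\varepsilon^{m-1})$, so all mixed contributions fold into the final remainder.

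It remains to analyze the pure term, which is a polynomial in $\varepsilon$:
\[
D^k F(u) \eta_\varepsilon^{\otimes k} = \sum_{j_1,\ldots,j_k=1}^{m-1}
\prod_{i=1}^k \frac{\varepsilon^{j_i}}{j_i!}\,
D^k F(u)(v_{j_1},\ldots,v_{j_k}).
\]
By symmetry of $D^k F(u)$ the sum can be reindexed by the multiplicity vector $(k_1,\ldots,k_{m-1})$ with $\sum_l k_l = k$: the multinomial count of ordered tuples $(j_1,\ldots,j_k)$ with prescribed multiplicities is $k!/\prod_l k_l!$, and the associated power of $\varepsilon$ is $\sum_l l\,k_l$. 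Collecting by $n := \sum_l l\,k_l$ and summing over $k = 1,\ldots,m-1$, the coefficient of $\varepsilon^n$ for $1 \leq n \leq m-1$ is precisely $w_n/n!$ as defined in \eqref{eq:wn}; the contributions with $\sum_l l\,k_l \geq m$ are $O(\varepsilon^m) = o(\varepsilon^{m-1})$ and again fold into the final remainder.

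The main obstacle is the combinatorial identification with \eqref{eq:wn} in the previous step; this is the same bookkeeping that proves the classical Faà di Bruno formula, and is routine once ordered index tuples are reindexed by multiplicity vectors. Putting everything together, the new remainder $R_{m,\varepsilon} \in E$ is the sum of $\sigma_m(\varepsilon)$, all $D^k F(u)$-images of the mixed terms, and the high-degree contributions from the pure parts; each summand has been shown to be $o(\varepsilon^{m-1})$ in $E$, so $\|R_{m,\varepsilon}\|_E / \varepsilon^{m-1} \to 0$ as $\varepsilon \to 0$, as claimed.
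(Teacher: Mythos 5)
Your argument is correct and is essentially the paper's own proof: Taylor's formula for $F$ at $u$ exploiting the full $C^m$ regularity of $F$ (precisely because $\varepsilon \mapsto u_\varepsilon$ is only $C^{m-1}$), substitution of the decomposition $u_\varepsilon - u = \sum_{k=1}^{m-1} \varepsilon^k v_k/k! + \varrho_\varepsilon$ from Proposition~\ref{prop:ina}, multilinearity together with the Fa\`a di Bruno reindexing by multiplicity vectors to identify $w_n/n!$ as the coefficient of $\varepsilon^n$, and absorption of all mixed, high-degree, and integral-remainder contributions into $R_{m,\varepsilon}$, each of order $o(\varepsilon^{m-1})$. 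The only (harmless) imprecision is attributing the quantitative bounds $\norm{u_\varepsilon - u}_{\sC^p} = O(\varepsilon)$ and $\norm{\varrho_\varepsilon}_{\sC^p} = o(\varepsilon^{m-1})$ to Proposition~\ref{prop:ina} alone: that proposition only gives the decomposition with terms in $\sC^p$, and the estimates come from \eqref{eq:rm} together with Lemmas~\ref{lm:ri} and~\ref{lm:trep}, which is exactly what the paper invokes at the same point.
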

\begin{proof}
  Taylor's formula applied to $F$ yields
  \begin{equation}
    \label{eq:fufi}
  \begin{split}
    F(u_\varepsilon) - F(u) &= \sum_{n=1}^{m-1} D^nF(u)(u_\varepsilon-u)^{\otimes n}\\
    &\quad + \int_0^t \frac{(1-t)^{m-1}}{(m-1)!}
    D^mF(tu_\varepsilon+(1-t)u)%
    (u_\varepsilon-u)^{\otimes m}\,dt,
  \end{split}
  \end{equation}
  where, by Proposition~\ref{prop:ina},
  \begin{equation}
    \label{eq:ue}
    u_\varepsilon - u = \sum^{m-1}_{k=1} \frac{v_k}{k!} \varepsilon^k +
    R_{m,\varepsilon} = \sum^{m-1}_{k=1} \frac{v_k}{k!} \varepsilon^k +
    \bar{R}_{m,\varepsilon} \varepsilon^m,
  \end{equation}
  where $\bar{R}_{m,\varepsilon} \in \sC^p$ by \eqref{eq:rm} and
  Lemmas~\ref{lm:ri} and \ref{lm:trep}.
  Multilinearity of the higher-order derivatives of $F$ implies that
  \begin{equation}
    \label{eq:dudu}
    \sum_{n=1}^{m-1} D^nF(u)(u_\varepsilon-u)^{\otimes n} = 
    \sum_{n=1}^{m-1} \frac{w_n}{n!} \varepsilon^n 
    + \sum_{n=m}^{(m-1)m} a_n \varepsilon^n,
  \end{equation}
  where $w_n$, $n=1,\ldots,m-1$ are defined as in \eqref{eq:wn}, and
  the $a_n$ are (finite) linear combinations of terms of the type
  \[
  D^jF(u) \Bigl( v_1^{\otimes k_1},\ldots,v_n^{\otimes k_n},%
  \bar{R}_{m,\varepsilon}^{\otimes k_{n+1}} \Bigr),
  \]
  where $j \in \{1,\ldots,m-1\}$ and $k_1,\ldots,k_{n+1} \in
  \mathbb{N}$, $k_1+\cdots+k_n+k_{n+1}=j$.

  Let us show that $w_n \in E$ for every $n=1,\ldots,m-1$: by
  \eqref{eq:wn} it suffices to note that, for any $j=1,\ldots,n$ and
  $k_1+\cdots+k_n=j$,
  \[
  \norm[\Big]{D^jF(u) \Bigl( \bigl(v_1/1!\bigr)^{\otimes k_1},\ldots,%
    \bigl(v_n/n!\bigr)^{\otimes k_n} \Bigr)}_E \lesssim
  \norm[\big]{D^jF(u)}_{\cL_k(\sC^p;E)} \norm[\big]{v_1}^{k_1}_{\sC^p}
  \cdots \norm[\big]{v_n}^{k_n}_{\sC^p},
  \]
  where the right-hand side is finite because $u \in \sC^p$ and $F \in
  C^m(\sC^p;E)$ by assumption, and $v_1,\ldots,v_{m-1} \in \sC^p$ by
  Proposition~\ref{prop:ina}.
  The proof that $a_n \in E$ for all $n=m,\ldots,(m-1)m$, with norms
  bounded uniformly for $\varepsilon \in [0,\delta]$, is entirely
  similar, as it immediately follows by Lemmas~\ref{lm:ri} and
  \ref{lm:trep}.

  Finally, by multilinearity of $D^mF$, the integral on the right-hand
  side of \eqref{eq:fufi} can be written as $\sum_{n=m}^{m^2}
  b_n\varepsilon^n$, where $b_n$ depends on $\varepsilon$. By a
  reasoning entirely similar to the previous ones, in order to prove
  that $b_n \in E$ for all $n$ and that their $E$-norms are bounded as
  $\varepsilon \to 0$, we proceed as follows: $D^mF$ is continuous,
  hence bounded on a neighborhood $U$ of $u$. Since $u_\varepsilon \to
  u$ in $\sC^p$ as $\varepsilon \to 0$ by assumption, $u_\varepsilon
  \in U$ for $\varepsilon$ sufficiently small, hence also
  $tu_\varepsilon+(1-t)u \in U$, so that $D^mF(tu_\varepsilon+(1-t)u)$
  is bounded in $\cL_k(\sC^p;E)$ uniformly over $\varepsilon$ in a
  (right) neighborhood of zero and $t \in [0,1]$. Minkowski's
  inequality now implies that the $E$-norm of each $b_n$ can be
  estimated uniformly with respect to $\varepsilon$.
  Setting
  \[
  R_{m,\varepsilon} := \sum_{n=m}^{(m-1)m} a_n\varepsilon^n +
  \sum_{n=m}^{m^2} b_n\varepsilon^n,
  \]
  the proof is completed.
\end{proof}

We now consider the case where $F$ is defined only on $C([0,T];H)$.
\begin{thm}
  Let $E_0$ be a Banach space, $F\colon C([0,T];H) \to E_0$ of class
  $C^m$.  Assume that there exists $\beta \geq 0$ such that
  \[
  \norm[\big]{D^jF(x)}_{\cL_j(C([0,T];H);E)} \lesssim 1 +
  \norm[\big]{x}^\beta_{C([0,T];H)} \qquad \forall j \leq m
  \]
  and let $q > 0$ be defined by
  \[
  \frac{\beta+m}{p} = \frac{1}{q}.
  \]
  Then the conclusions of Theorem~\ref{thm:ae} hold with
  $E:=L^q(\Omega;E_0)$.
\end{thm}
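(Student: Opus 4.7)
The plan is to follow the same structure as the proof of Theorem~\ref{thm:ae}, but to apply Taylor's formula for $F$ \emph{pathwise} (for each $\omega$) in $C([0,T];H)$ and then promote the pathwise identities to identities in $L^q(\Omega;E_0)$ via H\"older's inequality, using the polynomial growth hypothesis on the $D^jF$. Thus I would first write, for a.e.\ $\omega$,
\[
  F(u_\varepsilon(\omega)) - F(u(\omega))
  = \sum_{n=1}^{m-1} D^nF(u(\omega))(u_\varepsilon-u)(\omega)^{\otimes n}
  + \int_0^1 \frac{(1-t)^{m-1}}{(m-1)!} D^mF\bigl(tu_\varepsilon+(1-t)u\bigr)(\omega)(u_\varepsilon-u)(\omega)^{\otimes m}\,dt,
\]
substitute the decomposition $u_\varepsilon-u=\sum_{k=1}^{m-1}(\varepsilon^k/k!)v_k+\bar R_{m,\varepsilon}\varepsilon^m$ from Proposition~\ref{prop:ina}, and expand by multilinearity exactly as in the proof of Theorem~\ref{thm:ae}. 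This yields the same algebraic coefficients $w_n$ given by \eqref{eq:wn}, plus a remainder $R_{m,\varepsilon}=\sum_{n=m}^{(m-1)m}a_n\varepsilon^n+\sum_{n=m}^{m^2}b_n\varepsilon^n$, now viewed as a pathwise identity in $E_0$.

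The core estimate is then a single Hölder computation. A generic multilinear term has pathwise bound of the form
\[
  \norm[\big]{D^jF(u)(z_1,\dots,z_j)}_{E_0}
  \lesssim \bigl(1+\norm{u}_{C([0,T];H)}^\beta\bigr)
  \norm{z_1}_{C([0,T];H)}\cdots\norm{z_j}_{C([0,T];H)},
\]
where each $z_i$ is one of $u, v_1,\ldots,v_{m-1}, \bar R_{m,\varepsilon}$ (or $u_\varepsilon-u$ in the integral term). Since $u, u_\varepsilon, v_k$ and $\bar R_{m,\varepsilon}$ all lie in $\sC^p=L^p(\Omega;C([0,T];H))$ (with norms uniformly bounded in $\varepsilon$, by Proposition~\ref{prop:ina}, Lemmas~\ref{lm:ri}, \ref{lm:trep}, and the fact that $u_\varepsilon\to u$ in $\sC^p$), generalized H\"older gives
\[
  \norm[\big]{D^jF(u)(z_1,\dots,z_j)}_{L^r(\Omega;E_0)}<\infty,
  \qquad \frac1r=\frac{\beta+j}{p}.
\]
For $j\le m$ we have $r\ge p/(\beta+m)=q$, so every $w_n$, $a_n$, and (after estimating the convex combination by $\norm{u}^\beta+\norm{u_\varepsilon}^\beta$ up to a multiplicative constant) every $b_n$ lies in $L^q(\Omega;E_0)$ with norms bounded uniformly for $\varepsilon\in\mathopen]0,\delta]$.

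Finally, since each $a_n\varepsilon^n$ and $b_n\varepsilon^n$ in $R_{m,\varepsilon}$ carries a factor $\varepsilon^n$ with $n\ge m$, dividing by $\varepsilon^{m-1}$ leaves at least one explicit power of $\varepsilon$ multiplying an $L^q$-bounded coefficient, so $R_{m,\varepsilon}/\varepsilon^{m-1}\to 0$ in $L^q(\Omega;E_0)$ as $\varepsilon\to 0$. The main obstacle is organisational rather than conceptual: one must verify that every coefficient produced by the pathwise multilinear expansion fits inside the single Hölder scheme $\beta+j\le\beta+m$, and that the convex combination $tu_\varepsilon+(1-t)u$ appearing in the integral term can be controlled uniformly in $t\in[0,1]$ and in $\varepsilon$ by the triangle inequality applied to the growth bound on $D^mF$, together with the uniform bound $\sup_\varepsilon\norm{u_\varepsilon}_{\sC^p}<\infty$ obtained from $u_\varepsilon\to u$ in $\sC^p$.
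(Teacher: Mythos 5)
Your proposal is correct and follows essentially the same route as the paper's own proof: a pathwise Taylor expansion in $C([0,T];H)$, substitution of the decomposition from Proposition~\ref{prop:ina}, and the identical generalized H\"older scheme based on $\beta/p + j/p \leq (\beta+m)/p = 1/q$ for $j \leq m$, with the convex combination in the integral remainder controlled exactly as in the paper via $\norm{u+t(u_\varepsilon-u)}^\beta_{\sC^p} \lesssim 1+\norm{u}^\beta_{\sC^p}+\norm{u_\varepsilon-u}^\beta_{\sC^p}$ and the convergence $u_\varepsilon \to u$ in $\sC^p$. The only cosmetic difference is that you phrase the integrability as membership in $L^r$ with $r \geq q$ and then use the finiteness of the measure, whereas the paper applies H\"older directly at exponent $q$; this changes nothing of substance.
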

\begin{proof}
  Taylor's theorem implies that \eqref{eq:fufi}, \eqref{eq:ue} and
  \eqref{eq:dudu} still hold, as an identities of $E_0$-valued random
  variables. As in the proof of the previous theorem, the integral on
  the right-hand side of \eqref{eq:fufi} can be written as the finite
  series $\sum_{n=m}^{m^2} b_n \varepsilon^n$, with each $b_n$
  possibly depending on $\varepsilon$. 
  We have to show that $w_n, a_n, b_n \in L^q(\Omega;E_0)$ for every
  $n$, and that the elements in $(a_n)$ and $(b_n)$ that depends on
  $\varepsilon$ remain bounded in $L^q(\Omega;E_0)$ as
  $\varepsilon \to 0$. To this purpose, denoting the norms of
  $C([0,T];H)$ and $\cL_j(C([0,T];H);E_0)$ by $\norm{\cdot}$ and
  $\norm{\cdot}_{\cL_j}$, respectively, for simplicity of notation,
  note that one has, for any $j \leq m$,
  \[
  \norm[\Big]{D^jF(u) \Bigl(%
  v_1^{\otimes k_1},\ldots,v_n^{\otimes k_n},\bar{R}_{m,\varepsilon}^{\otimes k_{n+1}}%
  \Bigr)}_E \leq \norm[\big]{D^jF(u)}_{\cL_j} \norm[\big]{v_1}^{k_1} \cdots
  \norm[\big]{v_n}^{k_n} \norm[\big]{\bar{R}_{m,\varepsilon}}^{k_{n+1}},
  \]
  where $\norm{D^jF(u)}_{\cL_j} \lesssim 1 + \norm{u}^\beta$ by
  assumption and $k_1+\cdots+k_{n+1}=j \leq m$, hence
  \[
  \frac{\beta}{p} + \frac{k_1}{p} + \cdots + \frac{k_{n+1}}{p} \leq
  \frac{\beta}{p} + \frac{m}{p} = \frac{1}{q}.
  \]
  Applying H\"older's inequality with the exponents implied by this
  inequality yields
  \begin{align*}
    &\norm[\Big]{D^jF(u) \Bigl(%
    v_1^{\otimes k_1},\ldots,v_n^{\otimes k_n},\bar{R}_{m,\varepsilon}^{\otimes k_{n+1}}%
    \Bigr)}_{L^q(\Omega;E)}\\
    &\hspace{3em} \lesssim \bigl( 1 + \norm{u}^\beta_{\sC^p} \bigr)
    \norm[\big]{v_1}_{\sC^p} \cdots
    \norm[\big]{v_n}_{\sC^p}
    \norm[\big]{\bar{R}_{m,\varepsilon}}_{\sC^p},
  \end{align*}
  where we have used the identity
  $\norm{z^\beta}_{L^{p/\beta}(\Omega)} =
  \norm{z}^\beta_{L^p(\Omega)}$, which holds for any positive random
  variable $z$.  Recalling that $\bar{R}_{m,\varepsilon}$ is bounded
  in $\sC^p$ uniformly over $\varepsilon \in [0,\delta]$, the claim
  about $(w_n)$ and $(a_n)$ is proved. In order to show that $(b_n)$
  enjoys the same properties of $(a_n)$, it is immediately seen that
  it suffices to bound the norm of $D^mF(tu_\varepsilon + (1-t)u)$ in
  $L^{q/\beta}(\Omega;\cL_m(C(0,T;H);E_0))$, uniformly with respect to
  $\varepsilon$ in a right neighborhood of zero. But
  \[
  \norm[\big]{D^mF(tu_\varepsilon + (1-t)u)}_{\cL_m} \lesssim 1 +
  \norm[\big]{u+t(u_\varepsilon - u)}^\beta
  \]
  implies   
  \begin{align*}
    \norm[\big]{D^mF(tu_\varepsilon + (1-t)u)}_{L^{p/\beta}(\Omega;\cL_m)}
    &\lesssim 1 + \norm[\big]{u+t(u_\varepsilon - u)}_{\sC^p}^\beta\\
    &\lesssim 1 + \norm[\big]{u}^\beta_{\sC^p}
    + \norm[\big]{u_\varepsilon - u}^\beta_{\sC^p},
  \end{align*}
  where the norm in $\sC^p$ of $u_\varepsilon - u$ tends to zero as
  $\varepsilon \to 0$. The proof is thus completed.
\end{proof}
\begin{rmk}
  One could have also approached the problem in a more abstract way,
  establishing conditions implying that the function $F$ can be
  ``lifted'' to a function of class $C^m$ from $\sC^p$ to
  $E=L^q(\Omega;E_0)$, and then applying the Theorem~\ref{thm:ae}.  We
  have preferred the above more direct way because it could also be
  applicable, \emph{mutatis mutandis}, in situations where $F$ admits
  a series representation not necessarily of Taylor's type.
\end{rmk}


\section{Singular perturbations of a transport equation and the
  Musiela SPDE}
\label{sec:Mus}
\subsection{A transport equation}
\label{ssec:t}
Let $w$ be a fixed strictly positive real number and set, for
notational convenience, $L^2_w:=L^2(\erre,e^{wx}dx)$. Let $H$ be the
Hilbert space of absolutely continuous functions
$f \in L^1_{\mathrm{loc}}(\erre)$ such that $f' \in L^2_w$, equipped
with the scalar product
\[
\ip{f}{g} := \lim_{x \to +\infty} f(x)g(x) + \ip[\big]{f'}{g'}_{L^2_w}.
\]
The definition is well posed because
$f(+\infty):=\lim_{x \to +\infty} f(x)$ exists and is finite for every
$f \in H$. In fact, for any $a \in \erre$ such that $f(a)$ is
finite, one has
\begin{equation}
\label{eq:fixa}
  \abs{f(x) - f(a)} \leq \int_a^x \abs{f'(y)}\,dy
  \leq \biggl( \int_a^x \abs{f'(y)}^2 e^{wy}\,dy \biggr)^{1/2}
  \biggl( \int_a^\infty e^{-wy}\,dy \biggr)^{1/2}.
\end{equation}
We shall denote the norm in $H$ induced by the above scalar product as
$\norm{\cdot}$. The following simple consequence of the definition of
$H$ will be repeatedly used below: if $f \in H$, then
\[
\lim_{x \to \pm \infty} \abs{f'(x)}^2 e^{wx} = 0,
\]
in particular $\lim_{x\to+\infty} f'(x)=0$.

Let $S_A$ be the strongly continuous semigroup on $H$ defined by
$[S_A(t)f](x):=f(t+x)$. The elementary identity
\[
  \int_\erre \abs{f'(t+x)}^2 e^{wx}\,dx =
  e^{-wt} \int_\erre \abs{f'(x)}^2 e^{wx}\,dx
\]
implies that $S_A$ is a contraction semigroup. Its generator is the
maximal dissipative operator $A\colon f \mapsto f'$ defined on
\[
\dom(A) = \bigl\{ f \in H:\, f' \in H \bigr\}.
\]

One has the following formula of integration by parts.
\begin{lemma}
  If $f$, $g \in \dom(A)$, then
  \[
    \ip{Af}{g} = -\ip{f}{Ag} - w\ip[\big]{f'}{g'}_{L^2_w}.
  \]
  In particular,
  \[
    \ip{Af}{f} = -\frac{w}{2} \norm[\big]{f'}^2_{L^2_w}.
  \]
\end{lemma}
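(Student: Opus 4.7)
The plan is to unpack both sides of the claimed identity using the definition of the inner product on $H$, and reduce the equality to an integration by parts in $L^2_w$, with all boundary terms vanishing thanks to the asymptotic decay properties of elements of $\dom(A)$ recorded just above the lemma.

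First, I would write
\[
\ip{Af}{g} = \lim_{x \to +\infty} f'(x) g(x) + \ip[\big]{f''}{g'}_{L^2_w},
\qquad
\ip{f}{Ag} = \lim_{x \to +\infty} f(x) g'(x) + \ip[\big]{f'}{g''}_{L^2_w}.
\]
The two boundary limits vanish: indeed, since $f$, $g \in \dom(A)$ means $f'$, $g' \in H$, the remark preceding the lemma gives $f'(x)\to 0$ and $g'(x)\to 0$ as $x\to+\infty$, while $f$ and $g$ have finite limits at $+\infty$. So both boundary terms drop out, and what remains is to relate $\langle f'', g'\rangle_{L^2_w}$ to $\langle f', g''\rangle_{L^2_w}$.

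The key step is the integration by parts
\[
\int_\erre f''(x) g'(x) e^{wx}\,dx
= \Bigl[ f'(x) g'(x) e^{wx} \Bigr]_{-\infty}^{+\infty}
- \int_\erre f'(x)\bigl( g''(x) + w g'(x) \bigr) e^{wx}\,dx.
\]
The boundary term at $\pm\infty$ vanishes by Cauchy--Schwarz applied to $\abs{f'(x)}^2 e^{wx} \to 0$ and $\abs{g'(x)}^2 e^{wx} \to 0$ (which, again, is the consequence of $f'$, $g' \in H$ noted before the lemma). Thus $\ip{f''}{g'}_{L^2_w} = -\ip{f'}{g''}_{L^2_w} - w\ip{f'}{g'}_{L^2_w}$, and combining with the two expansions above yields $\ip{Af}{g} = -\ip{f}{Ag} - w\ip{f'}{g'}_{L^2_w}$, which is the first assertion. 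The second assertion follows by setting $g=f$ and solving $2\ip{Af}{f} = -w\norm{f'}^2_{L^2_w}$.

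The only subtle point — and the one I expect to be the main (minor) obstacle — is justifying that every term in sight is integrable and that the boundary contributions truly vanish. The $L^2_w$ integrability of $f''g'$ and $f'g''$ is immediate from $f$, $g \in \dom(A)$, and the vanishing of the boundary terms is exactly the content of the asymptotic statement the authors prove in the paragraph immediately preceding this lemma, so no further argument is needed.
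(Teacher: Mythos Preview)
Your proof is correct and follows essentially the same route as the paper's: expand the inner product, note that the boundary terms at $+\infty$ in $\ip{Af}{g}$ and $\ip{f}{Ag}$ vanish because $f'(+\infty)=g'(+\infty)=0$, integrate $\int f''g'e^{wx}\,dx$ by parts, and kill the remaining boundary terms using $\abs{f'(x)}^2e^{wx}\to 0$ and $\abs{g'(x)}^2e^{wx}\to 0$. The only cosmetic differences are that the paper bounds $f'g'e^{wx}$ by $\abs{f'}^2e^{wx}+\abs{g'}^2e^{wx}$ rather than invoking Cauchy--Schwarz, and it does not expand $\ip{f}{Ag}$ separately up front.
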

\begin{proof}
  By definition, one has
  \[
  \ip{Af}{g} = f'(+\infty)g(+\infty) + \int_\erre f''(x)g'(x)e^{wx}\,dx,
  \]
  where $f'(+\infty)=0$ and $g(+\infty)$ is finite, hence, integrating
  by parts,
  \begin{align*}
    \ip{Af}{g}
    &= \int_\erre f''(x)g'(x) e^{wx}\,dx\\
    &= \lim_{x\to+\infty} f'(x)g'(x)e^{wx} - \lim_{x\to-\infty} f'(x)g'(x)e^{wx}\\ 
    &\quad - \int_\erre f'(x)g''(x) e^{wx}\,dx
      - w \int_\erre f'(x)g'(x)e^{wx}\,dx\\
    &= - \ip{f}{Ag} - w\ip[\big]{f'}{g'}_{L^2_w},
  \end{align*}
  where the two limits are equal to zero by the elementary estimate
  $f'(x)g'(x)e^{wx} \leq \abs{f'(x)}^2e^{wx} + \abs{g'(x)}^2e^{wx}$.
\end{proof}

Let us now consider the operator $A^2$, defined on its natural domain
$\dom(A^2)$ of elements $f \in \dom(A)$ such that $Af \in \dom(A)$.
\begin{lemma}
  The operator $A^2$ is quasi-dissipative. More precisely, $A^2-(w^2/2)I$
  is dissipative.
\end{lemma}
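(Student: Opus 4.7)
The plan is to show that $\langle A^2 f, f\rangle \leq (w^2/2)\|f\|^2$ for every $f \in \dom(A^2)$, which is equivalent to the claimed quasi-dissipativity. Fix such an $f$; by definition $f'$ and $f''$ both belong to $H$, so both $|f'(x)|^2e^{wx}$ and $|f''(x)|^2e^{wx}$ vanish as $x \to \pm\infty$, and moreover $f'(+\infty)=0$.

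First I would apply the integration-by-parts lemma with $g$ replaced by $Af$, which gives
\[
\langle Af, Af\rangle = -\langle f, A^2f\rangle - w\langle f', f''\rangle_{L^2_w}.
\]
Using the formula $\|Af\|^2 = |f'(+\infty)|^2 + \|f''\|^2_{L^2_w} = \|f''\|^2_{L^2_w}$ and symmetry of the inner product, this rearranges to
\[
\langle A^2f, f\rangle = -\|f''\|^2_{L^2_w} - w\langle f', f''\rangle_{L^2_w}.
\]
Next I would evaluate the cross term by a direct integration by parts: since $f'f'' = \tfrac{1}{2}(f'^2)'$ and the boundary term $\bigl[\tfrac{1}{2}|f'(x)|^2 e^{wx}\bigr]_{-\infty}^{+\infty}$ vanishes by the decay property recalled above,
\[
\langle f', f''\rangle_{L^2_w} = \int_\erre \tfrac{1}{2}\bigl(f'^2\bigr)'(x)\, e^{wx}\,dx
 = -\frac{w}{2}\|f'\|^2_{L^2_w}.
\]

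Combining the two displays yields
\[
\langle A^2 f, f\rangle = -\|f''\|^2_{L^2_w} + \frac{w^2}{2}\|f'\|^2_{L^2_w}
 \leq \frac{w^2}{2}\bigl(|f(+\infty)|^2 + \|f'\|^2_{L^2_w}\bigr) = \frac{w^2}{2}\|f\|^2,
\]
which is exactly dissipativity of $A^2 - (w^2/2)I$. There is no real obstacle here beyond carefully tracking the boundary behaviour at $\pm\infty$, which is handled by the general decay property for functions in $H$ already recorded after the definition of $H$; the cancellation of the $\|f''\|^2_{L^2_w}$ terms is what produces the precise constant $w^2/2$.
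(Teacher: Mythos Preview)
Your proof is correct and follows essentially the same route as the paper: both substitute $g=Af$ in the integration-by-parts lemma to obtain $\langle A^2f,f\rangle = -\|Af\|^2 - w\langle f',f''\rangle_{L^2_w}$, and then use $\langle f',f''\rangle_{L^2_w} = -\tfrac{w}{2}\|f'\|^2_{L^2_w}$ (which the paper reads off as the identity $\langle Af,f\rangle = -\tfrac{w}{2}\|f'\|^2_{L^2_w}$ from the same lemma, while you re-derive it by one more integration by parts). One minor remark: in your closing sentence there is no actual ``cancellation'' of $\|f''\|^2_{L^2_w}$ terms---that term is simply dropped because it is nonpositive.
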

\begin{proof}
  Let $f \in \dom(A^2)$, and substitute $g=Af$ in the integration by
  parts formula of the previous lemma. We get
  \[
    \norm{Af}^2 = -\ip{A^2f}{f} - w\ip[\big]{f'}{f''}_{L^2_w} =
    -\ip{A^2f}{f} - w\ip{Af}{f}
  \]
  i.e.
  \[
    \ip{A^2f}{f} + w\ip{Af}{f} = - \norm{Af}^2,
  \]
  hence also
  \[
    \ip{A^2f}{f} - \frac{w^2}{2} \norm[\big]{f'}^2_{L^2_w} =
    - \norm{Af}^2
  \]
  and
  \[
  \ip{A^2f}{f} - \frac{w^2}{2} \norm{f}^2 = - \norm{Af}^2
  - \frac{w^2}{2} \abs{f(+\infty)}^2 \leq 0.
  \qedhere
  \]
\end{proof}

\begin{prop}
  \label{prop:gai}
  The operator $G:=A^2 - (w^2/2)I$ is maximal dissipative in $H$.
\end{prop}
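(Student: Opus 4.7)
The strategy is Lumer--Phillips: $G$ is dissipative by the previous lemma, and $\dom(G)=\dom(A^2)$ is dense in $H$ (for instance by iterated application of the resolvent of $A$), so it suffices to verify the range condition $\mathrm{Range}(\lambda I - G) = H$ for some $\lambda > 0$. Fix such a $\lambda$ and set $\mu := \lambda + w^2/2 > 0$; the task becomes to solve
\[
\mu f - f'' = g, \qquad f \in \dom(A^2),
\]
for an arbitrary $g \in H$.

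The first reduction isolates the boundary value at $+\infty$. The remark that $f'(\pm\infty)=0$ for any $f \in H$, applied to both $f$ and to $f' \in H$, yields $f'(+\infty)=f''(+\infty)=0$ whenever $f \in \dom(A^2)$. Evaluating the equation at $+\infty$ therefore forces $f(+\infty) = g(+\infty)/\mu$, so subtracting $g(+\infty)/\mu$ from the unknown and $g(+\infty)$ from the data reduces the problem to the case $g \in H_0 := \{h \in H : h(+\infty)=0\}$ with $f$ also required to lie in $H_0$.

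On $H_0$ the differentiation map $V : f \mapsto f'$ is an isometric isomorphism onto $L^2_w$, with inverse $V^{-1}u(x) := -\int_x^\infty u(y)\,dy$ (the integral being absolutely convergent by Cauchy--Schwarz against $e^{-wy/2}\in L^2(x,\infty)$). Differentiating the equation, I would seek $u \in L^2_w$ with $u', u'' \in L^2_w$ solving $\mu u - u'' = g'$ in $L^2_w$; then $f := V^{-1}u$ lies in $H_0$ with $f' = u \in H$ and $f'' = u' \in H$, whence $f \in \dom(A^2)$, while $\mu f - f'' - g$ has vanishing derivative and, being $0$ at $+\infty$, is identically zero.

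The remaining problem $\mu u - u'' = h$ on $L^2_w$ I would handle by the exponential conjugation $\tilde u := e^{wx/2} u$, which provides a unitary identification $L^2_w \simeq L^2(\erre)$. A direct calculation converts the equation into the constant-coefficient elliptic problem
\[
-\tilde u'' + w\tilde u' + (\lambda + w^2/4)\tilde u = \tilde h \qquad \text{in } L^2(\erre).
\]
Fourier analysis then concludes: the symbol $p(\xi) = \xi^2 + iw\xi + \lambda + w^2/4$ satisfies $\abs{p(\xi)} \geq \xi^2 + \lambda + w^2/4$ and $(1+\xi^2)/p(\xi)$ is bounded, so $\tilde u := \mathcal{F}^{-1}(\widehat{\tilde h}/p) \in H^2(\erre)$, and reversing the conjugation yields $u$ with the required regularity. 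The principal obstacle is not any single estimate but the bookkeeping across these three reductions, ensuring that the $H^2$-regularity produced at the Fourier level is precisely what places $f$ back in $\dom(A^2)$ after unwinding the isomorphism $V$ and the translation by $g(+\infty)/\mu$.
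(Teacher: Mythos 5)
Your proof is correct and follows essentially the same route as the paper's: both arguments differentiate the equation to pass to the weighted space $L^2_w$ (thereby eliminating the boundary value at $+\infty$) and then conjugate by $e^{wx/2}$ to arrive at a constant-coefficient second-order elliptic problem on the whole line --- your equation $-\tilde u'' + w\tilde u' + (\lambda + w^2/4)\tilde u = \tilde h$ is the paper's $(\lambda - w^2/4)z + wz' - z'' = \tilde f$ up to the shift $w^2/2$ in the spectral parameter, which appears only because the paper works with $\lambda - A^2$ rather than $\lambda - G$. The one substantive difference is the final solvability step: the paper applies Lax--Milgram to the bilinear form $a$ on $H^1(\erre)$ (coercive for $\lambda > w^2/4$) and then bootstraps the weak solution to $H^2$, whereas you invert the symbol $p(\xi) = \xi^2 + iw\xi + \lambda + w^2/4$ directly via Fourier multipliers, using $\abs{p(\xi)} \geq \xi^2 + \lambda + w^2/4$; the two devices are interchangeable here, with Fourier inversion giving $H^2$-regularity in one stroke at no extra cost. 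To your credit, you spell out the bookkeeping that the paper compresses into ``this immediately yields the existence of a solution $y$'': you fix the additive constant through the forced value $f(+\infty) = g(+\infty)/\mu$, identify $H_0 \to L^2_w$, $f \mapsto f'$, as an isometric isomorphism with explicit inverse, and verify that $\mu f - f'' - g$ has vanishing derivative and vanishes at $+\infty$ (via $f''(+\infty)=0$, which follows from the paper's remark applied to $f' \in H$), hence is identically zero --- exactly the unwinding the paper leaves implicit.
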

\begin{proof}
  The dissipativity of $G$ has already been proved. Moreover, $A^2$ is
  closed, as is every integer positive power of the generator of a
  strongly continuous semigroup (see, e.g.,
  \cite[Proposition~1.1.6]{BuBe}). Hence we only have to show that
  there exist $\lambda>0$ such that the image of $\lambda I-G$ is
  $H$. To this purpose, let $f \in H$ and consider the equation
  $\lambda y - y'' = f$, which yields $\lambda y'-y'''=f'$. Defining
  (formally, for the moment) $z$ through $y'(x)=z(x)e^{-wx/2}$, one has
  \begin{equation}
    \label{eq:uai3}
    y'''(x) e^{wx/2} =  z''(x) - wz'(x) + \frac{w^2}{4} z(x),
  \end{equation}
  hence
  \[
    \lambda y'(x)e^{wx/2}  - y'''(x) e^{wx/2}
    = \bigl( \lambda - w^2/4 \bigr) z(x) + wz'(x) - z''(x).
  \]
  We are thus led to consider the equation
  \[
    \bigl( \lambda - w^2/4 \bigr) z + wz' - z'' = \widetilde{f},
    \qquad \widetilde{f}(x) := f'(x) e^{wx/2}.
  \]
  Let us introduce the bounded bilinear form $a$ on $H^1:=H^1(\erre)$
  defined as
  \[
    a(\varphi,\psi) := \bigl( \lambda - w^2/4 \bigr) \int_\erre \varphi\psi
    + w \int_\erre \varphi' \psi + \int_\erre \varphi' \psi'.
  \]
  It is immediately seen that, for any $\lambda>w^2/4$, the bilinear
  form $a$ is coercive on $H^1$, hence the Lax-Milgram theorem yields
  the existence and uniqueness of a (weak) solution $z \in
  H^1$. Moreover, the equation satisfied by $z$ implies that, in fact,
  $z \in H^2$. This immediately yields the existence of a solution $y$
  to $\lambda y - y''=f$. Moreover, by definition of $z$ it is
  immediate that $y \in H$, the identity
  $y''(x)e^{wx/2}=z'(x)-wz(x)/2$ implies that $y'' \in L^2_w$, and
  \eqref{eq:uai3} implies that $y''' \in L^2_w$, i.e.
  $y \in \dom(A^2)$, thus completing the proof.
\end{proof}

Since $A$ is maximal dissipative, the transport equation on $H$
\[
du = Au\,dt + \alpha(u)\,dt + B(u)\,dW, \qquad u(0)=u_0,
\]
with $\alpha$ and $B$ satisfying the measurability and Lipschitz
continuity assumptions of \S\ref{sec:3} and $u_0 \in L^p(\Omega;H)$,
$p>0$, admits a unique mild solution $u \in \sC^p$ (see, e.g., as
already mentioned, \cite[Chapter~7]{DZ92} and \cite{cm:SIMA18}). Under
the same assumptions on $\alpha$, $B$, and $u_0$, the singularly
perturbed equation
\[
  du_\varepsilon = (A + \varepsilon G)u_\varepsilon\,dt
  + \alpha(u_\varepsilon)\,dt + B(u_\varepsilon)\,dW,
  \qquad u_\varepsilon(0)=u_0,
\]
admits a unique mild solution $u_\varepsilon \in \sC^p$, which
converges to $u$ in $\sC^p$ as $\varepsilon \to 0$.
Furthermore, if the coefficients $\alpha$ and $B$ do not depend on $u$
and there exists an integer number $m \geq 1$ such that
\begin{gather*}
  u_0 \in L^p(\Omega;\dom(A^{2m})), \quad %
  \alpha \in L^p(\Omega;L^1(0,T;\dom(A^{2m}))),\\
  B \in L^p(\Omega;L^2(0,T;\cL^2(U;\dom(A^{2m})))),
\end{gather*}
then we can construct a series expansion of $u_\varepsilon$ around $u$
of order $m-1$, applying the results of \S\ref{sec:exp}.

\subsection{Parabolic approximation of Musiela's SPDE}
Let $u(t,x)$, $t,x \geq 0$, denote the instantaneous forward rate at
time $t$ with maturity $t+x$. Musiela observed that the equation for
forward rates in the Heath-Jarrow-Morton model can be written as (the
mild form of)
\begin{equation}
  \label{eq:Mus}
  du(t,x) = \partial_x u(t,x)\,dt + \alpha_0(t,x)\,dt +
  \sum_{k=1}^\infty \sigma_k(t,x)dw^k(t),
\end{equation}
where $(w^k)_{k \in \mathbb{N}}$ is a sequence of standard real Wiener
processes, the volatilities $\sigma_k$ are possibly random, and
$\alpha_0$ is uniquely determined by $(\sigma_k)$ if the reference
probability measure is such that implied discounted bond prices are
local martingales. In particular, in this case it must necessarily
hold
\[
  \alpha_0(t,x) = \sum_{k=1}^\infty \sigma_k(t,x) \int_0^x
  \sigma_k(t,y)\,dy.
\]
For details on the financial background we refer to, e.g.,
\cite{CarTeh,filipo,HJM,musiela,MusRut}.
There is a large literature on the well-posedness of \eqref{eq:Mus} in
the mild sense, also in the (more interesting) case where
$(\sigma_k)$, hence $\alpha_0$, depend explicitly on the unknown $u$,
with different choices of state space as well as with more general
noise (see, e.g.,
\cite{BaZa:MusLe,BrzKok,filipo,Kusu:Mus,cm:MF10,vargiolu},
\cite[{\S}20.3]{PZ-libro}). Here we limit ourselves to the case where
$(\sigma_k)$ are possibly random, but do not depend explicitly on $u$,
and use as state space $H(\erre_+)$, which we define as the space of
locally integrable functions on $\erre_+$ such that
$f' \in L^2(\erre_+,e^{wx}\,dx)$, endowed with the inner product
\[
\ip{f}{g} = f(+\infty)g(+\infty) + \int_0^{+\infty} f'(x)g'(x) e^{wx}\,dx.
\]
This choice of state space, introduced in \cite{filipo} (cf. also
\cite{tehranchi}), to which we refer for further details, is standard and
enjoys many good properties from the point of view of financial
modeling. For instance, forward curves are continuous and can be
``flat'' at infinity without decaying to zero.

In order to give a precise notion of solution to \eqref{eq:Mus}, we
recall that the semigroup of left translation on $H(\erre_+)$ is
strongly continuous and contractive, and that its generator is
$A_0\colon \phi \mapsto \phi'$ on the domain
$\dom(A_0) = \{\phi:\, \phi' \in H(\erre_+)\}$ (see \cite{filipo}).
Moreover, let us assume that there exists $p>0$ such that
\begin{equation}
  \label{eq:sigma}
  \E \biggl( \sum_{k=1}^\infty \int_0^T
  \norm[\big]{\sigma_k(t,\cdot)}^2_{H(\erre_+)} dt \biggr)^{p} <
  \infty, \qquad \sigma_k(t,+\infty)=0 \quad \forall k \in \mathbb{N},
\end{equation}
so that the random time-dependent linear map
\begin{align*}
  B_0(\omega,t) \colon \ell^2 &\longrightarrow H(\erre_+)\\
  (h_k) &\longmapsto \sum_{k=1}^\infty \sigma_k(\omega,t,\cdot) h_k
\end{align*}
belongs to
$L^{2p}(\Omega;L^2(0,T;\cL^2(\ell^2,H(\erre_+))))$. Setting, for any
$\phi \in L^1_{\mathrm{loc}}(\erre_+)$,
\[
[I\phi](x) := \int_0^x \phi(y)\,dy, \qquad x \geq 0,
\]
one has the basic estimate
\[
  \norm[\big]{\phi\,I\phi}_{H(\erre_+)} \lesssim
  \norm[\big]{\phi}^2_{H(\erre_+)}
\]
for every $\phi \in H(\erre_+)$ such that $\phi(+\infty)=0$
(cf.~\cite{filipo}, or see Lemma~\ref{lm:fif} below for a proof in a
more general setting). This implies that the assumption on
$(\sigma_k)$ yields $\alpha_0 \in L^p(\Omega;L^1(0,T;H(\erre_+)))$. We
then have the following well-posedness result for \eqref{eq:Mus},
written in its abstract form as
\begin{equation}
  \label{eq:musa}
  du + A_0u\,dt = \alpha_0\,dt + B_0\,dW, \qquad u(0)=u_0,
\end{equation}
where $W$ is a cylindrical Wiener process on $U:=\ell^2$.
\begin{prop}
  Let $p>0$. Assume that $u_0 \in L^p(\Omega,\cF_0;H(\erre_+))$ and
  \eqref{eq:sigma} is satisfied. Then \eqref{eq:musa} has a unique
  mild solution
  $u \in \sC^p(H(\erre_+)) := L^p(\Omega;C([0,T];H(\erre_+)))$, which
  depends continuously on the initial datum $u_0$.
\end{prop}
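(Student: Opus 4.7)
The plan is to exploit the fact that the coefficients $\alpha_0$ and $B_0$ do not depend on $u$, so that no fixed-point argument is needed; the mild solution is defined by the variation-of-constants formula
\[
u(t) = S_{A_0}(t)u_0 + S_{A_0} \ast \alpha_0(t) + S_{A_0} \diamond B_0(t),
\]
and I only need to show that each of the three summands defines an element of $\sC^p(H(\erre_+))$ that depends continuously on the data. Uniqueness is then immediate from the explicit formula. First I would check that the data indeed lie in the right spaces: the basic estimate $\norm{\phi\,I\phi}_{H(\erre_+)} \lesssim \norm{\phi}^2_{H(\erre_+)}$ combined with Cauchy--Schwarz in $k$ and the condition \eqref{eq:sigma} yields $\alpha_0 \in L^p(\Omega;L^1(0,T;H(\erre_+)))$, while $B_0 \in L^{2p}(\Omega;L^2(0,T;\cL^2(\ell^2,H(\erre_+)))) \embed L^p(\Omega;L^2(0,T;\cL^2(\ell^2,H(\erre_+))))$ by the discussion preceding the proposition.

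For the first two terms I would argue pathwise. Since $S_{A_0}$ is a strongly continuous contraction semigroup on $H(\erre_+)$, the map $t \mapsto S_{A_0}(t) u_0(\omega)$ is continuous with supremum bounded by $\norm{u_0(\omega)}$, so it belongs to $\sC^p$ and depends linearly and continuously on $u_0$. The deterministic convolution $S_{A_0} \ast \alpha_0$ is continuous in $t$ by dominated convergence, with
\[
\sup_{t\in[0,T]} \norm{S_{A_0} \ast \alpha_0(t)} \leq \norm{\alpha_0}_{L^1(0,T;H(\erre_+))},
\]
hence lies in $\sC^p$ by the integrability of $\alpha_0$.

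The core step is the stochastic convolution $S_{A_0} \diamond B_0$. For $p \geq 2$, the standard maximal inequality for stochastic convolutions driven by contraction semigroups (see, e.g., \cite[Chapter~7]{DZ92}) gives
\[
\E \sup_{t\in[0,T]} \norm{S_{A_0} \diamond B_0(t)}^p \lesssim \E\biggl( \int_0^T \norm{B_0(s)}^2_{\cL^2(\ell^2;H(\erre_+))}\,ds \biggr)^{p/2},
\]
together with pathwise continuity. For $0 < p < 2$, which is included in the statement, one needs the extension of this inequality proved in \cite{cm:SIMA18}, which yields the same bound (up to an absolute constant) and the same pathwise continuity. This is the main obstacle, since the classical Da Prato--Zabczyk theory is formulated only for $p \geq 2$; invoking \cite{cm:SIMA18} removes this restriction.

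Finally, continuous dependence on $u_0$ follows because the map $u_0 \mapsto u$ is affine with linear part $u_0 \mapsto S_{A_0}(\cdot) u_0$, which is a contraction from $L^p(\Omega;H(\erre_+))$ to $\sC^p(H(\erre_+))$ by the pathwise estimate above; explicitly, if $u$ and $\tilde u$ correspond to $u_0$ and $\tilde u_0$, then $u - \tilde u = S_{A_0}(\cdot)(u_0 - \tilde u_0)$ and hence $\norm{u - \tilde u}_{\sC^p} \leq \norm{u_0 - \tilde u_0}_{L^p(\Omega;H(\erre_+))}$. Uniqueness of the mild solution in $\sC^p$ is contained in the variation-of-constants formula, since the difference of any two mild solutions with the same initial datum and the same coefficients is identically zero.
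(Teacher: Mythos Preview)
Your proposal is correct and matches the paper's own treatment: the paper does not give a proof of this proposition but simply states it, relying on the earlier remarks that $B_0 \in L^{2p}(\Omega;L^2(0,T;\cL^2(\ell^2,H(\erre_+))))$ and $\alpha_0 \in L^p(\Omega;L^1(0,T;H(\erre_+)))$ via the estimate $\norm{\phi\,I\phi}_{H(\erre_+)} \lesssim \norm{\phi}^2_{H(\erre_+)}$, together with the general references \cite[Chapter~7]{DZ92} for $p \geq 2$ and \cite{cm:SIMA18} for $p \in (0,2)$, and the footnote observing that pathwise continuity follows from contractivity of the semigroup. Your write-up simply makes these standard ingredients explicit, so there is nothing to compare beyond noting that you have supplied the details the paper omits.
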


Musiela's equation \eqref{eq:musa} is closely related to the transport
equation studied in \S\ref{ssec:t} above, the main difference being
the state space. In the following we shall denote the state space of
the transport equation by $H(\erre)$.

As mentioned in the introduction, it has been suggested
(see~\cite{Cont:TSIR} and references therein) that second-order
parabolic SPDEs, with respect to the physical probability measure,
capture several empirical features of observed forward rates. It seems
reasonable to assume that such SPDEs would retain their parabolic
character even after changing the reference probability measure to one
with respect to which discounted bond prices are (local) martingales,
thus excluding arbitrage. It is then natural to consider singular
perturbations of the Musiela equation on $H(\erre_+)$ adding a
singular term $\varepsilon G$ to the drift $A_0$ in \eqref{eq:musa},
with $G=A_0^2$, which is, roughly speaking, a second derivative in the
time to maturity. On the other hand, if forward rates satisfy the
general assumptions of the Heath-Jarrow-Morton model, the HJM drift
condition is sufficient and necessary for discounted bond prices to be
local martingales. Therefore singular perturbations of the Musiela
SPDE introduce arbitrage, in the sense that the implied discounted
bond prices may not be local martingales. It is hence interesting to
``quantify'' and control the amount of arbitrage introduced by a
parabolic perturbation of the Musiela SPDE \eqref{eq:musa}.
The arguments used in \S\ref{ssec:t} for the transport equation,
however, give rise to major problems, mainly because boundary terms
(at zero) appear that seem impossible to control. To circumvent such
issues, we ``embed'' the abstract Musiela equation \eqref{eq:musa}
into a transport equation on $H(\erre)$ of the type considered in
\S\ref{ssec:t}, we perturb the equation thus obtained, get asymptotic
expansions, and finally ``translate'' back the results, in a suitable
sense, to the Musiela equation.

We need some technical results first.
Let $H_0(\erre_+)$ be the Hilbert space of functions in $H(\erre_+)$
that are zero at infinity. The following embeddings and estimates are
rather straightforward (see \cite{filipo} for a proof) and will be
repeatedly used below:
\begin{itemize}
\item[(i)] $H(\erre_+) \embed C_b(\erre_+)$;
\item[(ii)] $H_0(\erre_+) \embed L^1(\erre_+)$;
\item[(iii)] $H_0(\erre_+) \embed L^4_w(\erre_+)=L^4(\erre_+,e^{wx}\,dx)$.
\end{itemize}

Let $H^m_w(\erre_+)$ be the set of functions in
$L^1_{\mathrm{loc}}(\erre_+)$ that belong to $L^2_w(\erre_+)$ together
with all their derivatives up to order $m$, endowed with the norm
defined by
\[
\norm[\big]{f}_{H^m_w(\erre_+)} = 
\sum_{k=0}^m \norm[\big]{f^{(k)}}_{L^2_w(\erre_+)}.
\]

\begin{lemma}
\label{lm:abc}
  Let $f \in L^1_{\mathrm{loc}}(\erre_+)$ and $m$ a positive
  integer. The following assertions are equivalent: \emph{(a)} $f \in
  \dom(A_0^m)$; \emph{(b)} $f' \in H^m_w(\erre_+)$; \emph{(c)} $x
  \mapsto f'(x)e^{wx/2} \in H^m(\erre_+)$. Moreover, for any $f \in
  \dom(A_0^m)$ with $f(+\infty)=0$,
  \[
  \norm[\big]{f}_{\dom(A_0^m)} = \norm[\big]{f'}_{H^m_w(\erre_+)} \eqsim
  \norm[\big]{f' e^{w\cdot/2}}_{H^m(\erre_+)},
  \]
  where the implicit constant depends only on $m$ and $w$.
\end{lemma}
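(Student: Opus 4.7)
My approach is to unwind the definitions so that (a)$\iff$(b) becomes essentially tautological, to handle (b)$\iff$(c) by a direct Leibniz computation against the weight $e^{wx/2}$, and to derive the norm identity by showing that all boundary contributions at $+\infty$ vanish. For (a)$\iff$(b), I would simply observe that $f \in \dom(A_0^m)$ means, by construction, that $f, f', \ldots, f^{(m)}$ all lie in $H(\erre_+)$, i.e., each $f^{(k)}$ is absolutely continuous with $f^{(k+1)} \in L^2_w(\erre_+)$ for $k = 0, \ldots, m$. This is exactly the condition $f' \in H^m_w(\erre_+)$, once one notes that $f \in L^1_{\mathrm{loc}}(\erre_+)$ with $f' \in L^2_w(\erre_+)$ automatically implies $f \in H(\erre_+)$: Cauchy--Schwarz yields $\int_a^\infty \abs{f'(x)}\,dx < \infty$ for every $a \geq 0$, so $f(+\infty)$ exists and is finite.

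For (b)$\iff$(c) and the associated norm equivalence, I would set $g(x) := f'(x) e^{wx/2}$ and apply Leibniz to obtain
\[
g^{(k)}(x) = e^{wx/2} \sum_{j=0}^k \binom{k}{j} (w/2)^{k-j} f^{(j+1)}(x).
\]
Taking $L^2(\erre_+)$-norms then controls $\norm{g^{(k)}}_{L^2(\erre_+)}$ in terms of $\norm{f^{(j+1)}}_{L^2_w(\erre_+)}$, $j = 0, \ldots, k$. The linear map sending $(f^{(j+1)} e^{w\cdot/2})_{j \leq m}$ to $(g^{(k)})_{k \leq m}$ is upper triangular with unit diagonal, hence invertible with constants depending only on $m$ and $w$; this provides the reverse bound and establishes $\norm{f' e^{w\cdot/2}}_{H^m(\erre_+)} \eqsim \norm{f'}_{H^m_w(\erre_+)}$ together with (b)$\iff$(c).

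The identity $\norm{f}_{\dom(A_0^m)} = \norm{f'}_{H^m_w(\erre_+)}$ rests on showing that, under the hypothesis $f(+\infty) = 0$, all boundary traces $f^{(k)}(+\infty)$ for $k = 0, \ldots, m$ vanish. For $k \geq 1$ I would apply to $g := f^{(k-1)} \in \dom(A_0)$ (which is well defined for $k \leq m$, with $g'' = f^{(k+1)} \in L^2_w(\erre_+)$) the $H(\erre_+)$-analogue of the decay argument already used in \S\ref{ssec:t}: from
\[
\frac{d}{dx}\bigl( \abs{g'(x)}^2 e^{wx} \bigr) = \bigl( 2 g'(x) g''(x) + w \abs{g'(x)}^2 \bigr) e^{wx},
\]
the right-hand side is in $L^1(\erre_+)$ by Cauchy--Schwarz (since $g', g'' \in L^2_w$), so $\abs{g'(x)}^2 e^{wx}$ has a finite limit at $+\infty$; being itself integrable and continuous, that limit must be $0$, whence $f^{(k)}(+\infty) = g'(+\infty) = 0$. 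Each summand $\norm{f^{(k)}}_{H(\erre_+)}^2$ then collapses to $\norm{f^{(k+1)}}_{L^2_w(\erre_+)}^2$, and summing over $k = 0, \ldots, m$ gives the desired identity.

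The main technical obstacle I expect is precisely this boundary-decay step: absolute continuity from $g \in \dom(A_0)$ is needed so that $\abs{g'(x)}^2 e^{wx}$ is continuous with a well-defined limit at $+\infty$, and the identification of that limit as $0$ then rests on integrability plus continuity. Everything else reduces to bookkeeping with weighted Lebesgue norms and an elementary linear-algebra argument for the change of weight.
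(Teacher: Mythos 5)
Your proposal is correct and follows essentially the same route as the paper's proof: (a)$\Leftrightarrow$(b) by unwinding the definition of $\dom(A_0^m)$ together with the Cauchy--Schwarz bound of type \eqref{eq:fixa}, and (b)$\Leftrightarrow$(c) with the norm equivalence via the very same Leibniz identity $\bigl(f'e^{w\cdot/2}\bigr)^{(n)}=\sum_{j=0}^n\binom{n}{j}(w/2)^{n-j}f^{(j+1)}e^{w\cdot/2}$ exploited through its triangular (unit-diagonal) structure. Your only addition is the explicit verification that the traces $f^{(k)}(+\infty)$ vanish for $1\leq k\leq m$, via integrability of $\frac{d}{dx}\bigl(\abs{f^{(k)}(x)}^2e^{wx}\bigr)$; the paper invokes this decay as a stated consequence of the definition of the state space in \S\ref{ssec:t} without written proof, so you are filling in a suppressed detail rather than taking a different path.
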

\begin{proof}
  The equivalence of (a) and (b) is immediate by the definition of
  $A_0$ and by an inequality completely analogous to
  \eqref{eq:fixa}. In particular, if $f(+\infty)=0$, the identity
  ${\norm{f}}_{\dom(A_0^m)} = {\norm{f'}}_{H^m_w(\erre_+)}$ is
  a tautology. The other assertions follow by the identity
  \[
  \bigl( f' e^{w\cdot/2} \bigr)^{(n)} = \sum_{j=0}^n \binom{n}{j} (w/2)^{n-j}
  f^{(j+1)} e^{w\cdot/2} \qquad \forall n \in \{1,\ldots,m\}.
  \qedhere
  \]
\end{proof}

\begin{lemma}
  \label{lm:fif}
  Let $m \geq 1$ be an integer. If $f \in \dom(A_0^m)$ with
  $f(+\infty)=0$, then
  \[
  \norm[\big]{f\,If}_{\dom(A_0^m)} \lesssim
  \norm[\big]{f}^2_{\dom(A_0^m)},
  \]
  where the implicit constant depends only on $m$ and $w$.
\end{lemma}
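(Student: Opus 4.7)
The plan is to reduce the $\dom(A_0^m)$-norm to a sum of weighted $L^2_w$-norms of derivatives via Lemma~\ref{lm:abc}, apply the Leibniz rule to $f\,If$, and bound each resulting summand using the embeddings $H(\erre_+)\embed C_b(\erre_+)$ and $H_0(\erre_+)\embed L^1(\erre_+)$ together with the bookkeeping already available from Lemma~\ref{lm:abc}.

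First, set $g:=f\,If$. Since $f\in H_0(\erre_+)\embed L^1(\erre_+)$, the primitive $If$ belongs to $C_b(\erre_+)$ with $\norm{If}_\infty\le\norm{f}_{L^1}\lesssim\norm{f}_{H(\erre_+)}$; combined with $f(+\infty)=0$ this gives $g(+\infty)=0$, and the same reasoning applied to the formula below shows $g^{(k)}(+\infty)=0$ for $k=1,\ldots,m$, once one recalls the elementary fact from \S\ref{ssec:t} that $\phi\in H(\erre_+)$ implies $\phi'(+\infty)=0$. Hence by Lemma~\ref{lm:abc} it is enough to prove
\[
\sum_{k=1}^{m+1}\norm{g^{(k)}}_{L^2_w(\erre_+)}^2\lesssim\norm{f}_{\dom(A_0^m)}^4,
\]
with implicit constant depending only on $m$ and $w$.

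Second, apply the Leibniz rule, using $(If)^{(\ell)}=f^{(\ell-1)}$ for $\ell\ge 1$, to obtain, for $k=1,\ldots,m+1$,
\[
g^{(k)}=f^{(k)}\,If+\sum_{j=0}^{k-1}\binom{k}{j}f^{(j)}f^{(k-1-j)}.
\]
The first term is controlled by $\norm{f^{(k)}\,If}_{L^2_w}\le\norm{If}_\infty\norm{f^{(k)}}_{L^2_w}\lesssim\norm{f'}_{L^2_w}\norm{f^{(k)}}_{L^2_w}$, where $\norm{f^{(k)}}_{L^2_w}\lesssim\norm{f}_{\dom(A_0^m)}$ for every $k\le m+1$ by Lemma~\ref{lm:abc}(b). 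For the product terms, both indices $j$ and $k-1-j$ lie in $\{0,\ldots,k-1\}\subseteq\{0,\ldots,m\}$, so $f^{(j)},f^{(k-1-j)}\in H(\erre_+)\embed C_b(\erre_+)$; placing the larger index into $L^2_w$ and the smaller into $L^\infty$ yields
\[
\norm{f^{(j)}f^{(k-1-j)}}_{L^2_w}\le\norm{f^{(j)}}_\infty\norm{f^{(k-1-j)}}_{L^2_w}\lesssim\norm{f}_{\dom(A_0^m)}^2.
\]
Summing the finitely many terms gives the desired inequality.

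The only subtlety is the top-order case $k=m+1$ in the first summand: here $f^{(m+1)}$ need not belong to $H(\erre_+)$ and is not a priori bounded, but Lemma~\ref{lm:abc}(b) still provides $f^{(m+1)}\in L^2_w(\erre_+)$, which is exactly what the estimate above requires; for the bilinear product terms this case does not arise because both Leibniz indices stay $\le m$. This is the point at which the precise characterization of $\dom(A_0^m)$ in Lemma~\ref{lm:abc} is essential, and it is the only part of the argument that is not a direct application of Leibniz and Hölder.
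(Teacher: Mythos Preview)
Your argument follows the same Leibniz-plus-H\"older scheme as the paper's proof and is essentially correct, but there is one point you gloss over. You flag the top-order case $k=m+1$ as the only subtlety, yet there is also a bottom-order one: when $k=1$ the sole product term in your Leibniz expansion is $f\cdot f=f^2$, with both indices equal to zero. Your rule ``place the larger index into $L^2_w$'' then requires control of $\norm{f}_{L^2_w}$, but this quantity is not part of $\norm{f}_{\dom(A_0^m)}=\norm{f'}_{H^m_w}$, nor is it supplied by any of the three embeddings recorded in the paper ($H(\erre_+)\embed C_b$, $H_0(\erre_+)\embed L^1$, $H_0(\erre_+)\embed L^4_w$). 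The inequality $\norm{f}_{L^2_w}\lesssim\norm{f'}_{L^2_w}$ for $f\in H_0(\erre_+)$ does in fact hold --- one can derive it from a Hardy-type integration by parts --- but it has to be stated and justified.

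The paper sidesteps this issue by treating the term $f^2$ separately via
\[
\norm{f^2}_{L^2_w}=\norm{f}_{L^4_w}^2\lesssim\norm{f'}_{L^2_w}^2,
\]
using the listed embedding $H_0(\erre_+)\embed L^4_w$. Once you patch this single term (by either route), your proof is complete and coincides with the paper's.
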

\begin{proof}
  Let $f \in \dom(A_0^m)$ with $f(+\infty)=0$. In view of the previous
  lemma, we will bound the $H^m_w(\erre_+)$ norm of $(f\,If)' = f'\,If
  + f^2$ in terms of the $H^m_w(\erre_+)$ norm of $f'$.
  One has, omitting the indication of $\erre_+$ in the notation,
  \[
  \norm[\big]{f'\,If}_{L^2_w} \leq 
  \norm[\big]{f'}_{L^2_w} \norm[\big]{If}_{L^\infty} \leq
  \norm[\big]{f'}_{L^2_w} \norm[\big]{f}_{L^1} \lesssim
  \norm[\big]{f'}_{L^2_w} \norm[\big]{f'}_{L^2_w}
  \]
  and
  \[
  \norm[\big]{f^2}_{L^2_w} = \norm[\big]{f}^2_{L^4_w} \lesssim
  \norm[\big]{f'}^2_{L^2_w}.
  \]
  Let $1 \leq n \leq m$ be an integer. One has
  \[
  \bigl( f'\,If \bigr)^{(n)} = \sum_{j=0}^n \binom{n}{j}
  f^{(j+1)} (If)^{(n-j)}
  \]
  and
  \[
  \norm[\big]{f^{(j+1)} (If)^{(n-j)}}_{L^2_w} \leq
  \norm[\big]{f^{(j+1)}}_{L^2_w}
  \norm[\big]{(If)^{(n-j)}}_{L^\infty},
  \]
  where, if $j=n$,
  \[
  \norm[\big]{(If)^{(n-j)}}_{L^\infty} = \norm[\big]{If}_{L^\infty}  
  \leq \norm[\big]{f}_{L^1} \lesssim \norm[\big]{f'}_{L^2_w},
  \]
  while, if $j \leq n-1$,
  \[
  \norm[\big]{(If)^{(n-j)}}_{L^\infty} =
  \norm[\big]{f^{(n+1-j)}}_{L^\infty} \lesssim
  \norm[\big]{f^{(n-j)}}_{L^2_w}.
  \]
  Similarly,
  \[
  {(f^2)}^{(n)} = \sum_{j=0}^n \binom{n}{j} f^{(j)} f^{(n-j)}
  = 2f\,f^{(n)} + \sum_{j=1}^{n-1} \binom{n}{j} f^{(j)} f^{(n-j)}
  \]
  where
  \[
  \norm[\big]{f\,f^{(n)}}_{L^2_w} \leq
  \norm[\big]{f}_{L^\infty} \norm[\big]{f^{(n)}}_{L^2_w}
  \lesssim   \norm[\big]{f'}_{L^2_w} \norm[\big]{f^{(n)}}_{L^2_w}
  \]
  and, if $1 \leq j \leq n-1$,
  \[
  \norm[\big]{f^{(j)} f^{(n-j)}}_{L^2_w} \leq
  \norm[\big]{f^{(j)}}_{L^\infty} \norm[\big]{f^{(n-j)}}_{L^2_w}
  \lesssim \norm[\big]{f^{(j+1)}}_{L^2_w} \norm[\big]{f^{(n-j)}}_{L^2_w}.
  \]
  The claim is then an immediate consequence of these estimates.
\end{proof}

\begin{prop}
  \label{prop:boa}
  Let $p>0$ and $m$ a positive integer. If 
  \[
  \E \biggl( \sum_{k=1}^\infty \int_0^T
  \norm[\big]{\sigma_k(t,\cdot)}^2_{\dom(A_0^{2m})}\,dt \biggr)^p < \infty
  \]
  or, equivalently, $B_0 \in
  L^{2p}(\Omega;L^2(0,T;\cL^2(\ell^2;\dom(A_0^{2m}))))$, then
  $\alpha_0 \in L^p(\Omega;L^1(0,T;\dom(A_0^{2m})))$.
\end{prop}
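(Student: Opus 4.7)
The plan is to reduce the bound on $\alpha_0$ to the bound on $B_0$ by applying Lemma~\ref{lm:fif} (with $m$ replaced by $2m$) termwise to the series
\[
\alpha_0(t,x) = \sum_{k=1}^\infty \sigma_k(t,x) \int_0^x \sigma_k(t,y)\,dy = \sum_{k=1}^\infty \sigma_k(t)\,I\sigma_k(t).
\]
The condition $\sigma_k(t,+\infty)=0$ from \eqref{eq:sigma} places each $\sigma_k(t)$ in $H_0(\erre_+)$, so Lemma~\ref{lm:fif} is applicable as soon as $\sigma_k(t) \in \dom(A_0^{2m})$. The equivalence of the two versions of the hypothesis is just the computation of the Hilbert-Schmidt norm on the canonical basis $(e_k)$ of $\ell^2$, which gives the identity $\norm{B_0(t)}^2_{\cL^2(\ell^2;\dom(A_0^{2m}))} = \sum_k \norm{\sigma_k(t)}^2_{\dom(A_0^{2m})}$.

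With this setup, Lemma~\ref{lm:fif} applied termwise yields, pointwise in $(\omega,t)$,
\[
\norm[\big]{\sigma_k(t)\,I\sigma_k(t)}_{\dom(A_0^{2m})} \lesssim \norm[\big]{\sigma_k(t)}^2_{\dom(A_0^{2m})},
\]
with implicit constant depending only on $m$ and $w$. Summing over $k$, the triangle inequality in the Banach space $\dom(A_0^{2m})$ gives the pointwise bound
\[
\norm[\big]{\alpha_0(t)}_{\dom(A_0^{2m})} \leq \sum_{k=1}^\infty \norm[\big]{\sigma_k(t)\,I\sigma_k(t)}_{\dom(A_0^{2m})} \lesssim \norm[\big]{B_0(t)}^2_{\cL^2(\ell^2;\dom(A_0^{2m}))}.
\]
Integrating over $t \in [0,T]$ and raising to the $p$-th power in $\Omega$ then yields
\[
\norm[\big]{\alpha_0}_{L^p(\Omega;L^1(0,T;\dom(A_0^{2m})))} \lesssim \norm[\big]{B_0}^2_{L^{2p}(\Omega;L^2(0,T;\cL^2(\ell^2;\dom(A_0^{2m}))))} < \infty,
\]
which is the claim.

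The only point that deserves some care, and is essentially the one step that is not algebra, is justifying that the series defining $\alpha_0(t)$ actually converges in $\dom(A_0^{2m})$, not just pointwise in $x$. This is not a real obstacle: absolute summability in $\dom(A_0^{2m})$ follows from the termwise estimate above together with $\sum_k \norm{\sigma_k(t)}^2_{\dom(A_0^{2m})} < \infty$ for a.e.\ $(\omega,t)$, a property that holds because the $L^p(\Omega)$-norm of this quantity is finite by hypothesis. The identification of the Banach-space limit with the pointwise (in $x$) sum then follows from the continuous embeddings $\dom(A_0^{2m}) \embed H(\erre_+) \embed C_b(\erre_+)$ recalled just before Lemma~\ref{lm:abc}.
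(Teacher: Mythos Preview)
Your proof is correct and follows essentially the same route as the paper: apply Lemma~\ref{lm:fif} termwise (using $\sigma_k(t,+\infty)=0$), sum over $k$ to bound $\norm{\alpha_0(t)}_{\dom(A_0^{2m})}$ by $\norm{B_0(t)}^2_{\cL^2(\ell^2;\dom(A_0^{2m}))}$, then pass to $L^p(\Omega;L^1(0,T))$. Your additional paragraph justifying the convergence of the series in $\dom(A_0^{2m})$ is a point the paper leaves implicit.
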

\begin{proof}
  One has
  \[
  \norm[\big]{B_0(t)}^2_{\cL^2(\ell^2;\dom(A_0^{2m}))} = 
  \sum_{k=1}^\infty \norm[\big]{\sigma_k(t,\cdot)}^2_{\dom(A_0^{2m})}
  \]
  and, by the previous lemma,
  \begin{align*}
    \norm[\big]{\alpha_0(t)}_{\dom(A_0^{2m})} 
    &= \norm[\bigg]{\sum_{k=1}^\infty %
      \sigma_k(t,\cdot) \, I\sigma_k(t,\cdot)}_{\dom(A_0^{2m})}\\
    &\leq \sum_{k=1}^\infty 
    \norm[\big]{\sigma_k(t,\cdot) \, I\sigma_k(t,\cdot)}_{\dom(A_0^{2m})}\\
    &\lesssim \sum_{k=1}^\infty
    \norm[\big]{\sigma_k(t,\cdot)}^2_{\dom(A_0^{2m})} =
    \norm[\big]{B(t)}^2_{\cL^2(\ell^2;\dom(A_0^{2m}))},
  \end{align*}
  hence
  \begin{align*}
    \norm[\big]{\alpha_0}_{L^p(\Omega;L^1(0,T;\dom(A_0^{2m})))}
    &\lesssim
    \norm[\Big]{\norm[\big]{B_0}^2_{\cL^2(\ell^2;\dom(A_0^{2m}))}}_{L^p(\Omega;L^1(0,T))}\\
    &= \norm[\big]{B_0}_{L^{2p}(\Omega;L^2(0,T;\cL^2(\ell^2;\dom(A_0^{2m}))))}.
    \qedhere
  \end{align*}
\end{proof}

\begin{lemma}
  There exists a linear continuous extension operator
  $L\colon \dom(A_0^{2m}) \to \dom(A^{2m})$.
\end{lemma}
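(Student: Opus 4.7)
The plan is to reduce the problem to a classical Sobolev extension on the half-line, using the weighted change of function $\phi \mapsto \phi\,e^{w\cdot/2}$ provided by Lemma~\ref{lm:abc}, and then invert that change. Concretely, given $f \in \dom(A_0^{2m})$, I set $g := f'\,e^{w\cdot/2}$, which by Lemma~\ref{lm:abc} lies in $H^{2m}(\erre_+)$ with $\norm{g}_{H^{2m}(\erre_+)} \lesssim \norm{f}_{\dom(A_0^{2m})}$. Any classical linear continuous extension $E\colon H^{2m}(\erre_+) \to H^{2m}(\erre)$ will serve; for definiteness one may take the Lichtenstein--Hestenes reflection $(Eg)(x) = \sum_{j=1}^{2m+1} c_j\, g(-jx)$ for $x<0$, with coefficients $c_j$ determined by the Vandermonde system $\sum_j c_j(-j)^k = 1$ for $k=0,\ldots,2m$, which guarantees $C^{2m}$-matching at $0$ and $\norm{Eg}_{H^{2m}(\erre)} \lesssim \norm{g}_{H^{2m}(\erre_+)}$. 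Set $\tilde g := Eg \in H^{2m}(\erre)$.

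I then define
\[
(Lf)(x) := f(+\infty) - \int_x^{+\infty} \tilde g(y)\,e^{-wy/2}\,dy,
\qquad x \in \erre,
\]
which is well defined since, by Cauchy--Schwarz, the integral is bounded by $\norm{\tilde g}_{L^2(\erre)}\bigl(\int_x^\infty e^{-wy}\,dy\bigr)^{1/2}$. By construction $(Lf)'(x) = \tilde g(x)\,e^{-wx/2}$; on $\erre_+$ this equals $g(x)\,e^{-wx/2}=f'(x)$, and $(Lf)(+\infty) = f(+\infty)$, so $Lf$ coincides with $f$ on $\erre_+$. Linearity of $L$ is inherited from the linearity of $f \mapsto g$, of $E$, and of the integration step.

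It remains to show $Lf \in \dom(A^{2m})$ with $\norm{Lf}_{\dom(A^{2m})} \lesssim \norm{f}_{\dom(A_0^{2m})}$. Applying Leibniz to $(Lf)' = \tilde g\,e^{-w\cdot/2}$ expresses $((Lf)')^{(k)}$ as a linear combination of $\tilde g^{(j)}\,e^{-w\cdot/2}$ for $j \leq k$, whose $L^2_w(\erre)$-norms coincide with $\norm{\tilde g^{(j)}}_{L^2(\erre)}$; hence $(Lf)' \in H^{2m}_w(\erre)$. The evident analog of Lemma~\ref{lm:abc} on $\erre$ (whose proof transfers verbatim) then yields $Lf \in \dom(A^{2m})$ together with
\[
\norm{Lf}_{\dom(A^{2m})} \lesssim \abs{f(+\infty)} + \norm{\tilde g}_{H^{2m}(\erre)}
\lesssim \abs{f(+\infty)} + \norm{g}_{H^{2m}(\erre_+)}
\lesssim \norm{f}_{\dom(A_0^{2m})}.
\]
The only substantive input is the existence of a bounded Sobolev extension operator of order $2m$, which is classical; the remainder is bookkeeping to transfer between the intrinsic $\dom(A_0^{2m})$-norm and the ordinary $H^{2m}$-norm through the exponential weight, and presents no genuine analytic obstacle.
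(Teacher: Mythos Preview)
Your proof is correct and follows essentially the same route as the paper: reduce via the weighted change $f' \mapsto f' e^{w\cdot/2}$ to a classical Sobolev extension $H^{2m}(\erre_+) \to H^{2m}(\erre)$, then integrate back from $+\infty$ with the constant $f(+\infty)$. The only cosmetic difference is that the paper invokes Stein's extension theorem where you use the Lichtenstein--Hestenes reflection, which is an equally valid (and slightly more explicit) choice.
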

\begin{proof}
  By an extension result due to Stein (see \cite[p.~181]{Stein}),
  there exists a linear continuous extension operator $L_0 \colon
  H^{2m}(\erre_+) \to H^{2m}(\erre)$.  Since a locally integrable
  function $f$ belongs to $\dom(A_0^{2m})$ if and only if $x \mapsto
  f'(x)e^{wx/2} \in H^{2m}(\erre_+)$ by Lemma~\ref{lm:abc} and $f \in
  H(\erre_+)$ implies that $f(+\infty)$ is finite, the map
  \[
    L\colon f \longmapsto \Bigl[ x \mapsto f(+\infty) -
    \int_x^{+\infty} e^{-wy/2} L_0\bigl( f' e^{w\cdot/2}
    \bigr)(y)\,dy \Bigr]
  \]
  is well defined on $\dom(A_0^{2m})$. Moreover, $L_0\bigl( f'
  e^{w\cdot/2} \bigr) \in H^{2m}(\erre)$, hence $y \mapsto e^{-wy/2}
  L_0\bigl( f' e^{w\cdot/2} \bigr)(y) \in L^1(x,+\infty)$ by Cauchy's
  inequality for all $x \in \erre$, so that $Lf(x)$ is finite for
  every $x \in \erre$ and $Lf(+\infty):=\lim_{x\to+\infty} Lf(x) =
  f(+\infty)$. Moreover,
  \[
    x \mapsto e^{wx/2} (Lf)'(x) = L_0\bigl( f' e^{w\cdot/2}
    \bigr) \in H^{2m}(\erre),
  \]
  hence $Lf \in \dom(A^{2m})$ by an argument completely analogous to
  the proof of Lemma~\ref{lm:abc}. Finally,
  \begin{align*}
    \norm[\big]{Lf}_{\dom(A^{2m})}
    &\lesssim \abs{f(+\infty)}
      + \norm[\big]{(Lf)' e^{w\cdot/2}}_{H^{2m}(\erre)}\\
    &= \abs{f(+\infty)}
      + \norm[\big]{L_0(f' e^{w\cdot/2})}_{H^{2m}(\erre)}\\
    &\lesssim \abs{f(+\infty)} 
      + \norm[\big]{f' e^{w\cdot/2}}_{H^{2m}(\erre_+)}\\
    &\lesssim \norm[\big]{f}_{\dom(A_0^{2m})}.
    \qedhere
  \end{align*}
\end{proof}

\medskip

Let $L$ be the extension operator just introduced and set $v_0 :=
Lu_0$, $\alpha := L\alpha_0$, and $B := LB_0$, with $\alpha_0$ and
$B_0$ as in Proposition~\ref{prop:boa}, so that
\begin{gather*}
  v_0 \in L^p(\Omega,\cF_0;\dom(A^{2m})), \qquad
  \alpha \in L^p(\Omega;L^1(0,T;\dom(A^{2m}))),\\
  B \in L^p(\Omega;L^2(0,T;\cL^2(\ell^2;\dom(A^{2m})))),
\end{gather*}
and consider the following stochastic equation in $H(\erre)$:
\begin{equation}
  \label{eq:v}
  dv = Av\,dt + \alpha\,dt + B\,dW, \qquad v(0)=v_0, \quad t \geq 0,
\end{equation}
where $A$ is the generator of the semigroup of translation on
$H(\erre)$ and $W$ is a cylindrical Wiener process on $\ell^2$. By the
discussion at the beginning of this section, this equation admits a
unique mild solution $v \in \sC^p(\dom(A^{2m}))$, which is thus also a
strong solution, i.e. such that
\[
  v(t) = v_0 + \int_0^t Av(s)\,ds + \int_0^t \alpha(s)\,ds
  + \int_0^t B(s)\,dW(s),
\]
where the equality is in the sense of indistinguishable
$H(\erre)$-valued (hence also $C(\erre)$-valued) processes. In a more
explicit form, one has
\[
  v(t,x) = v_0(x) + \int_0^t \partial_x v(s,x)\,ds + \int_0^t \alpha(s,x)\,ds
  + \sum_{k=1}^\infty \int_0^t \sigma_k(s,x)\,dw^k(s)
\]
for every $x \in \erre$, in particular for every $x \in
\erre_+$. Since the restriction of $v_0$, $\alpha$ and $B$ to
$\erre_+$ are equal to $u_0$, $\alpha_0$ and $B_0$, respectively, the
restriction of $v$ to $\erre_+$ must coincide with the unique strong
solution in $H(\erre_+)$ to the Musiela equation \eqref{eq:musa}.

Moreover, the equation in $H(\erre)$
\begin{equation}
  \label{eq:ve}
  dv_\varepsilon = (A + \varepsilon A^2)v_\varepsilon\,dt + \alpha\,dt +
  B\,dW, \qquad v_\varepsilon(0)=v_0,
\end{equation}
also admits a unique mild solution $v_\varepsilon \in
\sC^p(\dom(A^{2m}))$, which converges to $v$ in $\sC^p(\dom(A^{2m}))$
as $\varepsilon \to 0$, and satisfies an identity of the type
\[
  v_\varepsilon - v = \sum_{k=1}^{m-1} \frac{v_k}{k!} \varepsilon^k +
  R_{m,\varepsilon}
\]
in $H(\erre)$, in particular in $C(\erre)$, where
$v_1,\ldots,v_{m-1},R_{m,\varepsilon} \in \sC^p(H(\erre))$ and
$R_{m,\varepsilon}/\varepsilon^{m-1}$ tends to zero as $\varepsilon
\to 0$. Taking the $H(\erre_+)$ norm on both sides yields
\[
  \norm[\big]{v_\varepsilon - v}_{H(\erre_+)}
  \leq \sum_{k=1}^{m-1} \frac{1}{k!} \varepsilon^k \norm[\big]{v_k}_{H(\erre_+)}
    + \norm[\big]{R_{m,\varepsilon}}_{H(\erre_+)},
\]
where all $H(\erre_+)$ norms involved are finite because they are
dominated by the corresponding ones in $H(\erre)$, that are finite.
We have thus proved the following.
\begin{thm}
  Let $p>0$ and $m \geq 1$ be a positive integer such that
  \[
  \E \biggl( \sum_{k=1}^\infty \int_0^T
  \norm[\big]{\sigma_k(t,\cdot)}^2_{\dom(A_0^{2m})}\,dt \biggr)^p < \infty.
  \]
  Then equation \eqref{eq:v} has a unique strong solution $v$ in
  $\sC^p(H(\erre))$ and its restriction to $H(\erre_+)$ coincides with
  the unique strong solution $u$ in $\sC^p(H(\erre_+))$ to the Musiela
  equation \eqref{eq:musa}. Moreover, the restriction to $H(\erre_+)$
  of the mild solution $v_\varepsilon$ to the perturbed extended
  Musiela equation \eqref{eq:ve} converges to $v$ in
  $\sC^p(\dom(A^{2m}))$ and the estimate
  \[
  \norm[\big]{v_\varepsilon - u}_{\sC^p(H(\erre_+))}
  \leq \sum_{k=1}^{m-1} \frac{1}{k!} \varepsilon^k \norm[\big]{v_k}_{\sC^p(H(\erre_+))}
    + \norm[\big]{R_{m,\varepsilon}}_{\sC^p(H(\erre_+))}
  \]
  holds, with $\lim_{\varepsilon \to 0}
  R_{m,\varepsilon}/\varepsilon^{m-1}=0$ in $\sC^p(H(\erre_+))$.
\end{thm}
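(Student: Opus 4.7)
The plan is to carry out the program already sketched in the discussion preceding the statement, tightening a few points. I would first apply the extension operator $L$ (combined with Proposition~\ref{prop:boa}) to lift the Musiela data $u_0$, $\alpha_0$, $B_0$ to $H(\erre)$-valued data $v_0$, $\alpha$, $B$ of the regularity recorded just above the theorem. With these data, equation \eqref{eq:v} falls squarely into the framework of \S\ref{ssec:t}: $A$ is maximal dissipative on $H(\erre)$, and by Proposition~\ref{prop:gai} $G:=A^{2}-(w^{2}/2)I$ is maximal dissipative as well; since $G$ is a polynomial in $A$, its semigroup commutes with $S_A$, so Lemma~\ref{lm:rc}, Theorem~\ref{thm:convsol}, Proposition~\ref{prop:ina} and the remainder estimates of \S\ref{sec:exp} are all applicable.

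Under this set-up, the standard existence theory (applied both on $H(\erre)$ and on the Hilbert space $\dom(A^{2m})$, on which $A$ again generates a strongly continuous contraction semigroup) yields a unique mild solution $v \in \sC^p(\dom(A^{2m}))$ to \eqref{eq:v} which, by the regularity of the data, is in fact a strong solution. Equation \eqref{eq:ve} likewise admits a unique mild solution $v_\varepsilon \in \sC^p(\dom(A^{2m}))$; Theorem~\ref{thm:convsol} gives $v_\varepsilon \to v$ in $\sC^p(\dom(A^{2m}))$, and Proposition~\ref{prop:ina} together with the subsequent remainder theorem of \S\ref{sec:exp} yields the decomposition $v_\varepsilon - v = \sum_{k=1}^{m-1}(\varepsilon^k/k!) v_k + R_{m,\varepsilon}$ with $v_1,\dots,v_{m-1},R_{m,\varepsilon} \in \sC^p(H(\erre))$ and $R_{m,\varepsilon}/\varepsilon^{m-1} \to 0$ in $\sC^p(H(\erre))$.

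The next step, which I regard as the only genuinely delicate point, is the identification $v|_{\erre_+} = u$. Because $v$ is a strong solution, the identity $v(t) = v_0 + \int_0^t Av(s)\,ds + \int_0^t \alpha(s)\,ds + \int_0^t B(s)\,dW(s)$ holds between $H(\erre)$-valued, hence via $H(\erre) \hookrightarrow C(\erre)$ also $C(\erre)$-valued, processes, so pointwise restriction to $\erre_+$ is well defined. By the construction of $L$ one has $v_0|_{\erre_+} = u_0$, $\alpha|_{\erre_+}=\alpha_0$, $B|_{\erre_+}=B_0$, and $(Av)|_{\erre_+} = A_0(v|_{\erre_+})$ since the restriction of $\partial_x$ from $H(\erre)$ to $\erre_+$ is precisely $A_0$ on its natural domain; hence the restricted process solves the Musiela equation \eqref{eq:musa} in strong form and, by uniqueness of solutions to the latter, equals $u$. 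The final estimate is then immediate: the inequality $\norm{f|_{\erre_+}}_{H(\erre_+)} \leq \norm{f}_{H(\erre)}$, clear from a term-by-term comparison of the defining inner products, together with the triangle inequality transfers the decomposition to $\sC^p(H(\erre_+))$ and preserves the asymptotics of the remainder. All of the quantitative work has been packaged into the abstract results of \S\ref{sec:exp} and the preparatory lemmas of this section, so the remainder of the argument is bookkeeping.
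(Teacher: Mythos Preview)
Your proposal is correct and follows essentially the same route as the paper: lift the data via the extension operator $L$, solve the transport equation and its perturbation on $H(\erre)$ using the machinery of \S\ref{sec:exp}, identify the restriction of the strong solution with $u$ by uniqueness for the Musiela equation, and then dominate all $H(\erre_+)$ norms by the corresponding $H(\erre)$ norms. The paper in fact presents the argument in the paragraphs immediately preceding the theorem and states the result as a summary; your write-up makes the role of commutativity of $S_A$ and $S_G$ and the restriction step somewhat more explicit, but there is no substantive difference.
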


\medskip

We shall now consider bond prices and their approximation in diffusive
correction of Musiela's equation. The solutions $v$ and
$v_\varepsilon$ to the equations \eqref{eq:v} and \eqref{eq:ve} have
paths in $H(\erre)$, hence their restrictions $x \mapsto v(t,x)$ and
$x \mapsto v_\varepsilon(t,x)$, $x \in \erre_+$, belong to
$H(\erre_+)$ for every $t \in [0,T]$ and $u(t,x)=v(t,x)$ for every
$(t,x) \in [0,T] \times \erre_+$.
The price of a zero-coupon bond with face value equal to one at time
$t \geq 0$ with time to maturity $x \geq 0$ is given by
\[
\widehat{B}(t,x) = \exp \biggl( -\int_t^{t+x} v(t,t+y)\,dy \biggr)
= \exp \biggl( -\int_0^x v(t,y)\,dy \biggr),
\]
and the value at time $t$ of the money market account is given by
\[
\beta(t) = \exp\biggl( \int_0^t v(s,0)\,ds \biggr),
\]
hence the corresponding discounted price of the zero-coupon bond is
\[
  B(t,x) := \frac{\widehat{B}(t,x)}{\beta(t)}
  = \exp \biggl( -\int_0^x v(t,y)\,dy - \int_0^t v(s,0)\,ds \biggr)
\]
Let us define the discounted price of the (fictitious) zero coupon
bond associated to $v_\varepsilon$ as
\[
  B_\varepsilon(t,x) = \exp \biggl( -\int_0^x v_\varepsilon(t,y)\,dy
  -\int_0^t v_\varepsilon(s,0)\,ds \biggr).
\]
For fixed $t \in [0,T]$ and $x \geq 0$, let us define the linear map
\begin{align*}
F_{t,x} \colon C([0,T] \times \erre) &\longrightarrow \erre\\
f &\longmapsto \int_0^x f(t,y)\,dy + \int_0^t f(s,0)\,ds
\end{align*}
so that $B(t,x) = \exp\bigl( -F_{t,x}u \bigr)$ and
$B_\varepsilon(t,x)=\exp\bigl( -F_{t,x}u_\varepsilon \bigr)$.
\begin{lemma}
  Let $(t,x) \in [0,T] \times \erre_+$. The linear map $F_{t,x}$ is
  continuous
  \begin{itemize}
  \item[\emph{(i)}] from $C([0,T];H(\erre_+))$ to $\erre$, hence also
    from $C([0,T];H(\erre))$ to $\erre$, and
  \item[(ii)] from $\sC^p(H(\erre_+))$ to $L^p(\Omega;\erre)$, hence
    also from $\sC^p(H(\erre))$ to $L^p(\Omega;\erre)$, for every
    $p>0$.
  \end{itemize}
\end{lemma}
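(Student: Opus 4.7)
The proof is essentially a pathwise pointwise bound combined with the basic embedding $H(\erre_+) \embed C_b(\erre_+)$ recalled earlier in the subsection. The plan is to treat both items simultaneously by first establishing a deterministic bound of the form $\abs{F_{t,x}(f)} \lesssim (t+x) \sup_{s \in [0,T]} \norm{f(s,\cdot)}_{H(\erre_+)}$ for every $f \in C([0,T];H(\erre_+))$, and then to specialize this estimate either to a fixed continuous path (yielding (i)) or to the random path $u_\varepsilon(\omega,\cdot,\cdot)$ (yielding (ii) after taking $L^p$-norms in $\omega$).

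First I would fix $f \in C([0,T]; H(\erre_+))$ and use the embedding $H(\erre_+) \embed C_b(\erre_+)$ to obtain, for every $s \in [0,T]$ and $y \in \erre_+$, the inequality $\abs{f(s,y)} \lesssim \norm{f(s,\cdot)}_{H(\erre_+)}$ with an implicit constant depending only on $w$. Since $s \mapsto \norm{f(s,\cdot)}_{H(\erre_+)}$ is continuous on the compact interval $[0,T]$, the two Riemann integrals defining $F_{t,x}(f)$ are well-defined and satisfy
\[
\abs{F_{t,x}(f)} \leq \int_0^x \abs{f(t,y)}\,dy + \int_0^t \abs{f(s,0)}\,ds
\lesssim (x+t)\,\norm{f}_{C([0,T];H(\erre_+))},
\]
which establishes continuity of $F_{t,x}\colon C([0,T];H(\erre_+)) \to \erre$ with operator norm bounded by a constant multiple of $t+x$.

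For the variants involving $H(\erre)$ in place of $H(\erre_+)$, I would simply note that the restriction map $H(\erre) \to H(\erre_+)$, $g \mapsto g|_{\erre_+}$, is linear and continuous (the $H(\erre_+)$ scalar product involves integration over $\erre_+ \subseteq \erre$ only, and $g(+\infty)$ is unchanged), so the result for $C([0,T];H(\erre))$ follows by composition with a bounded operator. For part (ii), I would apply the pointwise bound above $\P$-a.s.\ to $f=u \in \sC^p(H(\erre_+))$, giving $\abs{F_{t,x}(u(\omega))} \lesssim (x+t) \norm{u(\omega)}_{C([0,T];H(\erre_+))}$; taking $L^p(\Omega)$-norms of both sides yields $\norm{F_{t,x}(u)}_{L^p(\Omega)} \lesssim (x+t)\,\norm{u}_{\sC^p(H(\erre_+))}$, and the corresponding assertion on $\sC^p(H(\erre))$ follows again from the continuity of the restriction. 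There is no real obstacle here; the only thing to check carefully is that the $H(\erre_+)$-valued process $s \mapsto f(s,\cdot)$ is progressively measurable (or at least continuous) so that the Riemann integrals inside $F_{t,x}$ define a genuine random variable, which is immediate because $u \in \sC^p(H(\erre_+))$ has continuous paths and the point evaluations $g \mapsto g(y)$ are continuous on $H(\erre_+)$.
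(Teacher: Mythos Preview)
Your proof is correct and follows essentially the same approach as the paper: both arguments reduce the continuity of $F_{t,x}$ to a pointwise bound on $\abs{F_{t,x}(f)}$ in terms of $\norm{f}_{C([0,T];H(\erre_+))}$ via the embedding $H(\erre_+)\hookrightarrow C_b(\erre_+)$, and then pass to $L^p(\Omega)$ by taking expectations. The only cosmetic difference is that for the spatial integral $\int_0^x f(t,y)\,dy$ the paper first splits $f(t,\cdot)=\bigl(f(t,\cdot)-f(t,+\infty)\bigr)+f(t,+\infty)$ and invokes the embedding $H_0(\erre_+)\hookrightarrow L^1(\erre_+)$ on the first piece, obtaining a bound $\lesssim (1+x)\norm{f(t)}_{H(\erre_+)}$, whereas you use the $C_b$ embedding directly and get $\lesssim x\,\norm{f(t)}_{H(\erre_+)}$; since the integration domain $[0,x]$ is bounded, your shortcut is perfectly legitimate and slightly cleaner.
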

\begin{proof}
  For any $f \in C([0,T];H(\erre))$ one has
  \begin{align*}
    \abs[\bigg]{\int_0^x f(t,y)\,dy}
    &\leq \int_0^x \abs{f(t,y)-f(t,+\infty)}\,dy + \abs{f(t,+\infty)}x\\
    &\lesssim (1+x) \, {\norm{f(t)}}_{H(\erre_+)}
      \leq (1+x) \, {\norm{f}}_{C([0,T];H(\erre_+))}
  \end{align*}
  and
  \[
    \abs[\bigg]{\int_0^t f(s,0)\,ds} \leq \int_0^t
    {\norm{f(s)}}_{L^\infty(\erre_+)}\,ds
    \lesssim T {\norm{f}}_{C([0,T];H(\erre_+))},
  \]
  thus proving (i). Raising both sides of both inequalities to the
  power $p$ and taking expectations proves (ii).
\end{proof}

More generally, it is easy to show that the linear map $F$ defined as
\begin{align*}
  F \colon C([0,T] \times \erre)
  &\longrightarrow C([0,T] \times \erre)\\
  f &\longmapsto \Bigl[ (t,x) \mapsto \int_0^x f(t,y)\,dy
      + \int_0^t f(s,0)\,ds \Bigr]
\end{align*}
is continuous from $C([0,T];H(\erre_+))$ to $C([0,T] \times \erre)$,
endowed with the topology of uniform convergence on compact sets, as
well as from $\sC^p(H(\erre_+))$ to $L^p(\Omega;C([0,T] \times I))$
for every compact $I \subset \erre_+$.

The operators $F_{t,x}$ and $F$, being linear and continuous, are
automatically of class $C^\infty$ (assuming $p \geq 1$ when the norms
in the domain and codomain depend on such a parameter), with
$F'(z)=F(z)$ for every $z$ in the domain of $F$, and higher-order
derivatives equal to zero (and completely analogously for $F_{t,x}$).

\medskip

Given a series expansion of $v_\varepsilon$ around $v$ of the type
\[
v_\varepsilon - v = \sum_{k=1}^{m-1} \frac{1}{k!}
v_k \, \varepsilon^k + R_{m,\varepsilon},
\]
which can be considered as an identity in $\sC^p(H(\erre))$, as well
as in $\sC^p(H(\erre_+))$ by restriction, it follows immediately that
\begin{equation}
  \label{eq:fufu}
  F_{t,x}v_\varepsilon - F_{t,x}v = \sum_{k=1}^{m-1} \frac{1}{k!}
  F_{t,x}v_k \, \varepsilon^k + F_{t,x} R_{m,\varepsilon},
\end{equation}
as an identity in $L^p(\Omega)$. Similar considerations can be made
with $F$ in place of $F_{t,x}$.

An alternative way to reach the same conclusion is to look at the
composition of functions
\[
\varepsilon \longmapsto v_\varepsilon \longmapsto F_{t,x}v_\varepsilon,
\]
where $\varepsilon \mapsto v_\varepsilon$ is of class $C^{m-1}$ from
$\erre$ to $\sC^p$ and $F_{t,x}$ is of class $C^\infty$ from $\sC^p$
to $L^p(\Omega)$, so that $\varepsilon \mapsto F_{t,x}v_\varepsilon$
is of class $C^{m-1}$ from $\erre$ to $L^p(\Omega)$, and the series
expansion \eqref{eq:fufu} follows by Taylor's theorem.

To obtain a series expansion for the difference
$B_\varepsilon(t,x)-B(t,x)$ we need, however, to work pathwise,
i.e. in $L^0(\Omega)$, essentially because it seems difficult to find
a (reasonable) Banach space $E$ such that $x \mapsto e^{-x}$ is
Fr\'echet differentiable from $L^p(\Omega)$ to $E$, so that the chain
rule could be applied to obtain a differentiability result for the map
$\varepsilon \mapsto B_\varepsilon(t,x)$. We proceed instead as
follows: Taylor's theorem yields
\begin{align*}
e^{-x} &= 1 + \sum_{j=1}^{m-1} (-1)^j \frac{x^j}{j!} 
         + (-1)^m \int_0^1 \frac{(1-s)^{m-1}}{(m-1)!} e^{-sx} x^m\,ds\\
  & =: 1 + J_{m-1}(x) + r_m(x),
\end{align*}
hence
\begin{align*}
  B_\varepsilon(t,x) 
  &= \exp\bigl( -F_{t,x}v_\varepsilon \bigr)
    = \exp\bigl( -F_{t,x}v \bigr) \exp\bigl( -F_{t,x}(v_\varepsilon-v) \bigr)\\
  &= B(t,x)\Bigl(1 + J_{m-1}\bigl( F_{t,x}v_\varepsilon - F_{t,x}v \bigr)
    + r_m\bigl( F_{t,x}v_\varepsilon - F_{t,x}v \bigr) \Bigr),
\end{align*}
so that the relative pricing error can be written as
\begin{equation}
  \label{eq:BB}
  \frac{B_\varepsilon(t,x)-B(t,x)}{B(t,x)} = J_{m-1}\bigl(
  F_{t,x}v_\varepsilon - F_{t,x}v \bigr) + r_m\bigl(
  F_{t,x}v_\varepsilon - F_{t,x}v \bigr).
\end{equation}
Substituting the series expansion of $F_{t,x}v_\varepsilon - F_{t,x}v$
provided by \eqref{eq:fufu}, we obtain a series expansion of
$B_\varepsilon(t,x)-B(t,x)$ in $\varepsilon$ of order $m-1$ with a
rest of higher order, that has to be interpreted as an identity in
$L^0(\Omega)$.

Note that if $B_0 \in
L^{2mp}(\Omega;L^2(0,T;\cL^2(\ell^2;H(\erre_+))))$, then
$v_\varepsilon$ and $v_\varepsilon$ belong to $\sC^{mp}(H(\erre))$,
which implies that $v_1,\ldots,v_{m-1}$ and $R_{m,\varepsilon}$ in
\eqref{eq:fufu} belong to $\sC^{mp}(H(\erre))$, hence all powers of
$F_{t,x}v_\varepsilon - F_{t,x}v$ up to the exponent $m$ produce
series in $\varepsilon$ whose coefficients belong to $L^p(\Omega)$, by
H\"older's inequality. We can thus write the first term on the
right-hand side of \eqref{eq:BB} as a series of order $m-1$ with
coefficients in $L^p(\Omega)$, plus a remainder of higher
order. Estimating the second term on the right-hand side of
\eqref{eq:BB} requires further assumptions. Let $p'$ be the (H\"older)
conjugate exponent to $p$. We have
\begin{align*}
  &\norm[\big]{r_m\bigl( F_{t,x}v_\varepsilon - F_{t,x}v \bigr)}_{L^1(\Omega)}\\
  &\hspace{3em} \leq \int_0^1 \frac{(1-s)^{m-1}}{(m-1)!} \norm[\big]{%
    \exp\bigl( -s(F_{t,x}v_\varepsilon - F_{t,x}v) \bigr)}_{L^{p'}(\Omega)}
     \norm[\big]{(F_{t,x}v_\varepsilon - F_{t,x}v)^m}_{L^p(\Omega)} \,ds,
\end{align*}
where $(Fv_\varepsilon - Fv)^m$ is a series of order $m$ or higher
with coefficients in $L^p$, hence its $L^p(\Omega)$ norm is bounded.
Since, by H\"older's inequality,
\begin{align*}
  \norm[\big]{ \exp\bigl( -s(F_{t,x}v_\varepsilon - F_{t,x}v)
  \bigr)}_{L^{p'}(\Omega)}
  &= \Bigl( \E \exp\bigl( -p's(Fv_\varepsilon-Fv) \bigr) \Bigr)^{1/p'}\\
  &\leq \Bigl( \E \exp\bigl( -p'(Fv_\varepsilon-Fv) \bigr) \Bigr)^{1/p'}
    \qquad \forall s \in [0,1],
\end{align*}
it follows that if $\E\exp \bigl( -p'(Fu_\varepsilon-Fu) \bigr)$ is
bounded for $\varepsilon$ in a (right) neighborhood of zero, we have a
series expansion of the relative pricing error with coefficients in
$L^1(\Omega)$.
An alternative estimate can be obtained under a sign assumption,
starting from the expression
\begin{align*}
  &B(t,x) \, r_m\bigl( F_{t,x}v_\varepsilon - F_{t,x}v \bigr)\\
  &\hspace{3em} = \int_0^1 \frac{(1-s)^{m-1}}{(m-1)!} \exp\bigl(
    -sF_{t,x}v_\varepsilon - (1-s)F_{t,x}v \bigr) (F_{t,x}v_\varepsilon
    - F_{t,x}v)^m \,ds.
\end{align*}
If $v \geq 0$ and $v_\varepsilon \geq 0$, then
\[
  \exp\bigl( -sF_{t,x}v_\varepsilon - (1-s)F_{t,x}v \bigr) \leq 1
  \qquad \forall s \in [0,1]
\]
because $F_{t,x}$ is positivity preserving. This implies
\[
  \norm[\big]{B(t,x) \, r_m\bigl( F_{t,x}v_\varepsilon - F_{t,x}v
    \bigr)}_{L^p(\Omega)} \lesssim \norm[\big]{( F_{t,x}v_\varepsilon
    - F_{t,x}v)^m}_{L^p(\Omega)},
\]
where the right hand side converges to zero in $L^p(\Omega)$ faster
than $\varepsilon^{m-1}$. Conditions ensuring the positivity of mild
solutions to the Musiela SPDE are discussed, e.g., in
\cite{FTT,Kusu:Mus}, and in \cite{cm:pos1,cm:pos2} in a more general
context.


\bibliographystyle{amsplain}
\bibliography{ref,finanza}

\def\polhk#1{\setbox0=\hbox{#1}{\ooalign{\hidewidth
  \lower1.5ex\hbox{`}\hidewidth\crcr\unhbox0}}}
\providecommand{\bysame}{\leavevmode\hbox to3em{\hrulefill}\thinspace}
\providecommand{\MR}{\relax\ifhmode\unskip\space\fi MR }
\providecommand{\MRhref}[2]{%
  \href{http://www.ams.org/mathscinet-getitem?mr=#1}{#2}
}
\providecommand{\href}[2]{#2}
\begin{thebibliography}{10}

\bibitem{BaZa:MusLe}
M.~Barski and J.~Zabczyk, \emph{Heath-{J}arrow-{M}orton-{M}usiela equation with
  {L}\'{e}vy perturbation}, J. Differential Equations \textbf{253} (2012),
  no.~9, 2657--2697. \MR{2959384}

\bibitem{Bog:TVS}
V.~I. Bogachev and O.~G. Smolyanov, \emph{Topological vector spaces and their
  applications}, Springer, Cham, 2017. \MR{3616849}

\bibitem{BrzKok}
Z.~Brze\'{z}niak and T.~Kok, \emph{Stochastic evolution equations in {B}anach
  spaces and applications to the {H}eath-{J}arrow-{M}orton-{M}usiela
  equations}, Finance Stoch. \textbf{22} (2018), no.~4, 959--1006. \MR{3860612}

\bibitem{BuBe}
P.~L. Butzer and H.~Berens, \emph{Semi-groups of operators and approximation},
  Springer-Verlag New York Inc., New York, 1967. \MR{0230022}

\bibitem{CarTeh}
R.~A. Carmona and M.~R. Tehranchi, \emph{Interest rate models: an infinite
  dimensional stochastic analysis perspective}, Springer-Verlag, Berlin, 2006.
  \MR{2235463 (2008a:91001)}

\bibitem{Cont:TSIR}
R.~Cont, \emph{Modeling term structure dynamics: an infinite dimensional
  approach}, Int. J. Theor. Appl. Finance \textbf{8} (2005), no.~3, 357--380.
  \MR{2144706}

\bibitem{DZ92}
G.~Da~Prato and J.~Zabczyk, \emph{Stochastic equations in infinite dimensions},
  Cambridge University Press, Cambridge, 1992. \MR{MR1207136 (95g:60073)}

\bibitem{EnNa}
K.-J. Engel and R.~Nagel, \emph{One-parameter semigroups for linear evolution
  equations}, Springer-Verlag, New York, 2000. \MR{MR1721989 (2000i:47075)}

\bibitem{filipo}
D.~Filipovi{\'c}, \emph{Consistency problems for {H}eath-{J}arrow-{M}orton
  interest rate models}, Lecture Notes in Mathematics, vol. 1760,
  Springer-Verlag, Berlin, 2001. \MR{MR1828523 (2002e:91001)}

\bibitem{FTT}
D.~Filipovi\'c, S.~Tappe, and J.~Teichmann, \emph{Term structure models driven
  by {W}iener processes and {P}oisson measures: existence and positivity}, SIAM
  J. Financial Math. \textbf{1} (2010), no.~1, 523--554. \MR{2669403}

\bibitem{HJM}
D.~C. Heath, R.~A. Jarrow, and A.~Morton, \emph{Bond pricing and the term
  structure of interest rates: a new methodology for contingent claims
  valuation}, Econometrica \textbf{60} (1992), no.~1, 77--105.

\bibitem{Kato}
T.~Kato, \emph{Perturbation theory for linear operators}, Springer-Verlag,
  Berlin, 1995, Reprint of the 1980 edition. \MR{1335452}

\bibitem{KvN2}
M.~Kunze and J.~van Neerven, \emph{Continuous dependence on the coefficients
  and global existence for stochastic reaction diffusion equations}, J.
  Differential Equations \textbf{253} (2012), no.~3, 1036--1068. \MR{2922662}

\bibitem{Kusu:Mus}
Sh. Kusuoka, \emph{Term structure and {SPDE}}, Advances in mathematical
  economics, {V}ol. 2, Adv. Math. Econ., vol.~2, Springer, Tokyo, 2000,
  pp.~67--85. \MR{1764777}

\bibitem{Lang:RFA}
S.~Lang, \emph{Real and functional analysis}, third ed., Springer-Verlag, New
  York, 1993. \MR{1216137}

\bibitem{cm:pos1}
C.~Marinelli, \emph{Positivity of mild solution to stochastic evolution
  equations with an application to forward rates}, arXiv:1912:12472.

\bibitem{cm:MF10}
\bysame, \emph{Local well-posedness of {M}usiela's {SPDE} with {L}\'evy noise},
  Math. Finance \textbf{20} (2010), no.~3, 341--363. \MR{2667893}

\bibitem{cm:SIMA18}
\bysame, \emph{On well-posedness of semilinear stochastic evolution equations
  on {$L_p$} spaces}, SIAM J. Math. Anal. \textbf{50} (2018), no.~2,
  2111--2143. \MR{3784905}

\bibitem{cm:Expo16}
C.~Marinelli and M.~R{\"o}ckner, \emph{On the maximal inequalities of
  {B}urkholder, {D}avis and {G}undy}, Expo. Math. \textbf{34} (2016), no.~1,
  1--26. \MR{3463679}

\bibitem{cm:pos2}
C.~Marinelli and L.~Scarpa, \emph{On the positivity of local mild solutions to
  stochastic evolution equations}, arXiv:1912:13259.

\bibitem{cm:JFA13}
C.~Marinelli, G.~Ziglio, and L.~Di~Persio, \emph{Approximation and convergence
  of solutions to semilinear stochastic evolution equations with jumps}, J.
  Funct. Anal. \textbf{264} (2013), no.~12, 2784--2816. \MR{3045642}

\bibitem{musiela}
M.~Musiela, \emph{Stochastic {PDE}s and term structure models}, Journ{\'e}es
  Internationales de la Finance, La Baule, 1993.

\bibitem{MusRut}
M.~Musiela and M.~Rutkowski, \emph{Martingale methods in financial modelling},
  second ed., Springer-Verlag, Berlin, 2005. \MR{2107822 (2005m:91004)}

\bibitem{Oka:sing}
N.~Okazawa, \emph{Singular perturbations of {$m$}-accretive operators}, J.
  Math. Soc. Japan \textbf{32} (1980), no.~1, 19--44. \MR{554513}

\bibitem{PZ-libro}
Sz. Peszat and J.~Zabczyk, \emph{Stochastic partial differential equations with
  {L}\'evy noise}, Cambridge University Press, Cambridge, 2007. \MR{MR2356959}

\bibitem{Stein}
E.~M. Stein, \emph{Singular integrals and differentiability properties of
  functions}, Princeton University Press, Princeton, N.J., 1970. \MR{0290095
  (44 \#7280)}

\bibitem{tehranchi}
M.~Tehranchi, \emph{A note on invariant measures for {HJM} models}, Finance
  Stoch. \textbf{9} (2005), no.~3, 389--398. \MR{MR2211714}

\bibitem{vargiolu}
T.~Vargiolu, \emph{Invariant measures for the {M}usiela equation with
  deterministic diffusion term}, Finance Stoch. \textbf{3} (1999), no.~4,
  483--492. \MR{2002i:60080}

\end{thebibliography}

\end{document}